\numberwithin{equation}{section}
\DeclarePairedDelimiter\abs\lvert\rvert
\newtheorem{theorem}{Theorem}[section]
\newtheorem{lemma}[theorem]{Lemma}
\newtheorem{proposition}[theorem]{Proposition}
\theoremstyle{definition}
\newtheorem{definition}[theorem]{Definition}
\newtheorem{assumption}[theorem]{Assumption}
\theoremstyle{remark}
\newtheorem{remark}[theorem]{Remark}
\newcommand{\eqdef}{\stackrel{\mbox{\tiny\rm def}}{=}}
\newcommand{\eqlaw}{\stackrel{\mbox{\tiny\rm law}}{=}}
\def\dash{\leavevmode\unskip\kern0.18em--\penalty\exhyphenpenalty\kern0.18em}
\def\slash{\leavevmode\unskip\kern0.15em/\penalty\exhyphenpenalty\kern0.15em}
\colorlet{darkblue}{blue!90!black}
\colorlet{darkred}{red!80!black}
\colorlet{darkgreen}{green!50!black}
\definecolor{LB}{rgb}{0.29, 0.63, 0.73}
\crefname{example}{Example}{Examples}
\Crefname{example}{Example}{Examples}
\crefname{assumption}{Assumption}{Assumptions}
\Crefname{assumption}{Assumption}{Assumptions}
\crefname{condition}{Condition}{Conditions}
\Crefname{condition}{Condition}{Conditions}
\setlist{topsep=1ex, itemsep=0.5ex, before={\setlist{topsep=-.5ex}}}
\DeclareMathOperator{\supp}{supp}
\newcommand{\B}{\ensuremath{\mathcal{B}}}
\newcommand{\C}{\ensuremath{\mathcal{C}}}
\newcommand{\D}{\ensuremath{\mathcal{D}}}
\newcommand{\F}{\ensuremath{\mathcal{F}}}
\newcommand{\N}{\ensuremath{\mathbb{N}}}
\newcommand{\R}{\ensuremath{\mathbb{R}}}
\newcommand{\1}{\ensuremath{\mathbf{1}}
}
\def\ff#1#2{\textstyle{\frac{#1}{#2}}}
\let\f\frac
\def\epsilon{\varepsilon}
\def\le{\leq}
\def\E{\mathbb E}
\newcommand{\norm}[1]{\ensuremath\left\|#1\right\|}
\renewcommand{\geq}{\geqslant}
\renewcommand{\leq}{\leqslant}
\def\CC{{\mathcal C}}
\def\${|\!|\!|}
\def\B{{\mathcal B}}
\def\F{{\mathcal F}}
\def\<{\langle}
\def\>{\rangle}
\def\CT{\mathcal{T}}
\def\CR{\mathcal{R}}
\def\dom{\mathrm{Dom}}
\def\cov{\mathrm {cov}}
\newcommand{\vertiii}[1]{{\left\vert\kern-0.25ex\left\vert\kern-0.25ex\left\vert #1 \right\vert\kern-0.25ex\right\vert\kern-0.25ex\right\vert}}
\newcommand{\rom}[1]{(\textup{\uppercase\expandafter{\romannumeral#1}})}
\definecolor{LB}{rgb}{0.29, 0.63, 0.73}
\definecolor{dg}{RGB}{85,168,104}
\newcommand{\Cc}{\mathcal{C}_c} %Denote smooth compactly supporte
\newcommand{\Ccinf}{\mathcal{C}_c^\infty} %Denote smooth compactly supporte
\newcommand{\cH}{\mathcal{H}}% Hilbert space for Malliavin
\newcommand{\bD}{\mathbb{D}} % Malliavin space
\newcommand{\cS}{\mathcal{S}} % Class of smooth random variables
\newcommand{\cU}{\mathcal{U}} % EW solution
\newcommand{\fr}{\Phi}
\newcommand{\gfr}{\Psi}
\def\scal#1{\langle #1\rangle}
\newcommand{\cI}{\mathcal{I}} % side terms
\newcommand{\rhosy}{\rho_{s,y}} 
\newcommand{\chiB}{\underline{\chi}^\epsilon_{\tau}}
\newcommand{\chiU}{\overline{\chi}^\epsilon_{\tau}}
\def\mfK{\mathfrak{K}}
\let\eps\varepsilon
\let\d\partial
\DeclareRobustCommand{\TitleEquation}[2]{\texorpdfstring{\StrLeft{\f@series}{1}[\@firstchar]$\if%
b\@firstchar\boldsymbol{#1}\else#1\fi$}{#2}}
\DeclareRobustCommand{\cev}[1]{%
  {\mathpalette\do@cev{#1}}%
}
\newcommand{\do@cev}[2]{%
  \vbox{\offinterlineskip
    \sbox\z@{$\m@th#1 x$}%
    \ialign{##\cr
      \hidewidth\reflectbox{$\m@th#1\vec{}\mkern7mu$}\hidewidth\cr
      \noalign{\kern-\ht\z@}
      $\m@th#1#2$\cr
    }%
  }%
}
\begin{document}
\title{Scaling limit of the KPZ equation with non-integrable spatial correlations
}
\author{
	Luca Gerolla$^1$, 
    Martin Hairer$^{1,2}$,  Xue-Mei Li$^{1,2}$
}

\institute{
Imperial College London, SW7 2AZ London, UK.
\and
EPFL, 1015 Lausanne, Switzerland.\\
\email{luca.gerolla16@imperial.ac.uk, martin.hairer@epfl.ch, xue-mei.li@epfl.ch}}

\maketitle
\begin{abstract}
We study the large scale fluctuations of the KPZ equation in dimensions $d \geq 3$ driven by Gaussian noise that is 
white in time Gaussian but features non-integrable spatial correlation with decay rate $\kappa \in (2, d)$ and a suitable limiting profile. We show that its scaling limit is described by the corresponding additive stochastic heat equation. 
In contrast to the case of compactly supported covariance,  the noise in the stochastic heat equation retains spatial correlation with covariance $|x|^{-\kappa}$. 
Surprisingly, the noise driving the limiting equation turns out to be the scaling limit 
of the noise driving the KPZ equation so that, under a suitable coupling, one has convergence in probability,
unlike in the case of integrable correlations where fluctuations are 
enhanced in the limit and convergence is necessarily weak.

\vspace{1em}

\noindent{\it Mathematics Subject Classification:} 60H15, 60F17

\noindent {\it Keywords:} KPZ equation, stochastic heat equation, fluctuations,  long range correlations
\end{abstract}

%{\hypersetup{hidelinks}
\setcounter{tocdepth}{2}
\tableofcontents
%}
\pagestyle{plain}

\section{Introduction}
\label{sec:intro}

In this article, we investigate the large-scale dynamics of  the Kardar--Parisi--Zhang (KPZ) equation on $\R^d$ in dimension $d \geq 3$. The KPZ equation 
\begin{equ}\label{eq:KPZ}
	\partial_t h = \tfrac{1}{2} \Delta h +
	\tfrac{1}{2} |\nabla h |^2 
	+ \beta \xi\;, %\quad \quad h(0, x) = 0,
\end{equ}
was originally introduced in \cite{kardar1986dynamic} to model random interface growth. Here, $\beta$ is a coupling constant, and $\xi$ is a mean zero Gaussian noise. Over time, it has evolved into one of the most extensively studied stochastic PDEs due to its ability to capture the universal behaviour of many probabilistic models on one hand and the mathematical challenges it presents for a well-posed solution theory on the other.
Very recently, significant achievements have been made, demonstrating that, in dimension $1$, the rescaled solution 
$\epsilon^{\frac 12} h({\epsilon^{-\frac 32}}t,  \epsilon^{-1} x ) - C_\eps t$ converges to the KPZ fixed point \cite{quastel2020convergence, virag2020heat}. This accomplishment built upon the remarkable progress made in the last couple of decades \cite{Bertini-Giacomin-97, baik1999distribution, amir_corwin_quastel_2011probability, Balazs-Quastel-Seppalainen2011,alberts_khanin_quastel2014intermediate, matetski_quastel_remenik2016-21kpz}.
While the KPZ equation offers insights into universal phenomena, it poses substantial mathematical challenges when it comes to defining a robust and mathematically rigorous solution that exhibits the aforementioned physical relevance
 \cite{hairer_solvingKPZ, hairer_RegStr, gubinelli2015paracontrolled,gubinelli2017kpz,kupiainen2016renormalization}.

In the original model, the mean zero Gaussian noise  $\xi$ has short range correlations.
In that case, it was shown recently in a number of works 
\cite{ magnen2018scaling, gu18edwards, cosco2019spacetime_fluct, dunlap2020_KPZ_fluctuations, cosco2020law, lygkonis2020edwards} that the large-scale behaviour of the KPZ equation in dimensions $d \geq 3$ in the weak coupling regime
is described by the Edwards--Wilkinson model, namely simply the additive stochastic heat equation driven by space-time white noise. 

In the present article, we consider instead spatially coloured noise with long range correlations, and we are interested to explore how these correlations affect the KPZ scaling limit in the supercritical regime $d \geq 3$. 
In such settings, where the correlation is not integrable, so that the form of the effective variance in the above references is infinite, an effective fluctuation theory for the stochastic heat equation (SHE) has been developed \cite{GHL23_arxiv_preprint}. Like there, we consider large scales correlations of polynomial decay $|x|^{-\kappa}$. Our result on the KPZ scaling limit introduced below aligns with  
 the physics literature \cite{KatzavSchwartz99, PhyKPZ14} which predicts a Gaussian limit when $\kappa >2$. 

We begin by describing the noise $\xi$ considered:
%whose correlation is assumed to be of the form $\E[ \xi(t,x)\xi(s,y)] = \delta(t-s) R(x-y)$, where $R: \R^d\to \R_+$ satisfies the following assumption:
\begin{assumption}
\label{assump-cov-KPZ} 
The Gaussian noise $\xi$ is white in time with smooth spatial covariance $R$,  formally
$\E[ \xi(t,x)\xi(s,y)] = \delta(t-s) R(x-y)$. There exists $\kappa \in (2,d)$ such that
$R(x) \lesssim (1 + |x|)^{-\kappa}$
and, for any $x \in \R^d \setminus \{0\}$, 
$\lim_{\eps \to 0}\epsilon^{-\kappa} R( \epsilon^{-1}x ) = |x|^{-\kappa}$.
\end{assumption}
For example, the noise obtained by convolving (in space) a space-time white noise with a smooth function $\phi :\R^d\to \R_+$ behaving at infinity like $\phi(x) \sim |x|^{-\frac{d+\kappa} {2}}$ leads to $R(x) = \int_{\R^d} \phi(x+y) \phi(y) dy$, which satisfies Assumption~\ref{assump-cov-KPZ}.

Writing $h$ for the solution to \eqref{eq:KPZ}, but with a slowly varying initial condition of the form
$\eps^{\f\kappa2-1}h_0(\eps x)$, we rescale it by setting, for $\eps \in (0,1]$,
\begin{equ}
	h_\eps(t,x) = \eps^{1-\f\kappa2}h(t/\eps^2,x/\eps)-\tfrac t2 \beta^2 R(0) \epsilon^{-1-\f{\kappa}{2}}\;.
\end{equ}
A simple calculation shows that $h_\eps$ then solves the rescaled KPZ equation
\begin{equ}[eq:KPZ_rescaled]
\partial_t h_\eps = \tfrac{1}{2} \Delta h_\eps +
	\tfrac{1}{2} \eps^{\f\kappa2-1} |\nabla h_\eps |^2 - \tfrac12 \beta^2 R(0) \epsilon^{-1-\f{\kappa}{2}}
	+ \beta \xi^\eps\;,
\end{equ}
with initial condition $h_0$.
Here $\xi^\epsilon$ is the rescaling of $\xi$ such that  
\begin{equ} \label{eq:xi_eps_cov}
	\E [ \xi^\epsilon (t,x) \xi^\epsilon (s,y) ] = \delta(t-s) \eps^{-\kappa} R(\tfrac {x-y} \epsilon)\;.
\end{equ}
Under this scaling, $\xi^\eps$ converges in law,  as $\eps \to 0$,   to a non-trivial Gaussian limit $\xi^0$
 with covariance
\begin{equ} \label{eq:xi0_covariance}
	\E [\xi^0(t,x) \xi^0(s,y)]= \delta(t-s) |x-y|^{-\kappa}\;.
\end{equ}
It will be convenient to upgrade this convergence to convergence in probability (which can always be achieved
by Skorokhod's representation theorem):

\begin{definition}
A coupling between the $\xi^\eps$ is \textit{good} if, for every $\psi \in \CC_c^\infty(\R^{d+1})$, 
there exists a random variable $\xi^0(\psi)$ such that $\xi^\eps(\psi) \Rightarrow \xi^0(\psi)$ in probability.
\end{definition}

If one can write
$\xi(t,x) = \int_{\R^d} \phi(x-y)\eta(t,y)\,dy$,
for $\eta$ a space-time white noise and for $\phi$ as described just above, 
then a natural good coupling is given by
\begin{equ}\label{good-coupling}
\xi^\eps(t,x) = \eps^{-(d+\kappa)/2} \int_{\R^d} \phi((x-y)/\eps)\eta(t,y)\,dy\;.
\end{equ}

Given the limiting noise $\xi^0$, we write
 $\cU$ for the solution to the corresponding additive stochastic heat equation:
	\begin{equ}[eq:AddSHE_h0]
		\partial_t \cU =\ff 12 \Delta \cU  + \beta \xi^0\;, \qquad \cU(0, \cdot) = h_0\;.
	\end{equ}
Our main result is then the following.

\begin{theorem} \label{thm:main-theorem-KPZ}
Let $d\geq 3$ and let $\xi$ satisfy Assumption~\ref{assump-cov-KPZ}. 
For any $p\geq 1$, $\alpha <  1 - \f \kappa 2$, and $\sigma <  - 1 - \f \kappa 2$,  there exists a constant $c<0$ and a  positive value $\beta_0 = \beta_0(p, \alpha) $  such that for any $\beta<\beta_0$, for any deterministic
initial condition $h_0 \in \CC_b(\R^d)$, and any good coupling of the noise,
one has, for any $T > T_0 > 0$ 
\begin{equ}
	\lim_{\eps \to 0} \bigl(h_\eps(t,x) - \eps^{1-\f\kappa2}c\bigr) = \cU\;, 
\end{equ}
in probability in $L^p([T_0, T], \C^{\alpha}(E))$, for any compact $E \subset \R^d$.
\end{theorem}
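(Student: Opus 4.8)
The plan is to go through the Cole--Hopf transform. Set $\lambda_\eps:=\eps^{\f\kappa2-1}$, which tends to $0$ since $\kappa>2$, and $Z_\eps:=\exp(\lambda_\eps h_\eps)$; since $R$ is smooth, $\xi^\eps$ is smooth in space, so \eqref{eq:KPZ_rescaled} is a classical SPDE and the change of variables is rigorous. A short Itô computation shows that the renormalisation constant $\f12\beta^2 R(0)\eps^{-1-\f\kappa2}$ in \eqref{eq:KPZ_rescaled} is exactly the one for which the Itô correction $\f12\lambda_\eps^2\beta^2R^\eps(0)=\f12\beta^2R(0)\eps^{-2}$ produced by the exponential cancels it, where $R^\eps(x):=\eps^{-\kappa}R(x/\eps)$ is the spatial covariance of $\xi^\eps$; thus $Z_\eps$ solves the \emph{Itô} multiplicative stochastic heat equation $\partial_t Z_\eps=\f12\Delta Z_\eps+\beta\lambda_\eps Z_\eps\xi^\eps$ with $Z_\eps(0,\cdot)=\exp(\lambda_\eps h_0)$. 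This is linear and globally well posed with $Z_\eps>0$, so $h_\eps$ is globally well posed. The first genuine task is a set of $\eps$-uniform bounds in the weak-coupling regime: for $\beta<\beta_0$ and $t\in[T_0,T]$, uniform $L^p$ bounds on $Z_\eps(t,x)$ and on its spatial increments, and a two-point decay $\cov(Z_\eps(t,x),Z_\eps(t,y))\lesssim(\eps/(1\wedge|x-y|))^{d-2}$. These come from the Feynman--Kac representation of the linear equation: in $d\geq3$ two independent Brownian motions are transient, $\lambda_\eps^2\int_0^tR^\eps(B_s-B'_s)\,ds=\int_0^{t/\eps^2}R(\tilde B_u)\,du$ converges in law to the a.s.-finite variable $\int_0^\infty R(\tilde B_u)\,du$, and its exponential moments are finite for $\beta$ small --- the origin of the constraint $\beta<\beta_0$ (depending on $p$ and $\alpha$). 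This part runs parallel to the stochastic heat equation fluctuation theory of \cite{GHL23_arxiv_preprint}.

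Next I would linearise. Put $u_\eps:=(Z_\eps-1)/\lambda_\eps$, which solves $\partial_t u_\eps=\f12\Delta u_\eps+\beta\xi^\eps+\beta\lambda_\eps u_\eps\xi^\eps$ with $u_\eps(0,\cdot)=(\exp(\lambda_\eps h_0)-1)/\lambda_\eps\to h_0$ uniformly, and split $u_\eps=\cU_\eps+r_\eps$ where $\cU_\eps$ solves the \emph{additive} equation $\partial_t\cU_\eps=\f12\Delta\cU_\eps+\beta\xi^\eps$ with datum $h_0$. The convergence $\cU_\eps\to\cU$ in probability in $L^p([T_0,T],\C^\alpha(E))$ --- with $\cU$ as in \eqref{eq:AddSHE_h0} --- is the linear part: from the good coupling $\xi^\eps(\psi)\Rightarrow\xi^0(\psi)$ in probability for every $\psi\in\CC_c^\infty(\R^{d+1})$, linearity and continuity of the stochastic convolution, and a Kolmogorov argument using the heat-kernel/covariance bounds (which pin the regularity at $1-\f\kappa2$, as in \cite{GHL23_arxiv_preprint}), one gets tightness in $\C^\alpha$ and identifies the limit as the stochastic convolution of $\xi^0$, i.e.\ $\cU$. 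The remainder $r_\eps$ solves $\partial_t r_\eps=\f12\Delta r_\eps+\beta\lambda_\eps\xi^\eps r_\eps+\beta\lambda_\eps\xi^\eps\cU_\eps$ with $r_\eps(0,\cdot)=0$; by Duhamel and a Gr\"onwall estimate in $L^p(\C^\alpha)$, whose multiplicative-noise contraction factor is controlled uniformly in $\eps$ by the moment bounds for $\beta$ small, $\|r_\eps\|_{L^p(\C^\alpha)}$ is dominated by the chaos-$2$ forcing $\beta\lambda_\eps\int_0^\cdot p_{\cdot-s}*(\cU_\eps(s)\xi^\eps(s))\,ds$ (with $p$ the heat kernel). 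There is no soft argument here, since the limiting product $\cU\cdot\xi^0$ is ill-defined (the regularities sum to $-\kappa<0$): one computes that, tested at scale $\delta$, this object has variance $\lesssim\lambda_\eps^2\eps^{2(2-\kappa)}$, so after the $\delta^{-2\alpha}$ weight the worst case (at $\delta\sim\eps$) is $O(\eps^{2-\kappa-2\alpha})\to0$ precisely because $\alpha<1-\f\kappa2$. Hence $u_\eps\to\cU$ in $L^p([T_0,T],\C^\alpha(E))$ in probability.

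It remains to restore the logarithm. With $G(z):=z-1-\log z\geq0$ (well defined for $z>0$, and $Z_\eps>0$) one has the exact identity $h_\eps=\lambda_\eps^{-1}\log Z_\eps=u_\eps-\lambda_\eps^{-1}G(Z_\eps)$, so $h_\eps-\lambda_\eps^{-1}c-\cU=(u_\eps-\cU)-\lambda_\eps^{-1}(G(Z_\eps)+c)$. Define $c:=\lim_{\eps\to0}\E[\log Z^{(0)}_\eps(1,0)]$, with $Z^{(0)}_\eps$ the solution with flat datum; equivalently $-c=\lim_{\eps\to0}\E[G(Z^{(0)}_\eps(1,0))]$ since $\E[Z^{(0)}_\eps(1,0)]=1$. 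This limit exists, is independent of the positive time at which one evaluates, and $c<0$ by strict Jensen since $Z^{(0)}_\eps(1,0)$ has a non-degenerate limit in law. Two points remain. First, $\lambda_\eps^{-1}\bigl(G(Z_\eps(t,\cdot))-\E[G(Z_\eps(t,\cdot))]\bigr)\to0$ in $L^p(\C^\alpha)$: pointwise this centred field is $O(\beta^2)$ and, by the two-point decay, has covariance $\lesssim\beta^4(\eps/(1\wedge|x-y|))^{d-2}$, so tested at scale $\delta$ and weighted by $\delta^{-2\alpha}$ and $\lambda_\eps^{-1}$ the worst case (again $\delta\sim\eps$) is $O(\eps^{1-\f\kappa2-\alpha})\to0$. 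Second --- and this is the delicate point --- the deterministic expansion $\E[\log Z_\eps(t,x)]=c+\lambda_\eps(P_t h_0)(x)+o(\lambda_\eps)$ (with $P_t$ the heat semigroup) must hold uniformly in $t\in[T_0,T]$ and $x\in E$; since $\E[Z_\eps(t,x)]=1+\lambda_\eps(P_t h_0)(x)+O(\lambda_\eps^2)$, the two $\lambda_\eps$-terms cancel in $\E[G(Z_\eps)]=-c+o(\lambda_\eps)$, so $\lambda_\eps^{-1}(\E[G(Z_\eps(t,\cdot))]+c)\to0$ uniformly, hence in $\C^\alpha$. Via Feynman--Kac this expansion reduces to the flat-datum statement $\E[\log Z^{(0)}_\eps(t,x)]=c+o(\lambda_\eps)$ --- the correction being $O(\eps^{d-2})$ from truncating the pair local time at time $t/\eps^2$, and $d-2>\f\kappa2-1$ since $\kappa<d$ and $d\geq3$ --- together with $\E[Z^{h_0}_\eps(t,x)/Z^{(0)}_\eps(t,x)]\to(P_t h_0)(x)$, i.e.\ convergence of the polymer endpoint law to the Brownian one. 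Combining the three ingredients gives $h_\eps-\lambda_\eps^{-1}c\to\cU$ in $L^p([T_0,T],\C^\alpha(E))$ in probability; since $\lambda_\eps^{-1}=\eps^{1-\f\kappa2}$ and $c<0$, this is the claim. I expect the main obstacle to be exactly this sharp expansion of $\E[\log Z_\eps]$ to order $\lambda_\eps$: it demands control of the continuum directed polymer free energy in $d\geq3$ weak coupling at the level of its first correction, in particular the convergence of the polymer endpoint distribution to the Brownian one, and it is inseparable from the $\eps$-uniform moment and two-point bounds on $Z_\eps$ that force $\beta<\beta_0$; once those stochastic estimates are available, the $\C^\alpha$-convergence of the linear piece, the vanishing of the chaos-$2$ remainder $r_\eps$ (where the gain $\lambda_\eps=\eps^{\f\kappa2-1}$ only just beats the $\eps^{1-\f\kappa2}$ growth of the product after testing), and the collapse of the log-fluctuations onto the constant $c$ are comparatively routine.
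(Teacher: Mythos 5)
Your architecture (Cole--Hopf, linearise, restore the logarithm, extract $c$ from $\E[\log Z_\eps]$) is coherent, and several pieces are in the right spirit: the cancellation of the renormalisation constant, the definition and negativity of $c$, and the identification of the expansion of $\E[\log Z_\eps]$ as a key deterministic input. But there is a genuine gap at the step where you restore the logarithm, and it sits exactly where the paper's technical core lies. You argue that the centred correction $\lambda_\eps^{-1}\bigl(G(Z_\eps)-\E[G(Z_\eps)]\bigr)$ vanishes in $L^p(\C^\alpha)$ using a two-point decay $\cov(Z_\eps(t,x),Z_\eps(t,y))\lesssim(\eps/|x-y|)^{d-2}$. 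That exponent is wrong in this regime: with $R(x)\sim|x|^{-\kappa}$ and $\kappa<d$, the covariance of $\gfr(u(t,\cdot))$ decays only like $1\wedge|x|^{2-\kappa}$ (this is \eqref{eq:g_cov_space_estimate}; the slow tail of $R$, not Brownian transience, governs the decorrelation, and this is precisely what distinguishes the non-integrable setting from the short-range one). After rescaling this gives $\cov\bigl(G(Z_\eps(t,x)),G(Z_\eps(t,y))\bigr)\lesssim\eps^{\kappa-2}|x-y|^{2-\kappa}$, so multiplying by $\lambda_\eps^{-2}=\eps^{2-\kappa}$ and testing at scale $\delta\sim 1$ yields $O(1)$, not $o(1)$: as far as such upper bounds can tell, $\lambda_\eps^{-1}(G(Z_\eps)-\E G(Z_\eps))$ is exactly of order one in $\C^{1-\kappa/2}$. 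Its vanishing is a cancellation, not a size estimate \textemdash\ it is the statement that $\Phi=\mathrm{id}$ and $\Phi=\log$ produce the \emph{same} limit, equivalently that $\nu_G=\E[Z\,G'(Z)]=\E[Z-1]=0$. Identifying this constant is what the Clark--Ocone representation combined with the homogenisation of the fundamental solution, $w_{s,y}(t,x)\approx P_{t-s}(y-x)\vec Z(t,x)\cev Z(s,y)$ (Theorem~\ref{theo:homogenisation}), is for; your proposal bypasses this machinery entirely, and without it the key step cannot be closed. (Note that for $Z_\eps$ itself the bound $\eps^{\kappa-2}|x-y|^{2-\kappa}$ is sharp and produces the \emph{non-vanishing} limit $\cU^0$ with covariance $|x|^{-\kappa}$; the $(d-2)$-heuristic belongs to the integrable-correlation universality class, which is not the one studied here.)

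Two secondary points. First, your claim that $u_\eps=(Z_\eps-1)/\lambda_\eps\to\cU$ is, for flat data, the main theorem of \cite{GHL23_arxiv_preprint}; the proposed "Duhamel and Gr\"onwall in $L^p(\C^\alpha)$" is not routine, since pointwise every Wiener chaos of $r_\eps$ is of the same order $\eps^{1-\kappa/2}$ and smallness only emerges after spatial testing via decorrelation estimates \textemdash\ essentially the analysis of that companion paper, which you could cite instead. Second, your rate $O(\eps^{d-2})$ for $\E[\log Z^{(0)}_\eps(t,x)]-c$ again uses the wrong exponent: truncating $\int R(\tilde B_u)\,du$ at $t/\eps^2$ costs $(t/\eps^2)^{1-\kappa/2}=\lambda_\eps^2\,t^{1-\kappa/2}$ (compare the paper's bound $|F(t)-c|\lesssim 1\wedge t^{(2-\kappa)/2}$); fortunately this is still $o(\lambda_\eps)$, so that particular conclusion survives the correction.
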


We provide the definition and comment on the constant $c$ in Remark~\ref{rem:constant_c} below, 
whilst the proof of this result will be given in Section~\ref{sec:proofMain}, where we also provide the
definition of the spaces $\C^{\alpha}(E)$ and their semi-norms.
In fact, we will show a limit theorem for general functions $\Phi(u)$ of  solutions to the 
multiplicative stochastic heat equation
\begin{equ}
	\label{eq:basic_linear_mSHE}
	\partial_t u(t,x)=\tfrac 12 \Delta u (t,x)+\beta u(t,x) \xi(t,x)\;, \qquad u(0,\cdot) = 1\;,
\end{equ}
from which Theorem~\ref{thm:main-theorem-KPZ} essentially follows as a special case with $\Phi = \log$.
(We still need a little bit more work since we allow for general initial conditions in 
Theorem~\ref{thm:main-theorem-KPZ}, while our general fluctuation result, Theorem \ref{thm:gen_transf-convergence-light}, holds for $u_0 \equiv 1$.
It is this additional work that is performed at the end of Section~\ref{sec:proofMain}.)
Under the hypotheses stated earlier, \eqref{eq:basic_linear_mSHE} admits a unique 
solution \cite[Thm~0.2]{peszat2000nonlinear}.
%\cite[Thm.~6.7]{daPrato_Inf}.
Since we are interested in its large-scale fluctuations, we write 
$u_\epsilon(t,x) = u(t/\epsilon^2, x/\epsilon)$
for the diffusively rescaled solution,
which solves
		\begin{equ}\label{eq:rescaled_mSHE}
		\partial_t u_\epsilon(t,x)=\ff 12 \Delta u_\epsilon(t,x)+\beta \epsilon^{\frac \kappa 2 -1}  u_\epsilon(t,x) \xi^\epsilon(t, x)\;, \qquad u_\epsilon(0, \cdot)=1,
	\end{equ}
with $\xi^\epsilon$ as before.
We then have the following functional central limit theorem, which will be restated as 
Theorem~\ref{thm:gen_transf-convergence} below and proven in Section~\ref{sec:proofMain}.

%% Light statement 
\begin{theorem}\label{thm:gen_transf-convergence-light}  
Consider a map $\fr : \R_+ \to \R$ satisfying Assumption \ref{transformation} and let
\begin{equ}
	u_\eps^\Phi(t,x) = 
	\eps^{1-\frac \kappa 2} \bigl( \fr(u_\eps(t,x))-\E [\fr(u_\eps(t,x))]\bigr)\;.
\end{equ}
With assumptions on $d$, $R$, $p$, $\alpha$ and $\sigma$ as in Theorem~\ref{thm:main-theorem-KPZ},   there exists $\beta_0^{\fr}=\beta_0^{\fr}(\alpha, p) >0$ and $\nu_{\fr} \geq 0$ such that, for all $\beta < \beta_0^{\fr}$, for any $T >0$,  and any good coupling, $u_\eps^\Phi$
converges in probability to $ \nu_{\fr} \cU^0$ in the space $L^p([0, T], \C^{\alpha}(E))$ for any compact $E \subset \R^d$. Here, $\cU^0$ denotes the solution to the additive stochastic heat equation
	\begin{equ}\label{eq:EW_eqn}
		\partial_t \cU^0 =\ff 12 \Delta \cU^0 + \beta \xi^0\;, \qquad \cU^0(0, \cdot) = 0\;.
	\end{equ}
\end{theorem}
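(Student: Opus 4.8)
The plan is to combine a Wiener chaos expansion of $\fr(u_\eps)$ with the continuity of the stochastic convolution under the good coupling. Write $p_t$ for the heat kernel of $\tfrac12\Delta$, fix the good coupling $\xi^\eps=\phi^\eps\ast\eta$ with $\phi^\eps(w)=\eps^{-(d+\kappa)/2}\phi(w/\eps)$ and $\eta$ a space--time white noise, and let
\begin{equ}
	\cU^\eps(t,x)=\beta\int_0^t\!\!\int_{\R^d}p_{t-s}(x-y)\,\xi^\eps(ds,dy)
\end{equ}
be the mild solution of the $\eps$-analogue of \eqref{eq:EW_eqn} driven by $\xi^\eps$. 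Since $\cU^\eps$ is a bounded linear functional of $\xi^\eps$ in the relevant topology and the good coupling gives $\xi^\eps(\psi)\to\xi^0(\psi)$ in probability — hence, by Gaussianity and convergence of covariances, in $L^2$ — for every test function $\psi$, the estimates of \cite{GHL23_arxiv_preprint} on the additive SHE with covariance $|x|^{-\kappa}$ give $\cU^\eps\to\cU^0$ in probability in $L^p([0,T],\C^\alpha(E))$. It therefore suffices to prove
\begin{equ}[e:reduction]
	u_\eps^\fr-\nuf\,\cU^\eps\longrightarrow 0\qquad\text{in probability in } L^p([0,T],\C^\alpha(E)),
\end{equ}
a statement on a single, fixed probability space.

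To establish \eqref{e:reduction} I would expand $\fr(u_\eps(t,x))=\E[\fr(u_\eps(t,x))]+\sum_{k\geq1}\mathcal J_k^\eps(t,x)$ into homogeneous Wiener chaoses with respect to $\eta$ and split the argument in two. \emph{Higher chaoses are negligible:} using uniform-in-time $L^q$ bounds on $u$, on its Malliavin derivatives and on the associated conditioned (point-to-point) partition functions — available for $\beta$ small in the weak-disorder regime $d\geq3$ \cite{peszat2000nonlinear} — together with the fact that $\kappa>2$ makes $\int_0^\infty(R\ast p_{2r})(\cdot)\,dr$ finite, one shows that the faster spatial decorrelation of the chaoses of order $\geq2$ yields $\E\bigl|\bigl\langle\eps^{1-\f\kappa2}\sum_{k\geq2}\mathcal J_k^\eps(t,\cdot),\psi_\lambda^x\bigr\rangle\bigr|^q\lesssim\lambda^{q\alpha'}\,o_\eps(1)$ for some $\alpha'>\alpha$, uniformly over localization scales $\lambda\in(0,1]$ and $(t,x)$ in compacts, where $\psi_\lambda^x$ denotes a test function localized at scale $\lambda$ around $x$. \emph{The first chaos is asymptotically $\nuf\cU^\eps$:} writing $\mathcal J_1^\eps(t,x)=\int_0^t\!\int\Theta_s^\eps(t,x;y)\,\eta(ds,dy)$ with $\Theta_s^\eps(t,x;y)=\E[\fr'(u_\eps(t,x))\,\mathcal D_{s,y}^\eta u_\eps(t,x)]$, I would insert the Feynman--Kac formula for the Malliavin derivative of \eqref{eq:rescaled_mSHE}, namely $\mathcal D_{s,y}^\eta u_\eps(t,x)=\beta\eps^{\f\kappa2-1}\int\phi^\eps(z-y)\,p_{t-s}(x-z)\,q_{s,z}^\eps(t,x)\,dz$ for conditioned partition functions $q^\eps$, so that $\Theta_s^\eps(t,x;y)=\beta\eps^{\f\kappa2-1}\int\phi^\eps(z-y)\,p_{t-s}(x-z)\,\E[\fr'(u_\eps(t,x))\,q_{s,z}^\eps(t,x)]\,dz$, and then prove the renewal/mixing estimate
\begin{equ}[e:renewal]
	\E\bigl[\fr'(u_\eps(t,x))\,q_{s,z}^\eps(t,x)\bigr]\longrightarrow\nuf\qquad(\eps\to0),
\end{equ}
uniformly over $t-s\geq\delta>0$ and $z$ in the diffusive bulk, with $\nuf\geq0$ the expectation of the corresponding stationary ($t-s=\infty$) weak-disorder object. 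Feeding \eqref{e:renewal} back gives $\Theta_s^\eps(t,x;y)\approx\nuf\,\beta\eps^{\f\kappa2-1}(\phi^\eps\ast p_{t-s})(x-y)$; then, using $\int(\phi^\eps\ast p_{t-s})(x-\cdot)\,\eta(ds,\cdot)=\int p_{t-s}(x-\cdot)\,\xi^\eps(ds,\cdot)$, one obtains $\eps^{1-\f\kappa2}\mathcal J_1^\eps\approx\nuf\,\cU^\eps$, the error being estimated in $L^2$ by the It\^o isometry for $\eta$ together with \eqref{e:renewal}.

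Combining the two parts, the left-hand side of \eqref{e:reduction}, tested against $\psi_\lambda^x$, is bounded in $L^q(\Omega)$ by $\lambda^{\alpha'}o_\eps(1)$ uniformly in $(\lambda,x)$; running the same estimates on space--time increments and invoking a Besov/Kolmogorov criterion turns this into $\E\|u_\eps^\fr-\nuf\cU^\eps\|_{L^p([0,T],\C^\alpha(E))}^q\to0$ for $q$ large, hence \eqref{e:reduction}, and together with $\cU^\eps\to\cU^0$ this yields the theorem. The endpoint $t=0$ is trivial since $u_\eps^\fr(0,\cdot)\equiv0\equiv\nuf\cU^0(0,\cdot)$.

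I expect the main obstacle to be the renewal estimate \eqref{e:renewal} in its quantitative, uniform form: one must show that $\E[\fr'(u_\eps(t,x))\,q_{s,z}^\eps(t,x)\mid\mathcal F_s]$ concentrates around the deterministic constant $\nuf$ at a rate that is uniform in the macroscopic variables, integrable against the concentrating kernel $p_{t-s}$ as $s\uparrow t$, and strong enough to also control the space--time increments entering the $\C^\alpha$ bound. Because $R$ is only assumed to decay like $|x|^{-\kappa}$ with $\kappa>2$ rather than to be integrable, the bookkeeping of the powers of $\eps$ is more delicate than in the short-range Edwards--Wilkinson results, and one must verify that they combine to exactly $\eps^0$; the uniform-in-time moment bounds on $u$, its Malliavin derivatives and the conditioned partition functions, valid only for $\beta$ below a threshold, form the technical backbone throughout.
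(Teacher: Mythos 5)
Your strategy is viable and genuinely different in organisation from the paper's. The paper does not expand $\fr(u_\eps)$ into Wiener chaoses; it uses the Clark--Ocone formula (Lemma~\ref{Clark-Ocone}) to write $Y_t^{\eps,g}$ as a single It\^o integral of the \emph{adapted} integrand $\E[D_{s,y}\fr(u_\eps(t,x))\mid\F_s]$, and then replaces that integrand by $\beta\,\nuf\,u(s,y)P_{\tau-s}(x-y)$ via a pathwise ($L^p(\Omega)$) homogenisation of the Green's function, $w_{s,y}(t,x)/P_{t-s}(y-x)\approx \vec Z(t,x)\cev Z(s,y)$ (Theorem~\ref{theo:homogenisation}), plus mixing of the stationary fields (Proposition~\ref{prop:fluctuations_approx}); the residual higher-order term is then killed in Proposition~\ref{prop:approx_X_Ueps}. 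Your route --- first chaos identified via a renewal estimate, higher chaoses negligible --- is closer to the local-limit-theorem arguments used in the short-range literature, and it would only require controlling the \emph{deterministic} kernel $\E[\fr'(u_\eps(t,x))\,q^\eps_{s,z}(t,x)]$ rather than the random field itself, which is in principle a weaker statement. What the paper's decomposition buys is that the conditional expectation keeps the adapted factor $u(s,y)$ in place, so the comparison is directly with $X_t^{\eps,g}$ and all error terms are genuine It\^o integrals estimable by BDG; your decomposition instead pushes all of the non-Gaussianity into the ``higher chaoses are negligible'' step.

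The genuine gap is that both pillars of your argument are asserted rather than proved, and the first of them is precisely the main content of the paper. Your renewal estimate is, after unwinding, the statement that $\E[\fr'(u(t,x))\,u(s,z)\,w_{s,z}(t,x)/P_{t-s}(x-z)]\to\E[Z\fr'(Z)]$ with a rate that is uniform and integrable against the concentrating kernel as $s\uparrow t$; this is exactly the expectation shadow of Theorem~\ref{theo:homogenisation}, whose proof occupies all of Section~\ref{sec:Homogenisation} and requires constructing the backward field $\cev Z$, the duality identity $C_{s,y}(t)=\cev Z_{s-t}(s,y)$ of Proposition~\ref{prop:C_Z_relation}, and the quantitative rate $(1+t-s)^{-\mu}$ of Proposition~\ref{prop:rho_homogenisation}. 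None of this is available off the shelf in the long-range continuum setting, and without a rate (not just convergence ``uniformly over $t-s\geq\delta$'') you cannot close the $L^2$ estimate for $\mathcal J_1^\eps-\nuf\cU^\eps$ near the diagonal $s=\tau$, nor obtain the $\lambda^{\alpha'}$-uniformity needed for the $\C^\alpha$ bound. Secondly, the negligibility of chaoses of order $\geq 2$ after multiplication by $\eps^{1-\kappa/2}$ is plausible (it is the analogue of the bound on $B_\eps$ in Proposition~\ref{prop:approx_X_Ueps}, where the faster decorrelation manifests as the gain $\eps^{\theta-2}$ with $\theta\in(2,\kappa\wedge(d+2-\kappa))$), but it requires summing the chaos series with constants uniform in $k$, which is where the smallness of $\beta$ enters and where the non-integrability of $R$ makes the bookkeeping delicate; as written this is a claim, not an argument. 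Finally, a small but real point: the convergence $\cU^\eps\to\cU^0$ is not a direct consequence of $\xi^\eps(\psi)\to\xi^0(\psi)$ for smooth $\psi$, because the kernel $(P\psi)(s,y)$ contains the indicator $\mathbf{1}_{s\geq0}$; the paper needs Lemma~\ref{lem:convergenceIndicator} together with a tightness argument in $\C^{\gamma'}([0,T],\C^{\alpha'}(E))$ to upgrade this to convergence in probability in $L^p([0,T],\C^\alpha(E))$.
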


Our class of possible transformations includes the Cole--Hopf transform $\Phi(u) = \log u$, for 
which one has $\nu_\fr = 1$. In this case we recover the fluctuations of KPZ since
$h_\eps (t,x) = \eps^{1-\f\kappa2} \log u_\epsilon(t,x)$ solves \eqref{eq:KPZ_rescaled} with flat initial conditions $h_0 \equiv 0$. 
We discuss and provide the explicit expression of $\nu_{\fr}$ in the general case
in Remark~\ref{rem:constant_eff_var} below. 

The main tool  (and this is also
one of the main novelties of this article) to prove the above is a homogenisation result for the fundamental solution of \eqref{eq:basic_linear_mSHE}.
In order to formulate it, write $w_{s,y}(t,x)$ for the solution to 
\eqref{eq:basic_linear_mSHE} at times $t \geq s$ with initial condition at time $s$ given by
$w_{s,y}(s,\cdot) = \delta_y$:
%\xml{$$w_{s,y}(t,x) =P_{t-s}(y,x)\E[e^{\int_0^t  u_{t-s}(y, x+B_s)\xi(ds,y)}| B_t=x-y]$$
%$$\int f(y)w_{s,y}(t,x )dy=\int f(y)P_{t-s}(x-y) dy+\int  P_{t-s-s'}(x-y')\int f(y)w_{s,y}(s',y' )dy \xi(ds', dy').$$}

\begin{theorem}\label{theo:homogenisation}
Let $d\geq 3$ and $\xi$ satisfies Assumption~\ref{assump-cov-KPZ}. Then for any $p\geq 1$, for sufficiently small $\beta$,
 there exist stationary random 
fields $\vec Z$ and $\cev Z$ such that, with $\mu = \f 12 -\f 1\kappa$, 
\begin{equ}[e:mainHomog]
\Big\|\frac{w_{s,y}(t,x)}{P_{t-s}(y-x)} - \vec Z(t,x)\cev Z(s,y)\Big\|_p
\lesssim %(1+ |t-s|)^{?}
(1 + t-s)^{-\mu} (1 + (1 + t-s)^{-\f 1 2}|x-y|)
\;.
\end{equ}
\end{theorem}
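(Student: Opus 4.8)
The plan is to start from the Feynman--Kac representation of \eqref{eq:basic_linear_mSHE}: writing $\mathcal Z_{s,y}(t,x):=w_{s,y}(t,x)/P_{t-s}(x-y)$, one has
\begin{equ}
	\mathcal Z_{s,y}(t,x)=\EB\Big[\exp\Big(\beta\int_s^t\xi(r,B_r)\,dr-\tfrac{\beta^2R(0)}2\,(t-s)\Big)\Big]\;,
\end{equ}
the expectation being over a Brownian bridge $B$ from $(s,y)$ to $(t,x)$; by the space--time stationarity of $\xi$ we may take $s=0$ and write $\tau=t$. I would set up two ingredients first. (a) Since $\kappa>2$, the integral $\int_1^\infty r^{-\kappa/2}\,dr$ converges, so for $\beta$ small we are in the weak--disorder regime: the point--to--line partition functions $\cev Z_T(s,y):=\int w_{s,y}(s+T,z)\,dz$ and $\vec Z_T(t,x):=\int w_{t-T,z}(t,x)\,dz$ are positive $L^2$--bounded martingales in $T$ (with respect to the appropriate one--sided filtrations), hence converge in every $L^p$ to positive stationary limits $\cev Z(s,y)$ and $\vec Z(t,x)$ of mean one, with quantitative rates; one also gets uniform moment bounds $\sup\|\mathcal Z_{s,y}(t,x)\|_p<\infty$. (b) An averaging estimate: for the random fields $z\mapsto\mathcal Z_{0,y}(\vartheta,z)$ and $z\mapsto\mathcal Z_{\vartheta,z}(\tau,x)$, whose spatial increments have covariances controlled by the polynomial decay $R\sim|\cdot|^{-\kappa}$ of the noise, the difference between a Gaussian average of the field over scale $\sqrt\tau$ and the corresponding infinite--horizon limit is bounded, after a Fourier computation, by a quantity of the order appearing on the right of \eqref{e:mainHomog}.

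Next I would run a mid--point decomposition. Chapman--Kolmogorov at time $\tau/2$ gives
\begin{equ}
	\mathcal Z_{0,y}(\tau,x)=\int\pi_{\tau,y,x}(z)\,\mathcal Z_{0,y}(\tau/2,z)\,\mathcal Z_{\tau/2,z}(\tau,x)\,dz\;,
\end{equ}
where $\pi_{\tau,y,x}$ is the law of the bridge mid--point (a Gaussian of variance $\tau/4$ centred at $(x+y)/2$), and the two factors in the integrand are independent, being measurable with respect to $\xi$ on the disjoint slabs $[0,\tau/2]$ and $[\tau/2,\tau]$. Writing $\mathbb E_Z$ for averaging over $Z\sim\pi_{\tau,y,x}$, I would split
\begin{equ}
	\mathcal Z_{0,y}(\tau,x)-\vec Z(\tau,x)\,\cev Z(0,y)
	=\mathbb E_Z\big[\big(\mathcal Z_{0,y}(\tau/2,Z)-\cev Z(0,y)\big)\,\mathcal Z_{\tau/2,Z}(\tau,x)\big]
	+\cev Z(0,y)\big(\mathbb E_Z[\mathcal Z_{\tau/2,Z}(\tau,x)]-\vec Z(\tau,x)\big)\;,
\end{equ}
and bound the two terms in $L^p$ separately.

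For the second term one uses $\|\cev Z(0,y)\|_p\lesssim1$ and recognises $\mathbb E_Z[\mathcal Z_{\tau/2,Z}(\tau,x)]$ as a Gaussian--smoothed backward point--to--line partition function; adding and subtracting $\vec Z_{\tau/2}(\tau,x)$ and using the martingale rate of (a) together with the averaging estimate (b) for the field $z\mapsto\mathcal Z_{\tau/2,z}(\tau,x)$ gives a bound of the claimed order, the $|x-y|$--dependence entering only through the displacement of the centre of $\pi_{\tau,y,x}$ relative to that of the heat kernel. For the first term one applies Hölder's inequality together with the uniform moment bound on $\mathcal Z_{\tau/2,Z}(\tau,x)$, reducing to $\mathbb E_Z\|\mathcal Z_{0,y}(\tau/2,Z)-\cev Z(0,y)\|_{2p}$; writing this difference as the martingale tail $\cev Z(0,y)-\cev Z_{\tau/2}(0,y)$ plus the conditioned--endpoint discrepancy $\mathcal Z_{0,y}(\tau/2,Z)-\cev Z_{\tau/2}(0,y)$, the first piece is controlled by the martingale rate and the second by (b). Finally I would check that $\cev Z$ and $\vec Z$ do not depend on the auxiliary splitting time (so that they are genuinely stationary) and that $\mathbb E[\vec Z(\tau,x)\cev Z(0,y)]\to1$, consistently with $\mathbb E[\mathcal Z_{0,y}(\tau,x)]=1$; if necessary one iterates the scheme on dyadic scales to reach the stated uniform bound.

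I expect the main obstacle to be the averaging estimate (b) with the sharp exponent $\mu=\tfrac12-\tfrac1\kappa$ and the correct linear growth in $|x-y|/\sqrt\tau$: this requires computing the second--moment behaviour of the smoothed partition--function fields minus their limits, carefully tracking the competition between the long--range ($\sim|z-z'|^{2-\kappa}$) spatial correlations of these fields and the diffusive scale $\sqrt\tau$ of the smoothing, and in particular the near--endpoint regime of the Brownian bridge where a naive Taylor expansion of $R$ is invalid. A secondary difficulty is then upgrading these second--moment bounds to $L^p$ for all $p$, uniformly in the spatial variables, for which one would use that the relevant quantities lie in a fixed Wiener chaos up to $O(\beta)$ corrections, together with hypercontractivity.
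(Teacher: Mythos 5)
Your route (Feynman--Kac, mid-point factorisation over independent time slabs, martingale limits of point-to-line partition functions, plus a Gaussian averaging estimate) is genuinely different from the paper's. The paper instead writes the defect $\rho_{s,y}(t,x)=w_{s,y}(t,x)/P_{t-s}(x-y)-C_{s,y}(t)\vec Z(t,x)$ (with $C_{s,y}(t)=\int w_{s,y}(t,x)\,dx$, identified with a time-reversed copy of $\vec Z$ by a duality argument), derives a closed mild equation for $\rho_{s,y}$ whose forcing consists of a kernel-mismatch term (Brownian bridge kernel minus the two heat kernels, Lemma~\ref{lem:Kernel_P_bound}) and Skorokhod correction terms, and closes the estimate by a weighted-norm contraction using the smallness of $\beta$. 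Your identification of the limit objects is consistent with the paper's ($\cev Z_T(s,y)$ is exactly $C_{s,y}(s+T)$, and $\vec Z_T(t,x)=u^{(t-T)}(t,x)$), and the mid-point decomposition with independence of the two factors is correct.

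However, there is a genuine gap in your treatment of the first term. After applying H\"older pointwise in $Z$ you reduce to $\mathbb{E}_Z\|\mathcal Z_{0,y}(\tau/2,Z)-\cev Z(0,y)\|_{2p}$, but for a \emph{fixed} mid-point $z$ this quantity does not decay in $\tau$: the field $\mathcal Z_{0,y}(\tau/2,\cdot)$ converges to $\cev Z(0,y)$ only after spatial averaging (indeed, by the theorem itself, $\mathcal Z_{0,y}(\tau/2,z)-\cev Z(0,y)\approx(\vec Z(\tau/2,z)-1)\cev Z(0,y)$, which has $L^{2p}$ norm of order $\beta$ uniformly in $\tau$). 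Pulling the norm inside the $\mathbb{E}_Z$-average therefore destroys the only mechanism producing decay; you must keep the $z$-integral inside the $L^p$ norm and exploit the spatial decorrelation of $z\mapsto\mathcal Z_{0,y}(\tau/2,z)$ over the diffusive scale $\sqrt\tau$ (for $p=2$, via the factorised second moment over the two independent slabs). This in turn requires a quantitative covariance bound of the type $\cov(\mathcal Z_{0,y}(\vartheta,z),\mathcal Z_{0,y}(\vartheta,z'))-\var(\cev Z(0,y))\lesssim\beta^2(1\wedge|z-z'|^{2-\kappa})$ uniformly in $\vartheta$, which is essentially equivalent to the factorisation $\mathcal Z_{0,y}\approx\cev Z(0,y)\vec Z$ one is trying to prove; without an independent derivation (e.g.\ a Clark--Ocone/chaos computation for the point-to-point partition function, analogous to \eqref{eq:g_cov_space_estimate}) the argument is circular. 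This is precisely the difficulty the paper sidesteps by setting up the equation for $\rho_{s,y}$ and absorbing the dangerous self-interaction term through the $\beta$-smallness contraction. A secondary (standard but non-trivial) point is that the Feynman--Kac exponential for white-in-time noise is only formal and needs to be interpreted as a Wick exponential or via mollification.
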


A slightly more detailed version of this statement will be formulated as 
Proposition~\ref{prop:rho_homogenisation} below.
The field $\vec Z$ is the analogue of the pullback field already appearing in
\cite{GHL23_arxiv_preprint,dunlap2018homo, cosco2019spacetime_fluct, cosco2020renormalizing} where it was denoted by $Z$. It
is obtained as the limit $\vec Z(t,x) = \lim_{s \to -\infty} u^{(s)}(t,x)$, where
$u^{(s)}$ denotes the solution to \eqref{eq:basic_linear_mSHE} with initial condition $1$ at time $s$. It turns out that the field $\cev Z$ is given by ``$\vec Z$ running backwards in time'' 
(which explains our notations) in a sense that will be made precise below, see Theorem \ref{thm:stat_field} and equation \eqref{cev-Z}.
A shadow of the field $\cev Z$ can be
seen in the constant $\bar c$ appearing in \cite[Eq.~1.6]{dunlap2018homo}, whose inverse would 
correspond to the expectation of $\cev Z$ (which in our case equals $1$ as a consequence of the
fact that we consider noise that is white in time). 
More explicitly, these fields satisfy the equations
\begin{equs}
		\vec Z(t,x)&=1 + \beta \int_{-\infty}^t\int_{\R^d} P_{t-r}(x-z) \vec Z(r,z)\,\xi(dr,dz)\;,\label{e:vecZ}\\
		\cev Z(s,y)&=1+\beta \int_s^\infty \int_{\R^d} P_{r-s}(y-z)\cev Z(r,z)\,\xi(dr,dz)\;,\label{e:cevZ}
\end{equs}
with the stochastic integral appearing in the second expression being of backwards Itô type.

Using the Clark--Ocone formula we recover a divergence representation for the fluctuations of $\fr (u)$, which can be expressed in terms of the fundamental solution of \eqref{eq:basic_linear_mSHE}. Relying on the large time homogenisation of $w_{s, y}$ from Theorem~\ref{theo:homogenisation} and mixing of the stationary fields, we are able to approximate the fluctuations of $\fr(u)$ by the fluctuations of the linear SHE  (Proposition~\ref{prop:fluctuations_approx}) and then prove Theorem~\ref{thm:gen_transf-convergence-light}. 
We shall explain more in details the argument and carry out the proof in Section~\ref{sec:main}.

%\begin{remark}
%It would be interesting to understand the fluctuations behaviour when $\kappa = 2$ or below, where a transition was already pointed out in \cite{Lacoin}. 
%\end{remark}

\begin{remark}\label{rem:constant_c}
Unlike what happens in the case of short-range correlations, the constant $c$ in Theorem~\ref{thm:main-theorem-KPZ} is all that is left from the KPZ nonlinearity since neither the covariance of the noise nor the coefficient in front of the Laplacian end up being affected by the
scaling limit.
The constant is given by $c=\E[\log Z]$ with $Z \eqlaw \vec Z(0,0)$. Since $\E Z = 1$ and $Z$ has non-zero variance
by \eqref{e:vecZ}, $c$ is strictly negative.
%where $Z$ denotes a real-valued random variable with the same law as $\vec Z(t,x)$.
%\textcolor{dg}{explain why  it is negative} 
The way one can intuitively understand the presence of $c$ is the following: since we start with a fixed $\eps$-independent bounded continuous initial condition $h_0$ which looks quite different from
the stationary solutions (which exhibit oscillations of size about $\eps^{1-\f\kappa2}$), 
the KPZ nonlinearity
and the renormalisation constant appearing in \eqref{eq:KPZ_rescaled} don't quite balance 
out initially. By the time the process
has become locally sufficiently close to stationarity, 
this imbalance has already caused it to shoot
down by a substantial amount.

This strongly suggests that one can find a collection $Y_\eps$ of 
stationary processes such that $\lim_{\eps \to 0} Y_\eps = 0$ in distribution
and such that
a statement analogous to Theorem~\ref{thm:main-theorem-KPZ}, but without 
the constant $c$, holds provided that one starts $h_\eps$  with
initial condition $Y_\eps + h_0$. In fact, we believe that one should be able to take
\begin{equ}
Y_\eps(x) \eqlaw \eps^{1-\f\kappa2}\bigl(\log (u_\eps(t_\eps,x)) - c\bigr)\;,
\end{equ}
(but independent of the noise $\xi$) for a suitable sequence of times $t_\eps \to 0$, and show that $\lim_{\epsilon\to 0}h_\epsilon=\cU$.
\end{remark}

\begin{remark}\label{rem:constant_eff_var}
	With the notations above at hand, one has $\nu_{\fr} = \E [Z \,\fr' (Z)]$ in Theorem~\ref{thm:gen_transf-convergence-light} where $Z=\vec Z(0,0)$. 
	We will see in Lemma~\ref{lem:transform_moments+decorrelation} %Proposition~\ref{prop:properties_u_Z} 
	that our assumptions on $\fr$
	guarantee that this quantity is finite.
	Given the dependence of the pullback stationary solution $\vec Z$ on $\beta$, we note that  $\nu_{\fr}$ actually depends on $\beta$ in general and that, as $\beta\to 0$, $\nu_{\fr}$ converges to $\Phi'(1)$.
	In the special case $\fr(x)=x$, one has $\nu_\fr = 1$ and we recover the result of  \cite{GHL23_arxiv_preprint},
	at least in the case $\sigma(u) = u$.
	
	Note that the effective
	variance differs from that obtained for the case of integrable $R$ in 
	\cite{gu18edwards, magnen2018scaling, dunlap2020_KPZ_fluctuations,cosco2020law, lygkonis2020edwards, gu20_nlmSHE, dunlap2018homo, cosco2019spacetime_fluct}. There, the effective variance 
	$\nu_{\textrm{eff}}$ is given by the expression
	\begin{equ}
		\nu^2_{\textrm{eff}}  = \nu_\Phi^2 \int_{ \R^d} R(x) \E \big[ \vec Z(0,x) \vec Z(0,0)\big] dx\;.
	\end{equ}
	In their works, the fluctuation scale depends on the dimension, while  in our setting the fluctuations scale $\epsilon^{\kappa -2}$ depends on the decay coefficient $\kappa$ of the noise covariance.
\end{remark}

\subsubsection*{Structure of the article}

The remainder of the article is structured as follows. % After introducing some notations in the remainder of this section, 
In Section~\ref{sec:stationary}, we introduce the random fields $\vec Z$ and $\cev Z$ and we collect some useful a priori bounds on these fields as well as on the solutions to the multiplicative stochastic heat equation \eqref{eq:basic_linear_mSHE} and their Malliavin derivatives.
Section~\ref{sec:Homogenisation} forms the bulk of the article and contains the proof of the homogenisation
result. Finally, in Section~\ref{sec:main}, we use all of these ingredients to provide a proof of Theorems~\ref{thm:main-theorem-KPZ} and~\ref{thm:gen_transf-convergence-light}.

\subsubsection*{Acknowledgements}

The authors are grateful to Alex Dunlap and Yu Gu for discussions on this topic and in particular
for pointing out that our proof may yield convergence in probability in this setting.
This research was partially supported by the Royal Society through MH's research professorship.  
XML acknowledges
support from EPSRC grant EP/V026100/1 and EP/S023925/1. LG is supported by the EPSRC Centre for Doctoral Training in Mathematics of 
Random Systems: Analysis, Modelling 
and Simulation (EP/S023925/1).

\section{Analysis of the stationary solution}
\label{sec:stationary}

\subsection{Notations and conventions}\label{sec:notation}
The background probability space is  $(\Omega, \F, \mathbb{P})$, and we write $\F_t $ for the filtration 
generated by $\xi$, 
i.e.\ $\F_t$ 
is the smallest $\mathbb{P}$-complete $\sigma$-algebra such that the random variables 
$\{\xi( \phi) \,:\, \supp\phi \subset (-\infty,t]\times \R^d, \phi\in \C_c^\infty (\R_+\times \R^d)\}$ are all $\F_t$-measurable.  We generally interpret an integrable function $\xi$  as a distribution by the following expression:
$$\xi(\phi)=\int_{\R\times \R^d} \xi(s,y)\phi(s,y) dsdy.$$
The notation $\| \cdot \|_p$ stands for the $L^p(\Omega)$ norm. Moreover, 
\begin{itemize}
	\item   With $a\lesssim b$ we mean $a \leq C b$ for some constant $C>0$, independent of $\epsilon$. 
\item
 $\Ccinf(\R^d)$ denotes the set of  smooth functions on $\R^d$ with compact support. 
 \item For $E\subset \R^d$ open and $k\in \N$, we denote with  $\C^k(E)$ the space of $k$ times continuously differentiable functions on $E$ and $\C_b^k(E)$ the subset of bounded $\C^k(E)$ functions with all partial derivatives up to order $k$ bounded.
\end{itemize}

\subsection{Forward and backward stationary solutions}

For the purpose of this section, it will be convenient to assume that the underlying
probability space $\Omega$ is the space of space-time distributions and that the noise $\xi$ is simply
the canonical process.

With $u^{(s)}(t,x)$ denoting the solution to \eqref{eq:basic_linear_mSHE} 
with initial condition $u^{(s)}(s,\cdot) \equiv 1$ at time $s$, we recall \cite[Thm~2.10]{GHL23_arxiv_preprint}. To avoid confusion, we remark that in \cite{GHL23_arxiv_preprint}, $u^{(K)}(t,x)$ denotes the solution that starts from time $-K$.   A result of similar nature can be found in \cite[Thm 1.1]{gu20_nlmSHE}.

\begin{theorem}\label{thm:stat_field}
Let $d\geq 3$ and $\xi$ satisfies Assumption~\ref{assump-cov-KPZ}. Given any $p\geq1$, there exists a positive constant 
$\beta_1(p) = \beta_1(p, d, R)$ and a space-time stationary random field 
$\vec Z$ such that, for $\beta \leq \beta_1(p)$, one has 
\begin{equation}\label{eq:Lp_cvg_rate}
	\big\|u^{(s)}(t, x) - \vec Z(t,x)\big\|_{L_p(\Omega)}\lesssim 1 \wedge (t-s)^{\frac {2-\kappa} 4}\;,
\end{equation} 
uniformly over all $s \le t$ and $x \in \R^d$. Recall that $\kappa\in (2,d)$. Furthermore, for
any compact region $\mfK \subset \R^{d}$ and every $t \in \R$, one has
$$\lim_{s \to -\infty} \E \sup_{x \in \mfK} |  u^{(s)}(t,x)-\vec {Z}(t,x) |^p \to 0.$$
\end{theorem}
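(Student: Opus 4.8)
\emph{Proof strategy.} I would work from the mild formulation of \eqref{eq:basic_linear_mSHE} with flat datum at time $s$ and its Wiener chaos expansion,
\begin{equ}
u^{(s)}(t,x) = 1 + \beta\int_s^t\!\!\int_{\R^d} P_{t-r}(x-z)\,u^{(s)}(r,z)\,\xi(dr,dz) = \sum_{n\geq 0}\beta^n J_n^{(s)}(t,x)\;,
\end{equ}
where $P$ is the heat kernel on $\R^d$ and $J_n^{(s)}$ is the $n$-fold iterated stochastic integral of a convolution of heat kernels over the simplex $\{s<r_1<\dots<r_n<t\}$, so $J_0^{(s)}\equiv 1$. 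Equivalently, the Feynman--Kac formula for white-in-time noise gives $\E[u^{(s)}(t,x)^2]=\EB\bigl[\exp\bigl(\beta^2\int_0^{t-s}R(\sqrt{2}\,B_r)\,dr\bigr)\bigr]$. The only place Assumption~\ref{assump-cov-KPZ} enters is through the elementary bound, valid precisely because $2<\kappa<d$, that $\sup_{x}\int_{\R^d}P_{2r}(x-w)\,R(w)\,dw\lesssim 1\wedge r^{-\kappa/2}$ and hence $\int_T^\infty r^{-\kappa/2}\,dr\lesssim (1+T)^{(2-\kappa)/2}$. The first fact, via Khasminskii's lemma (equivalently, hypercontractivity applied to the geometrically convergent chaos series), delivers the uniform moment bounds $\sup_{s\leq t,\,x}\|u^{(s)}(t,x)\|_p\lesssim 1$ for $\beta$ below a threshold $\beta_1(p)$; the second will produce the rate.

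\emph{Cauchy in $L^2$ and the rate.} To see that $u^{(s)}(t,x)$ is Cauchy in $L^2$ as $s\to-\infty$, fix $s'<s<t$ and use the linearity of \eqref{eq:basic_linear_mSHE} with the fundamental solution $w_{s,z}$: since $u^{(s)}(t,x)=\int w_{s,z}(t,x)\,dz$ and $u^{(s')}(t,x)=\int w_{s,z}(t,x)\,u^{(s')}(s,z)\,dz$,
\begin{equ}
u^{(s')}(t,x)-u^{(s)}(t,x)=\int_{\R^d}w_{s,z}(t,x)\bigl(u^{(s')}(s,z)-1\bigr)\,dz\;.
\end{equ}
The crucial point is that $w_{s,\cdot}(t,x)$ depends only on the noise on $[s,t]$, whereas $u^{(s')}(s,\cdot)-1$ depends only on the noise on $(-\infty,s]$, so the two factors are independent and both have the right mean. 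Taking second moments, using $\E[w_{s,z}(t,x)w_{s,z'}(t,x)]\lesssim P_{t-s}(x-z)P_{t-s}(x-z')$ (valid for small $\beta$) together with a Feynman--Kac estimate on the decay of $\E[(u^{(s')}(s,z)-1)(u^{(s')}(s,z')-1)]$ in $|z-z'|$, the whole expression collapses, by the convolution identity for heat kernels and a scaling argument, to $\lesssim \int_{t-s}^\infty r^{-\kappa/2}\,dr\lesssim (t-s)^{(2-\kappa)/2}$, \emph{uniformly in $x$} by translation invariance. This yields the limit $\vec Z(t,x)\eqdef\lim_{s\to-\infty}u^{(s)}(t,x)$ in $L^2$, and letting $s'\to-\infty$ gives \eqref{eq:Lp_cvg_rate} for $p=2$. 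The upgrade to general $p$ is then a chaos computation: writing $u^{(s)}(t,x)-\vec Z(t,x)=\sum_{n\geq1}\beta^n(J_n^{(s)}(t,x)-J_n^{(\infty)}(t,x))$, the level-$n$ difference kernel lives on configurations whose earliest time lies below $s$, so that $\|J_n^{(s)}(t,x)-J_n^{(\infty)}(t,x)\|_2^2\lesssim C^n(t-s)^{(2-\kappa)/2}$; since $\|\cdot\|_p\lesssim_p(p-1)^{n/2}\|\cdot\|_2$ on the $n$-th chaos, summing gives the claimed rate as soon as $\beta^2(p-1)C<1$, which defines $\beta_1(p)$. Space-time stationarity of $\vec Z$ is inherited from that of $\xi$: uniqueness for \eqref{eq:basic_linear_mSHE} identifies the solution built from the shifted noise $\xi(\cdot+a,\cdot+h)$ with $(t,x)\mapsto u^{(s+a)}(t+a,x+h)$, hence $(t,x)\mapsto u^{(s+a)}(t+a,x+h)\eqlaw(t,x)\mapsto u^{(s)}(t,x)$, and one passes to the limit $s\to-\infty$.

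\emph{From $L^p$ to uniform convergence on compacts.} For the last assertion, fix $t$ and a compact $\mfK$. The Duhamel formula, the uniform moment bounds, and the smoothing of $P$ give a spatial Hölder estimate $\|u^{(s)}(t,x)-u^{(s)}(t,x')\|_q\lesssim|x-x'|^\gamma$ for some $\gamma\in(0,1)$ and every fixed $q$, uniformly in $s$, and likewise for $\vec Z$. Interpolating this against the pointwise bound $\sup_x\|u^{(s)}(t,x)-\vec Z(t,x)\|_q\lesssim(t-s)^{(2-\kappa)/4}$ and feeding the outcome into the Garsia--Rodemich--Rumsey / Kolmogorov inequality on $\mfK$ bounds $\E\sup_{x\in\mfK}|u^{(s)}(t,x)-\vec Z(t,x)|^p$ by a fixed positive power of $\sup_x\|u^{(s)}(t,x)-\vec Z(t,x)\|_q$, which vanishes as $s\to-\infty$ provided $q>p$ and $\beta$ is small accordingly.

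\emph{Main obstacle.} The delicate point is the uniformity in $x$ of the rate $(t-s)^{(2-\kappa)/4}$: one has to argue that the loss of memory of the flat initial datum is governed by the single time scale $t-s$ irrespective of the observation point, which forces one to track the spatial correlations of $u^{(s')}(s,\cdot)-1$ at all scales and combine them with the heat-kernel spreading exactly through $\int_{t-s}^\infty r^{-\kappa/2}\,dr$. Everything else---finiteness of moments, the hypercontractive upgrade, the passage to suprema---is comparatively soft.
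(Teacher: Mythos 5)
The paper does not actually prove this statement: Theorem~\ref{thm:stat_field} is quoted verbatim from \cite[Thm~2.10]{GHL23_arxiv_preprint}, so there is no in-paper proof to compare against. That said, your proposal is sound and is essentially the standard argument for such stationarity results; its two key ingredients are exactly the estimates that the present paper does establish elsewhere (the moment bound \eqref{e:bound_w} on the propagator $w_{s,y}$ and the spatial decorrelation $\cov(u(s,z),u(s,z'))\lesssim 1\wedge|z-z'|^{2-\kappa}$, cf.\ \eqref{eq:control_space} and \eqref{eq:g_cov_space_estimate}), and your propagator decomposition $u^{(s')}(t,x)-u^{(s)}(t,x)=\int w_{s,z}(t,x)(u^{(s')}(s,z)-1)\,dz$ together with the independence of the two factors does yield the variance bound $(t-s)^{(2-\kappa)/2}$, hence the stated $L^2$ rate $(t-s)^{(2-\kappa)/4}$. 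One step is stated more loosely than it deserves: the chaos-level claim $\|J_n^{(s)}-J_n^{(\infty)}\|_2^2\lesssim C^n(t-s)^{(2-\kappa)/2}$ does not follow from bounding only the earliest time increment (that produces a factor $(1+r_{k+1}-s)^{(2-\kappa)/2}$ attached to an interior time variable, not to $t-s$); one must propagate this decay through the remaining time integrations via a convolution estimate of the type \eqref{e:alpha_beta_decay}, which works precisely because $\kappa>2$. Alternatively, you could avoid the chaos expansion altogether by running your propagator decomposition directly in $L^p$, conditioning on $\F_s$ and using the $L^p$ bound on $w$. Neither point is a genuine gap.
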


It follows from \eqref{eq:Lp_cvg_rate} that $\vec Z$ is a random fixed point for the random
dynamical system generated by the stochastic heat equation \eqref{eq:basic_linear_mSHE}.
In fact, we have even more structure as we will describe now.
Our probability space admits  a natural ``time reversal'' involution $\CR \colon \Omega \to \Omega$  and 
``space-time translations'' $\CT_{t,x} \colon \Omega \to \Omega$ such that, formally
\begin{equ}
\bigl(\CR \xi\bigr)(t,x) = \xi(-t,x)\;, \qquad
\bigl(\CT_{t,x} \xi\bigr)(s,y) = \xi(s+t,x+y)\;,
\end{equ}
with the obvious rigorous interpretations in terms of test functions. 
These operations naturally yield linear operators acting on the space of random variables 
$A \colon \Omega \to \R$ by 
\begin{equ}[e:opRandom]
(\CR A)(\xi) \eqdef A(\CR\xi)\;,\qquad
(\CT_{t,x} A)(\xi) \eqdef A(\CT_{t,x}\xi)\;.
\end{equ}
Since the law of $\xi$ is invariant under $\CR$ and $\CT_{t,x}$, the operations in
\eqref{e:opRandom} are isometries for every $L^p(\Omega,\mathbb{P})$-space.

We will repeatedly use the fact that, by uniqueness of solutions to \eqref{eq:basic_linear_mSHE}, 
one has the following.
\begin{lemma}\label{lem:RDS}  
For any $t\in \R$, $x\in \R^d$, one has $\CT_{t,x} \CR  = \CR \CT_{-t,x} $ and the following holds almost surely:
\pushQED{\qed}
\begin{align*}
u^{(s)}(t,x)(\CT_{0,y}\xi)&=u^{(s)}(t, x+y)(\xi) \;,\\ %u(t,x)(\CT_{-y}\xi)&=u(t, x-y)(\xi),\\
u^{(s)}(t,x)(\CT_{r,0}\xi)&=u^{(s+r)}(t+r, x)(\xi)\;. \qedhere
\end{align*}
\popQED
\end{lemma}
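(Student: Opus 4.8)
The plan is to deduce both covariance identities from uniqueness of solutions to \eqref{eq:basic_linear_mSHE}, which we may assume, while the algebraic identity $\CT_{t,x}\CR = \CR\CT_{-t,x}$ needs only a direct computation. For the latter I would evaluate both operators on an arbitrary $\xi$: formally $(\CT_{t,x}\CR\xi)(s,y) = (\CR\xi)(s+t,y+x) = \xi(-s-t,y+x)$, whereas $(\CR\CT_{-t,x}\xi)(s,y) = (\CT_{-t,x}\xi)(-s,y) = \xi(-s-t,y+x)$; these coincide, and the same one-line computation carried out against test functions gives the rigorous statement.

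For the two covariance identities, the idea is, in each case, to show that the right-hand side — viewed as a random field in $(t,x)$ on the appropriate time interval — solves the mild (Duhamel) formulation of \eqref{eq:basic_linear_mSHE} driven by the shifted noise appearing on the left-hand side, started from $1$ at time $s$; uniqueness then concludes. For the spatial translation I would take the Duhamel equation satisfied by $u^{(s)}(\cdot,\cdot)(\xi)$, replace $x$ by $x+y$, and substitute $z \mapsto z+y$ in the spatial integral: the kernel $P_{t-r}(x+y-z)$ becomes $P_{t-r}(x-z)$, the factor $u^{(s)}(r,z)(\xi)$ becomes $u^{(s)}(r,z+y)(\xi)$, and the integrator $\xi(dr,dz)$ gets replaced by its spatial translate, which is exactly $(\CT_{0,y}\xi)(dr,dz)$; one then reads off the Duhamel equation characterising $u^{(s)}(\cdot,\cdot)(\CT_{0,y}\xi)$. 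The time-translation identity is handled symmetrically: start from the Duhamel equation for $u^{(s+r)}(\cdot,\cdot)(\xi)$ on $[s+r,\infty)$, evaluate it at time $t+r$, and substitute $\rho \mapsto \rho+r$ in the time integral, which turns the integrator into $(\CT_{r,0}\xi)(d\rho,dz)$ and yields the Duhamel equation for $u^{(s)}(\cdot,\cdot)(\CT_{r,0}\xi)$ on $[s,\infty)$. In both cases uniqueness gives the claimed identity.

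The only step that is not mere bookkeeping — and hence the part I would be most careful about — is the change of variables inside the stochastic integral, i.e.\ the identities $\int f(r,z)\,\xi(dr,dz+y) = \int f(r,z)\,(\CT_{0,y}\xi)(dr,dz)$ and its time-shifted counterpart. These I would verify at the level of the Walsh/Itô integral against the martingale measure of $\xi$: since the spatial covariance $R$ and the white-in-time structure of $\xi$ are invariant under $\CT_{t,x}$, the translated noise is again an admissible integrator with the same covariance, so the two integrals agree in $L^2(\Omega)$ for simple integrands and then, by the Itô isometry, for the integrands arising in the Duhamel formula. Finally, to upgrade the resulting fixed-$(t,x)$ almost sure identities to the ``almost surely'' in the statement, I would invoke that the solutions provided by \cite[Thm~0.2]{peszat2000nonlinear} have jointly continuous versions in $(t,x)$, so that equality on a countable dense set of $(t,x)$ propagates to all $(t,x)$ simultaneously.
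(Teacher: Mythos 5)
Your proof is correct and follows exactly the route the paper indicates: the lemma is stated there without a written proof, justified solely by the remark that it follows ``by uniqueness of solutions to \eqref{eq:basic_linear_mSHE}'', and your argument --- checking the commutation relation directly and showing that the translated field satisfies the Duhamel equation driven by the translated noise before invoking uniqueness --- is precisely the intended elaboration of that remark. The extra care you take with the change of variables inside the Walsh/It\^o integral and with upgrading the pointwise almost sure identities via joint continuity is appropriate, though the paper does not spell either point out.
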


Given any random variable $A$ as above,
we then define the stationary random fields
\begin{equ}\label{stationary-field}
\vec A(t,x) = \CT_{t,x} A\;,\quad
\cev A(t,x) = \CT_{t,x}\CR A\;.
\end{equ}
If we now define
\begin{equ}[e:defZs]
Z = \lim_{s\to -\infty} Z_s\;,\qquad Z_s = u^{(s)}(0,0)\;,
\end{equ}
then we see that this
definition is consistent with our notation since by Lemma~\ref{lem:RDS} $\vec Z(t,x):=\CT_{t,x} Z$,   defined as 
in \eqref{stationary-field}, satisfies
\begin{equs}\label{vec-Z}
\vec Z(t,x) &= \lim_{s\to-\infty} u^{(s)}(0,0)(\CT_{t,x}\xi)
= \lim_{s\to-\infty} u^{(t+s)}(t,x)(\xi)\\
&= \lim_{s\to -\infty} u^{(s)}(t,x)\;,
\end{equs}
which is indeed the same quantity appearing in (\ref{eq:Lp_cvg_rate}), as well as in 
\eqref{e:mainHomog}, by virtue of \eqref{e:vecZ}.
With the notation above
\begin{equ}\label{cev-Z}
\cev Z(s,y)=\CT_{s,y} \CR Z = \CR \CT_{-s,y} Z= \CR \, \vec Z(-s, y)\;,
\end{equ}
so that
\begin{equ}
\cev Z(s,y)(\xi)=\lim_{r\to -\infty}u^{(r)}(-s,y)(\CR \xi)\;,
\end{equ}
which is indeed consistent with \eqref{e:cevZ} via the change of variables $r \mapsto -r$
(and noting that such a change of variables does indeed turn Itô integrals into backwards Itô integrals by
simple Riemann sum approximations).

Let us now explain why $\cev Z$ appears in \eqref{e:mainHomog}. If it were the case that 
\begin{equ}
w_{s,y}(t,x) = P_{t-s}(y-x) \vec Z(t,x)\cev Z(s,y)\;,
\end{equ}
then, as a consequence of the fact that $\E \vec Z = 1$ and the law of large number 
we could recover $\cev Z$ by 
\begin{equ}
\cev Z(s,y) = \lim_{t \to \infty}  \int_{\R^d} w_{s,y}(t,x)\,dx\;.
\end{equ}
It is therefore natural to consider the stochastic process
\begin{equ}[e:defn_Csy]
C_{s,y}(t) = \int_{\R^d} w_{s,y}(t,x)\,dx\;.
\end{equ}
As we will see shortly, it is then indeed the case that one has the identity 
$\cev Z(s,y) = \lim_{t \to \infty} C_{s,y}(t)$.
In fact we will show more, namely that with $Z_s$ as in \eqref{e:defZs}, one has the exact identity
$C_{s,y}(t) = \cev Z_{s-t}(s,y)$.

%The basic idea is that if $u$ denotes the SHE running forward in time
%with initial condition  $u_s$ at some time $s$
%and $v$ denotes the one running backwards in time with terminal condition $v_t$ at some time 
%$t > s$, then $\<u_r,v_r\>$ is independent of $r$. 

\begin{proposition} \label{prop:C_Z_relation}
Under the conditions of \Cref{thm:stat_field},
one has $C_{s,y}(t) =\cev Z_{s-t}(s,y)$, 
and
$$\|C_{s,y}(t)-\cev Z(s,y)\|_{p}\lesssim 1 \wedge (t-s)^{\frac {2-\kappa} 4}.$$
\end{proposition}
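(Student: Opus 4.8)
The plan is to first establish the exact identity $C_{s,y}(t) = \cev Z_{s-t}(s,y)$ and then to read off the quantitative estimate almost for free. Starting from the mild formulation of the fundamental solution,
\begin{equ}
w_{s,y}(t,x) = P_{t-s}(y-x) + \beta \int_s^t \int_{\R^d} P_{t-r}(x-z)\, w_{s,y}(r,z)\, \xi(dr,dz)\;,
\end{equ}
one integrates over the terminal variable $x$. Since $\int_{\R^d} P_{t-r}(x-z)\,dx = 1$, the a priori $L^p$ bounds on $w$ collected in \Cref{sec:stationary} (which make $C_{s,y}(t)$ a genuine $L^p(\Omega)$ random variable and justify a stochastic Fubini argument) yield $C_{s,y}(t) = 1 + \beta \int_s^t\int_{\R^d} w_{s,y}(r,z)\,\xi(dr,dz)$. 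Conceptually, $C_{s,y}(t)$ is the value at $(s,y)$ of the image of the constant datum $\mathbf 1$ under the \emph{adjoint} of the random propagator of \eqref{eq:basic_linear_mSHE} between times $s$ and $t$. This adjoint propagator is the flow of the backward stochastic heat equation driven by $\xi$, which, under the time-reversal involution $\CR$, is again a forward stochastic heat equation, now driven by $\CR\xi$ and run from time $-t$ with constant initial datum $\mathbf 1$; one therefore expects $C_{s,y}(t)(\xi) = u^{(-t)}(-s,y)(\CR\xi)$.

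To make this adjoint/time-reversal identification rigorous, I would work with the Euler (time-discretised) approximations of \eqref{eq:basic_linear_mSHE}: for these the propagator is an explicit alternating product of heat semigroup operators and diagonal multiplication operators built from the noise increments, ``integrating over the terminal variable'' literally transposes this product, and reversing the order of the factors is precisely the Euler scheme for the $\CR\xi$-driven equation. Passing to the limit using the $L^p$-convergence of the scheme gives the displayed identity; this is the same Riemann-sum mechanism which turns Itô integrals into backward Itô integrals mentioned before the statement. (Alternatively, one verifies directly that, for fixed $t$, the map $(s,y)\mapsto C_{s,y}(t)$ solves the backward mild equation $C_{s,y}(t) = 1 + \beta\int_s^t\int_{\R^d} P_{r-s}(y-z)\,C_{r,z}(t)\,\xi(dr,dz)$ with a backward Itô integral, and identifies its solution with $u^{(-t)}(-s,y)(\CR\xi)$ via the change of variables $r\mapsto -r$ and uniqueness of solutions.) It then remains to rewrite the right-hand side: combining $\CR\CT_{s,y} = \CT_{-s,y}\CR$ with the translation identities of \Cref{lem:RDS}, the definition \eqref{stationary-field}, and $Z_{s-t} = u^{(s-t)}(0,0)$, one gets
\begin{equ}
u^{(-t)}(-s,y)(\CR\xi) = u^{(s-t)}(0,0)(\CT_{-s,y}\CR\xi) = (\CT_{s,y}\CR Z_{s-t})(\xi) = \cev Z_{s-t}(s,y)\;,
\end{equ}
which is the asserted identity.

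Granting it, the estimate follows quickly. Set $a = s-t$, so that $C_{s,y}(t) = \cev Z_{a}(s,y) = \CT_{s,y}\CR Z_{a}$ with $Z_a = u^{(a)}(0,0)$. By \Cref{thm:stat_field}, $Z = \lim_{a\to-\infty} Z_a$ in $L^p(\Omega)$, and since $\CT_{s,y}$ and $\CR$ are $L^p(\Omega)$-isometries one has $\cev Z(s,y) = \CT_{s,y}\CR Z$, whence
\begin{equ}
\|C_{s,y}(t) - \cev Z(s,y)\|_p = \|\CT_{s,y}\CR(Z_a - Z)\|_p = \|Z_a - Z\|_p = \|u^{(a)}(0,0) - \vec Z(0,0)\|_p \lesssim 1\wedge(-a)^{\frac{2-\kappa}{4}}\;,
\end{equ}
the last bound being \eqref{eq:Lp_cvg_rate} evaluated at $(t,x)=(0,0)$ with initial time $a$; substituting $-a = t-s$ proves the claim. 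The genuinely delicate point is the exact identity $C_{s,y}(t) = \cev Z_{s-t}(s,y)$ — specifically, the rigorous justification that integrating the forward propagator over its terminal point reproduces the time-reversed forward flow — and I expect the Euler-discretisation argument above to be the cleanest route through the stochastic calculus hidden in that step.
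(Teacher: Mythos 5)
Your proposal is correct, and the quantitative half of your argument (writing $C_{s,y}(t)=\cev Z_{s-t}(s,y)=\CT_{s,y}\CR Z_{s-t}$, using that $\CT_{s,y}$ and $\CR$ are $L^p(\Omega)$-isometries, and invoking \eqref{eq:Lp_cvg_rate}) is exactly what the paper does. Where you genuinely diverge is in the justification of the exact identity $C_{s,y}(t)=\cev Z_{s-t}(s,y)$, and you correctly single this out as the delicate point. The paper proves it by a forward/backward duality for \emph{mollified} noise: it introduces $\xi_\delta$, solves the renormalised forward equation \eqref{e:defudelta} from $\delta_y$ and the backward equation \eqref{e:defvdelta} from terminal datum $1$, observes that $\scal{u^\delta_r,v^\delta_r}$ is $r$-independent, and then passes to the limit $\delta\to 0$ using regularity-structures convergence results to identify the limits as $w_{s,y}$ and $\CR u^{(-t)}(-\cdot,\cdot)$ respectively. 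You instead propose an Euler time-discretisation in which the propagator is an explicit alternating product of heat-semigroup and multiplication operators, so that integrating out the terminal variable literally transposes the product and reversing the factors gives the scheme for the $\CR\xi$-driven equation; your parenthetical alternative (showing $(s,y)\mapsto C_{s,y}(t)$ solves the backward mild equation and invoking uniqueness) is also viable but, as you seem to sense, essentially restates the duality rather than proving it. Both routes are legitimate: the paper's buys the identity from already-available convergence theorems at the price of renormalisation constants $K_\delta$ and heavier machinery, while yours is more elementary and self-contained in spirit but would require you to actually prove $L^p$-convergence of the discretised scheme (and of its transpose) to the Itô solutions, which is standard for white-in-time noise but not free. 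A still cleaner shortcut along your lines is to compare Wiener chaos expansions: the $n$-th chaos of $w_{s,y}(t,x)$ is manifestly symmetric under exchanging $(s,y)\leftrightarrow(t,x)$ together with time reversal, and integrating out $x$ then gives the identity term by term.
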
 

\begin{proof}
By the definition (\ref{stationary-field}), $\cev Z_{s-t}(s,y):=\CT_{s,y} \CR  Z_{s-t}=\CR \CT_{-s,y}  u^{(s-t)}(0,0)$.
By Lemma~\ref {lem:RDS},
\begin{equation}\label{cev-Z}
\cev Z_{s-t}(s,y)=\CR u^{(-t)}(-s,y), \qquad \lim_{t\to \infty} \cev{Z}_{s-t}(s,y)=\cev Z(s,y).
\end{equation}

We shall use the method of duality. The basic observation is that if $u$ denotes the SHE running forward in time
with initial condition  $u_s$ at some time $s$
and $v$ denotes the one running backwards in time with terminal condition $v_t$ at some time 
$t > s$ then $\scal{u_r, v_r}$, where the bracket denotes the inner product on $L^2(\R^d)$, is independent of 
$r \in [s,t]$. Formally, this is because
\begin{equ}[e:formal]
\d_r \scal{u_r, v_r}
= \scal{\Delta u_r + \xi u_r, v_r} - \scal{u_r, \Delta v_r + \xi v_r} = 0\;.
\end{equ}

%\martin{Can be justified by mollifying in time and taking limits.} \luca{Mollify in time noise $\xi_\delta$ and use (Feynman-Kac) solutions $u_\delta \to u$ and $v_\delta \to v$ in $L^p$? E.g. \cite[Thm.~3.6]{hu_huang_nualart_tinder2015_mSHE_general} }

One can make this precise in the following way. 
Fix a mollifier $\rho \in \CC_c^\infty(\R \times \R^d)$ such that $\int \rho = 1$ and
$\rho(t,x) = \rho(-t,x) = \rho(t,-x)$. For any $\delta > 0$, we write $\rho_\delta(t,x) = \delta^{-2-d}\rho(t/\delta^2,x/\delta)$ and we set $\xi_\delta(t,x) = (\CT_{t,x}\xi)(\rho_\delta)$.

Fix then some initial time $s$ and constant $K_\delta$ and consider the solution $u^\delta(r,x)$ to the random PDE
\begin{equ}[e:defudelta]
\d_r u^\delta = \ff12 \Delta u^\delta + \beta u^\delta \xi_\delta - K_\delta u^\delta\;,\quad
u^\delta(s,\cdot) = \delta_y\;.
\end{equ}
We also fix some final time $t$ and consider the solution $v^\delta$ to the random \textit{backwards} PDE 
\begin{equ}[e:defvdelta]
\d_r v^\delta = -\ff12 \Delta v^\delta - \beta v^\delta \xi_\delta + K_\delta v^\delta\;,\quad
v^\delta(t,\cdot) = 1\;.
\end{equ}
At this stage we note that if we set $\tilde v^\delta(r,x) = \CR v^\delta(-r,x)$, then 
$\tilde v^\delta$ satisfies the \textit{forwards} PDE
\begin{equ}[e:defvtilde]
\d_r\tilde v^\delta = \ff12 \Delta \tilde v^\delta + \beta \tilde v^\delta \xi_\delta - K_\delta \tilde v^\delta\;,\quad
\tilde v^\delta(-t,\cdot) = 1\;.
\end{equ}
At this point we note that \eqref{e:defudelta} fits into the setting of \cite[Thm~5.2]{Cyril}
(in fact, the regularity of our limiting noise $\xi$ is better than what is considered there so some
of the arguments could be simplified). Combining this with \cite{Rhys} which shows that the BPHZ
lift of $\xi^\delta$ to the regularity structure of \cite[Sec.~2.1]{Cyril} converges as $\delta \to 0$,
we conclude that, provided that the constants $K_\delta$ are chosen appropriately, $u^\delta$
converges as $\delta \to 0$ to the Green's function $w_{s,y}$. This convergence takes place in
an exponentially weighted space that enforces rapid decay of the solution. 

By the exact same argument, one finds that $\tilde v^\delta$ converges to $u^{(-t)}$ as 
$\delta \to 0$, this time in a weighted space that allows for some slow growth of the solutions. 
In particular, this shows that $v^\delta_r$ converges to 
\begin{equ}
v_r(x) = \CR u^{(-t)}(-r,x) 
% = \CR \,  \CT_{-r,x} \,u^{(r-t)} (0, 0) = \CT_{r,x} \, \CR \,u^{(r-t)} (0, 0) 
 = \cev Z_{r-t}(r, x)\;. 
\end{equ}
We conclude that, for any $r \in [s,t]$, one has
\begin{equ}
\lim_{\delta \to 0} \scal{u_r^\delta,v_r^\delta} = \int_{\R^d} w_{s,y}(r,x) \cev Z_{r-t}(r, x)\,dx\;.
\end{equ}
On the other hand, it follows immediately from \eqref{e:defudelta} and \eqref{e:defvdelta} that
$\scal{u_r^\delta,v_r^\delta}$ is independent of $r$, as in the formal calculation \eqref{e:formal}.
Since $Z_0 = 1$ and $\cev Z_0(t, \cdot)=1$, it follows that 
\begin{equ}
\cev Z_{s-t}(s, y)
= \int_{\R^d} w_{s,y}(t,x) \cev Z_{0}(t, x)\,dx
= C_{s,y}(t)\;,
\end{equ}
which is precisely as claimed. The rate of convergence follows from (\ref{cev-Z}) and \Cref{thm:stat_field}.
\end{proof}

\subsection{A priori bounds}

Our main goal in this subsection is to establish $L^p$ bounds on the Malliavin derivative of the random variable $u$
as well as on $\Phi(u)$. We write $\cH$ for the reproducing kernel space of the noise $\xi$, 
which is formed by completing the space $\Ccinf(\R \times \R^d)$ under the inner product 
\begin{equation}\label{Hilbert}
	\langle f , g  \rangle_\cH = \int_{-\infty}^{\infty} \int_{\R^{2d}} f(s, y_1) g(s, y_2) R(y_1-y_2) \, dy_1\,  dy_2\,  ds\;, 
\end{equation}
after quotienting out zero norm elements.
The family of random variables $$\xi(f)=\int f(s,y) \, \xi(ds, dy)\;,\qquad f\in \cH\;,$$
defines an isonormal Gaussian process on $\cH$: $\E [  \xi(f) \xi(g)] =  \langle f , g  \rangle_\cH $. 
Denote by $D$ the Malliavin derivative $D$ of a random variable, $D(\xi(f))=f$. Let $ \C_p^\infty(\R^n)$ denote the space of smooth functions $F \in  \C^\infty(\R^n)$ with $F$ and its partial derivatives growing at most polynomially. 
Denoting by $\cS$ the class of smooth random variables of the form $F(\xi(f_1), \dots, \xi(f_n))$, we set 
\begin{equ}
	D F(\xi(f_1), \dots, \xi(f_n))=\sum_{i=1}^n \partial _i F(\xi(f_1), \dots, \xi(f_n)) f_i.
\end{equ}
The  Malliavin derivative operator $D: L^p(\Omega)\to L^p(\Omega; \cH)$, with initial domain $\cS$ is closable for any $p> 1$. 
We define $\bD^{1,p} $ as the completion of smooth random variables $\cS$ under the norm
\begin{equ}
	\| X \|^p_{\bD^{1,p}} \eqdef 
	\E \big( |X|^p \big)
	+ \E \big( \| DX\|_\cH^p  \big).  
\end{equ} 
In all the cases we consider, our random variables $X$ will be such that their Malliavin derivatives $DX$
have natural function-valued representatives, and we
denote their pointwise evaluations by $D_{t, x} X$. For example, for a deterministic 
 function $h$, $D_{r,z}  \xi(h) = h(r,z)$.
The $L^2$ adjoint of $D$ is denoted by $\delta$ and is known as the divergence operator or Skorokhod integral.
It is shown in \cite[Prop.~1.5.4]{nualart2006malliavin} that, for every $p \ge 2$, $\delta$ extends to a bounded operator
$\delta: \bD^{1,p}(\cH) \to L^p(\Omega)$. For a deterministic $f\in \cH$, we have $\delta(f) = \xi(f)$ and  $D (\xi(f))=f$. If  $w \in \dom \, \delta$ is $\F_t$-adapted, $\delta(w)$ coincides with the It\^o integral. We refer to e.g.\ \cite{nualart2006malliavin} for more details on Malliavin calculus. 

%We shall consider solutions $w_{s,y}(t,x)$ of SHE with initial condition
%$\delta_y$ at time $s$,
%\begin{equ}[e:defn_w]
%	w_{s,y}(t,x) = P_{t-s}(x-y) + \beta \int_s^t P_{t-r}(x-z)w_{s,y}(r,z)\,\xi(dr,dz)\;,
%\end{equ}
%and the total mass of the process $w$ at time $t$:
%\begin{equ} [e:defn_Csy]
%	C_{s,y}(t) = \int_{\R^d} w_{s,y}(t,x)\,dx\;.
%\end{equ}

Before stating the lemma, we recall some covariance estimates -- used for the a priori bounds on the moments below and Theorems~\ref{thm:main-theorem-KPZ}-\ref{theo:homogenisation} in next sections.

\begin{lemma} \label{lem:gaussian_cov_est}
If $R(x) \lesssim (1 + |x|^\kappa)^{-1}$, where  $\kappa \in (2, d)$, then
uniformly in $t > 0$ and in $x \in \R^d$, %any $\lambda < \kappa$, 	
	\begin{equs}
		\int_{\R^{d}} P_t(x - z)  R(z) \, dz &\lesssim (1 + |x| + \sqrt{t})^{- \kappa},\qquad \label{eq:time_decay_est} \\
		\int_0^\infty \int_{\R^{d}} P_{r}(z-x) R(z) \,dr \,dz
		& \lesssim 1 \wedge |x|^{2-\kappa}, \qquad
		\label{eq:control_space}\\
		\int_0^t \int_{\R^{2d}}  P_r(z_1-x) P_{t-r}(z_2) R(z_1 -z_2)\,dr \,dz_1 \, dz_2 &\lesssim  t(1+|x|+\sqrt t)^{-\kappa}. \qquad
			\label{eq:both_ends_cov}
	\end{equs} 
\end{lemma}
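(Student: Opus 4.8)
These are standard estimates expressing that convolution against the heat kernel $P_t$, or against the resolvent kernel $\int_0^\infty P_r\,dr$, smooths out the polynomially decaying weight $R$ while preserving its decay rate at the natural spatial scale. The plan is to prove \eqref{eq:time_decay_est} first, deduce \eqref{eq:both_ends_cov} from it via the heat semigroup property, and treat \eqref{eq:control_space} by a similar weighted-convolution argument applied to the resolvent kernel $\int_0^\infty P_r\,dr = c_d\,|\cdot|^{2-d}$.

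For \eqref{eq:time_decay_est}, write $\ell = 1 + |x| + \sqrt t$. Since $R$ is bounded and $\int_{\R^d}P_t = 1$, the left-hand side is $\lesssim 1$, so it suffices to prove it is $\lesssim \ell^{-\kappa}$, and we may assume $\ell$ large. Split the $z$-integral into $\{|z| \geq \ell/4\}$, where $R(z) \lesssim |z|^{-\kappa} \lesssim \ell^{-\kappa}$ and $\int P_t = 1$ suffices, and $\{|z| < \ell/4\}$, where one uses $R \leq \|R\|_\infty$ together with $\int_{|z|<\ell/4}R(z)\,dz \lesssim \ell^{d-\kappa}$ (here $\kappa < d$ enters). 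On the latter region I distinguish: if $|x-z| \geq \ell/4$ then $P_t(x-z) \leq (2\pi t)^{-d/2}e^{-\ell^2/(32t)}$ and, since $(\ell^2/t)^{d/2}e^{-\ell^2/(32t)}$ is bounded, the contribution is $\lesssim \ell^{-\kappa}$; if instead $|x-z| < \ell/4$ then $|x| < \ell/2$, forcing $\sqrt t \gtrsim \ell$, so $(2\pi t)^{-d/2} \lesssim \ell^{-d}$ and the contribution is again $\lesssim \ell^{-d}\cdot\ell^{d-\kappa} = \ell^{-\kappa}$.

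For \eqref{eq:control_space}, by Tonelli and the identity $\int_0^\infty P_r(z-x)\,dr = c_d\,|z-x|^{2-d}$ (finite because $d \geq 3$), the left-hand side is $\lesssim J(x) := \int_{\R^d}(1+|z|^\kappa)^{-1}|z-x|^{2-d}\,dz$, and it suffices to show $J(x) \lesssim (1+|x|)^{2-\kappa}$, which is comparable to $1\wedge|x|^{2-\kappa}$. For $|x|$ bounded I split into $\{|z-x| < 1\}$ (integrable since $2-d > -d$), a bounded annulus, and the region where $|z|$ is large, in which $|z-x| \gtrsim |z|$ and $\int|z|^{2-d-\kappa}\,dz < \infty$ because $\kappa > 2$. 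For $|x| \geq 2$ I rescale $z = |x|w$ to get $J(x) = |x|^2\int(1+|x|^\kappa|w|^\kappa)^{-1}|w-\hat x|^{2-d}\,dw$ with $\hat x = x/|x|$, and split at $|w| \sim |x|^{-1}$: on $\{|w| \leq |x|^{-1}\}$ bound the weight by $1$ and use $|w-\hat x| \gtrsim 1$ to get $\lesssim |x|^{2-d} \leq |x|^{2-\kappa}$ (using $\kappa \leq d$); on $\{|w| > |x|^{-1}\}$ bound the weight by $|x|^{-\kappa}|w|^{-\kappa}$ and note $\int|w-\hat x|^{2-d}|w|^{-\kappa}\,dw < \infty$, integrability near $0$ using $\kappa < d$, near $\hat x$ using $2-d > -d$, and near infinity using $\kappa > 2$; this gives $\lesssim |x|^{2-\kappa}$.

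Finally, for \eqref{eq:both_ends_cov} I fix $r \in [0,t]$, substitute $w = z_1 - z_2$, and use the Chapman--Kolmogorov identity $\int_{\R^d}P_r(z_1-x)P_{t-r}(z_1-w)\,dz_1 = (P_r * P_{t-r})(x-w) = P_t(x-w)$ to collapse the double space integral, for each $r$, to $\int_{\R^d}P_t(x-w)R(w)\,dw \lesssim (1+|x|+\sqrt t)^{-\kappa}$ by \eqref{eq:time_decay_est}, uniformly in $r$; integrating over $r \in [0,t]$ produces the factor $t$. The only genuinely delicate point is the region bookkeeping in \eqref{eq:time_decay_est} (and similarly in \eqref{eq:control_space}): nothing is deep, but the decomposition must be arranged so that each of the constraints $2 < \kappa < d$ is invoked exactly where needed and the trade-off between the Gaussian tail and the prefactor $(2\pi t)^{-d/2}$ is clean. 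Once \eqref{eq:time_decay_est} is in hand, \eqref{eq:both_ends_cov} is immediate.
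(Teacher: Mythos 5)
Your proposal is correct, and for \eqref{eq:both_ends_cov} it is identical to the paper's argument: substitute $w=z_1-z_2$, collapse the $z_1$-integral via $P_r * P_{t-r}=P_t$, apply \eqref{eq:time_decay_est}, and integrate over $r\in[0,t]$ to get the factor $t$. The difference lies in the first two estimates: the paper simply defers them to the proof of Lemma~2.3 of \cite{GHL23_arxiv_preprint}, whereas you supply self-contained arguments. Your region decompositions are sound — for \eqref{eq:time_decay_est} the three-way split correctly trades the Gaussian tail against the prefactor $t^{-d/2}$ in the regime $|x-z|\ge \ell/4$ and exploits $\sqrt t\gtrsim\ell$ in the regime $|x-z|<\ell/4$, and each of the hypotheses $\kappa<d$ (local integrability of $|w|^{-\kappa}$, the bound $\int_{|z|<\ell}R\lesssim\ell^{d-\kappa}$) and $\kappa>2$ (integrability at infinity of $|w|^{2-d-\kappa}$) is invoked exactly where needed. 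For \eqref{eq:control_space} the reduction to the Newtonian kernel $c_d|{\cdot}|^{2-d}$ and the scaling $z=|x|w$ give a clean proof of the stated decay. So the only thing your write-up "buys" over the paper is self-containedness at the cost of length; there is no gap.
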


\begin{proof}
	The first two estimates follow from the proof of \cite[Lem.~2.3]{GHL23_arxiv_preprint}. By a change of variables, the left hand side of \eqref{eq:both_ends_cov}  equals
	$$\int_0^t \int_{\R^d} P_t(y-x)R(y) \, dr\, dy
	\lesssim t(1+|x|+\sqrt t)^{-\kappa}\;,$$
so that \eqref{eq:both_ends_cov} follows from \eqref{eq:time_decay_est}.
\end{proof}

	%with implicit constants independent of $t$ and $x$. 
%	If $x =0$, \eqref{eq:time_decay_est}-\eqref{eq:both_ends_cov} hold for $\lambda \leq \kappa$.

		%		\int_{\R^{2d}} P_r(z_1-x) P_r(z_2) R(z_1 -z_2)dz_1 dz_2 dr
		%		& \lesssim 1 \wedge \big( r^{-\frac \lambda 2} (1 \wedge |x|^{\lambda - \kappa}) \big),\qquad  \label{eq:time_decay_est}\\
	% (1 + t)^{1 - \f \kappa 2},
	
%	\begin{equs}
%	{}&\int_0^t \int_{\R^{2d}}  P_r(z_1-x) P_{t-r}(z_2) R(z_1 -z_2) \,dz_1 \, dz_2\, dr \\
%			&  \leq 2 \int_0^{\frac t 2 } \int_{\R^d} P_r(z_1-x) \int_{\R^d} P_{t-r}(z_2) R(z_1 -z_2) \, dz_2\, dz_1 \,dr \\
%			&\lesssim  \int_0^{\frac t 2 } (1+t-r)^{-\frac \kappa 2}dr \lesssim (1 + t)^{1 - \f \kappa 2}.
%	\end{equs}
%

In the following we gather moment estimates on $u$, $\vec Z$ and the Green's function $w_{s,y}$
appearing in Theorem~\ref{theo:homogenisation}. Crucially, for sufficiently small $\beta$, the negative moments of $u$ are also uniformly bounded.
%with the proof of \cite{cosco2020renormalizing} holding even when the mollifier $\phi$ is not compactly supported. 
%This has been proved in \cite{cosco2020renormalizing, HuLe_asymptotics_PAM, dunlap2020_KPZ_fluctuations, CSZ20-KPZ-full-subcritical} in the case of compactly supported covariance. 

\begin{lemma}\label{lem:moments} 
	%	Assume that $R$ is positive definite and satisfies \eqref{eq:R_decay_bounded} and \eqref{eq:R_transf_int} from Assumption \ref{assump-cov}.
	Assume that $R$ satisfies Assumption~\ref{assump-cov-KPZ}. For any $p \geq 2$, there exist constants $\beta_0(p)>0$ and $C_p$,
	such that if $\beta \leq \beta_0$, the following holds for the solution of (\ref{eq:basic_linear_mSHE}).
	\begin{enumerate}
		\item\label{i} The solution of (\ref{eq:basic_linear_mSHE}) has positive and negative moments of all orders:
		\begin{equ}
			\sup_{s \geq 0,  x\in \R^d } \| u(s,x)\|_{p}
			\leq C_p\;,\quad
						\sup_{s\geq 0,  x\in \R^d }	\|  u(s,x)^{-1} \|_p \leq C_p\;.
			 \label{eq:moment_u}
		\end{equ}
		\item\label{ii} For all $x \in \R^d$ and $t > 0$, $u(t,x) \in \bD^{1,p}$. The Malliavin derivative of $u$ is given by
		\begin{equ}[e:MallDeru]
			D_{s,y}u(t,x) = \beta u(s,y)w_{s,y}(t,x) \1_{[0,t]}(s) \;,
		\end{equ}	
		and the process $w$ satisfies the pointwise estimate
		\begin{equ}[e:bound_w]
			\| w_{s,y}(t,x)\|_{p} \lesssim P_{t-s}(x-y)\;.
		\end{equ}
		% mass process
		\item\label{iii}The process $C_{s,y}(t)$ defined in \eqref{e:defn_Csy} satisfies $\sup_y\sup_{t>s\geq0}\| C_{s,y}(t) \|_p \lesssim 1$ and 
		%		the following equation 
		%		\begin{equ} [e:Csy_eqn]
			%			C_{s,y}(t)	= 1 + \beta \int_s^t \int_{\R^d} w_{s,y}(r,z) \, \xi(dr,dz)\;.
			%		\end{equ}
		%		Moreover,  it converges in $L^p(\Omega)$ with the following rate:
		\begin{equ}[e:estimate_Csy]
			\| C_{s,y}(t) - C_{s,y}(\tau)  \|_p \lesssim \beta (1 + (t\wedge \tau )- s)^{\frac {2 - \kappa } 4}\;.
		\end{equ}
	\end{enumerate}
	Assertion \ref{i} 
	also holds if we replace $u$ by $\vec Z$, and one has
\begin{equ}[e:MallDerZ] 
D_{r,z} \vec Z(t,x) = \beta \vec Z(r,z) w_{r,z} (t, x)\1_{(-\infty, t]}(r)\;.
	\end{equ}
\end{lemma}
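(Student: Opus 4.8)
The plan is to treat the three assertions in turn, with the core estimates coming from the mild (Duhamel) formulation of \eqref{eq:basic_linear_mSHE}, Gaussian hypercontractivity on a fixed Wiener chaos, and the covariance bounds of Lemma~\ref{lem:gaussian_cov_est}. For assertion \ref{i}, I would write $u(t,x) = 1 + \beta \int_0^t\int_{\R^d} P_{t-s}(x-y) u(s,y)\,\xi(ds,dy)$ and, via the Burkholder--Davis--Gundy inequality for the Skorokhod/Itô integral together with Minkowski's inequality, bound $\|u(t,x)\|_p^2$ by $1 + C\beta^2 \int_0^t\int_{\R^{2d}} P_{t-s}(x-y_1)P_{t-s}(x-y_2) R(y_1-y_2)\,\|u(s,y_1)\|_p\|u(s,y_2)\|_p\,dy\,ds$; feeding in \eqref{eq:both_ends_cov} (which shows the kernel integrates to something $\lesssim t(1+\cdots)^{-\kappa}$, hence is integrable in $s$ uniformly) and running a Gronwall-type argument gives the uniform positive-moment bound for $\beta$ small. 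For negative moments one uses the standard trick: $u>0$ almost surely (e.g.\ by the Feynman--Kac representation or positivity of the mild solution with positive initial datum), so $\|u(t,x)^{-1}\|_p$ is controlled by establishing exponential moments of $-\log u(t,x)$, or more simply by noting $u^{-1}$ itself solves a linear SHE-type equation with a drift whose sign is favourable; I would cite the argument already used for this type of bound in \cite{GHL23_arxiv_preprint}. The same computations with $u^{(s)}(\cdot)\to\vec Z$ as $s\to-\infty$, using Theorem~\ref{thm:stat_field} for the limit, transfer all of assertion \ref{i} to $\vec Z$.

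For assertion \ref{ii}, I would first establish $u(t,x)\in\bD^{1,p}$ by a Picard iteration: the iterates $u_n$ lie in $\bD^{1,p}$, and differentiating the Duhamel formula gives a recursion for $D_{s,y}u_n$ which closes uniformly in $n$ using the moment bounds just proved, so the closedness of $D$ yields the limit. Applying $D_{s,y}$ formally to the mild equation gives $D_{s,y}u(t,x) = \beta P_{t-s}(x-y)u(s,y) + \beta\int_s^t\int_{\R^d} P_{t-r}(x-z) D_{s,y}u(r,z)\,\xi(dr,dz)$, which is exactly the mild form of the linear equation solved by $v(r,z):=D_{s,y}u(r,z)$ with initial condition $\beta u(s,y)\delta_y$ at time $s$; by uniqueness this is $\beta u(s,y) w_{s,y}(t,x)\mathbf 1_{[0,t]}(s)$, giving \eqref{e:MallDeru}. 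The pointwise bound \eqref{e:bound_w} follows again from BDG + Minkowski applied to the Duhamel formula for $w_{s,y}$, using the semigroup property $\int P_{t-r}P_{r-s}=P_{t-s}$ and the Chapman--Kolmogorov bound $P_{t-r}(x-z)P_{r-s}(z-y)\lesssim P_{t-s}(x-y)\cdot(\text{Gaussian in }z)$, then integrating the resulting Grönwall inequality in $r$; the noise contribution is controlled by \eqref{eq:both_ends_cov} exactly as before. The formula \eqref{e:MallDerZ} for $\vec Z$ is obtained by passing to the limit $s\to-\infty$ in \eqref{e:MallDeru} applied to $u^{(s)}$, using Theorem~\ref{thm:stat_field} and the fact that $w_{s,y}$ does not depend on the starting time of the reference solution.

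For assertion \ref{iii}, the uniform bound $\sup\|C_{s,y}(t)\|_p\lesssim 1$ follows by integrating \eqref{e:bound_w} in $x$ (the Gaussian $P_{t-s}$ integrates to $1$) together with Minkowski's integral inequality. For the increment estimate \eqref{e:estimate_Csy}, I would use the identity $C_{s,y}(t)=\cev Z_{s-t}(s,y)$ from Proposition~\ref{prop:C_Z_relation}, so that $C_{s,y}(t)-C_{s,y}(\tau)$ is (up to time reversal) a difference $u^{(r_1)}-u^{(r_2)}$ of two stationary-approximating solutions started at different times; the rate $(1\wedge(t-s)^{(2-\kappa)/4})$ from \eqref{eq:Lp_cvg_rate} in Theorem~\ref{thm:stat_field} applied to both and the triangle inequality give the bound, with the factor $\beta$ coming from tracking the coupling constant through that estimate (the difference vanishes at $\beta=0$ since then $u^{(r)}\equiv 1$). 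Alternatively one can argue directly: $C_{s,y}(t)-C_{s,y}(\tau)=\int_\tau^t \partial_r C_{s,y}(r)\,dr = \beta\int_\tau^t\int_{\R^d}\big(\int_{\R^d}P_{r-r'}\cdots\big)$ written as a single stochastic integral, and bound its $L^p$ norm by BDG using the covariance decay, which produces the $(1+(t\wedge\tau)-s)^{(2-\kappa)/4}$ factor from \eqref{eq:time_decay_est}.

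\textbf{Main obstacle.} The delicate point is the uniform-in-time control: the naive Grönwall argument on $\|u(t,x)\|_p$ would give exponential-in-$t$ growth, so the crux is to exploit that the space-time covariance kernel $\int P_{t-s}(\cdot)P_{t-s}(\cdot)R$ is \emph{integrable in $s$ over $[0,\infty)$} (this is precisely where $\kappa>2$ enters, via \eqref{eq:both_ends_cov}/\eqref{eq:control_space}) and to run the fixed-point/Grönwall argument in a way that keeps the constant finite uniformly in $t$; the same integrability is what makes the smallness condition on $\beta$ (rather than just $\beta<\infty$) necessary and is what underlies the negative-moment bound as well. Everything else is a routine, if somewhat lengthy, combination of BDG, hypercontractivity, and the Chapman--Kolmogorov/convolution estimates for the heat kernel.
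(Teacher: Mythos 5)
Your overall route coincides with the paper's: positive moments via the mild formulation, BDG plus Minkowski, and the time-integrability of the kernel $\int P\,P\,R$ guaranteed by $\kappa>2$ (the paper simply cites \cite[Lem.~2.8]{GHL23_arxiv_preprint} for this); the formula \eqref{e:MallDeru} by Malliavin-differentiating the Duhamel formula and identifying the resulting linear equation as the one started from $\beta u(s,\cdot)\delta_y$; the bound \eqref{e:bound_w} by Picard iteration; and assertion (iii) via the identity $C_{s,y}(t)=\cev Z_{s-t}(s,y)$ of Proposition~\ref{prop:C_Z_relation} combined with the rate \eqref{eq:Lp_cvg_rate} of Theorem~\ref{thm:stat_field}. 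Your direct stochastic-integral argument for \eqref{e:estimate_Csy} is a legitimate variant that has the small advantage of producing the explicit factor $\beta$, which the paper leaves implicit.

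The one genuinely non-routine ingredient is the negative moment bound, and there your proposal is too casual. The shortcut that ``$u^{-1}$ solves a linear SHE-type equation with a favourable drift'' does not work: by It\^o's formula one gets
$\partial_t(u^{-1}) = \tfrac12\Delta(u^{-1}) - u^{-3}|\nabla u|^2 - \beta u^{-1}\xi + \beta^2 R(0)\,u^{-1}$,
so besides the favourable term $-u^{-3}|\nabla u|^2$ (which is not a function of $u^{-1}$ alone, so the equation does not close) there is the It\^o correction $+\beta^2 R(0)u^{-1}$ with the unfavourable sign, which a Gronwall argument would turn into exponential growth. Your first alternative --- exponential moments of $-\log u$, i.e.\ sub-Gaussian left tails for $\log u(t,x)$ --- is the correct one, but it is not available off the shelf from \cite{GHL23_arxiv_preprint}: the paper imports it from \cite[Thm.~1.3]{cosco2020renormalizing} and must verify that the argument there survives a non-compactly-supported, non-integrable covariance; the verification reduces to the uniform $L^2$ bound on $u$, obtained through the Feynman--Kac representation of the second moment. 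Without filling in this tail estimate, the bound on $\|u^{-1}\|_p$ in assertion (i) --- and everything downstream that uses negative moments --- remains unproven.
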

\begin{proof}
%	Given Lemma~\ref{lem:gaussian_cov_est}, the first two assertions follow from the proof of Lemma~2.8 in \cite{GHL23_arxiv_preprint}. Concerning the Malliavin derivative representation, we note that by the mild solution formulation we have
%	$u(t,x)=1+ \beta \delta( v_{t,x})$, where 
%	\begin{equ}
%		v_{t,x}(r,z)  =  P_{{t} -r } ( x -z) u(r,z) \1_{[0,t]}(r)\;.
%	\end{equ}
%	Then, for $s < t$  the equality \eqref{e:MallDeru} follows since we have  
%	\begin{equ}
%		D_{s,y} u(t,x) = \beta P_{{t} -s } ( x -y) u(s,y) + \beta \int_s^t \int_{\R^d}   P_{{t} -r } ( x -z)  D_{s,y} u(r,z) \, \xi(dr, dz)\;. 
%	\end{equ}

The positive moment bounds in \eqref{eq:moment_u} follow from \cite[Lem.~2.8]{GHL23_arxiv_preprint}. Concerning the Malliavin derivative, we note that by the mild solution formulation we have
$u(t,x)=1+ \beta \delta( v_{t,x})$, where 
\begin{equ}
	v_{t,x}(r,z)  =  P_{{t} -r } ( x -z) u(r,z) \1_{[0,t]}(r)\;.
\end{equ}
Differentiating on both sides and using the commutation relation \cite[Eq.~1.46]{nualart2006malliavin}, 
it follows that for $s \in [0, t]$ one has
\begin{equ}
	D_{s,y} u(t,x) = \beta P_{{t} -s } ( x -y) u(s,y) + \beta \int_s^t \int_{\R^d}   P_{{t} -r } ( x -z)  D_{s,y} u(r,z) \, \xi(dr, dz)\;.
\end{equ}
This shows that $D_{s,y} u$ solves the same equation as $u$ itself, but with initial condition
$\beta u(s,\cdot)\delta_y$ at time $s$, so that \eqref{e:MallDeru} follows by linearity.

The moment bound on $w_{s,y}$ follows by the same steps as in the proof of \cite[Lem.~2.8]{GHL23_arxiv_preprint} (we omit tracking the same constants here). Consider $w^{(0)}_{s, y} (t,x) := P_{t-s}(x-y)$, and 
\begin{equ}
	w^{(n+1)}_{s,y}(t,x) = P_{t-s}(x-y) + \beta \int_s^t \int_{\R^d} P_{t-r}(x-z) w^{(n)}_{s,y}(r,z)\,\xi(dr,dz)\;.
\end{equ}
Letting $\tilde w^{(n)}_{s,y}(t,x) =  w^{(n+1)}_{s,y}(t,x) - w^{(n)}_{s,y}(t,x)$, by \eqref{eq:BDGineq+} one has 
\begin{equs}
	& \|\tilde w^{(n)}_{s,y}(t,x)\|_p\\
	&\leq \beta C_p \biggl( \int_{s}^{t}\int_{\R^{2d}}  \Big(\prod_{i = 1}^2 P_{t-r} (x-z_i) \|\tilde w^{(n-1)}_{s,y}(r,z_i)\|_p  \Big) \big| R(z_1 -z_2)\big| \, dz \, dr \biggr)^{\f 1 2}\\
	& \lesssim   \beta C_p \sup_{r > s }\sup_{z} \|w^{(n-1)}_{s,y}(r,z)\|_p\;.
\end{equs}
Thus, for sufficiently small $\beta \leq \beta_0(p)$, $ w^{(n)}_{s,y}$ converges to $ w_{s,y}$ as $n \to \infty$. Moreover, again by \eqref{eq:BDGineq+}, we have
\begin{equs}
	\|  w^{(n+1)}_{s,y} (t,x) \|^2_p &\lesssim   P_{t-s}(x-y) +  \\	
	& \quad \beta^2 \int_{s}^{t} \int_{ \R^{2d}} 
	R(z_1-z_2) \Big( \prod_{i=1}^{2} P_{t-r}(x-z_i) 
	\| w^{(n)}_{s,y} (s, z_i) \|_p  \Big)\,
	dz\, dr\;.
\end{equs}
Given these bounds and Lemma~\ref{lem:gaussian_cov_est}, %\eqref{eq:time_decay_est}, 
\eqref{e:bound_w} follows by \cite[Lem.~2.7]{GHL23_arxiv_preprint}.  
%\luca{Added essential details, did not include them since were the same steps as second half of Lemma 2.8 SHE. }

	In view of Proposition~\ref{prop:C_Z_relation}, the bounds on the mass process
	$C$ are an immediate consequence of \eqref{eq:Lp_cvg_rate} in the uniform moment bounds on $\vec Z$.
	
	The uniform control on the negative moments of $u$ in \eqref{eq:moment_u} is a consequence of 
	the fact that $\log u(t,x)$ has sub-Gaussian left tails, namely for $\theta > 0$, 
	\begin{equ}\label{eq:left_tails}
		\mathbb{P}\bigl( \log u(t,x) \leq -\theta\bigr) \leq C e^{-\frac {\theta^2} 2}\;.
	\end{equ}
%	Then,  one controls all negative moments of $u(t,x) $:
%	\begin{equs}
%		\E[ u(t,x) ^{-p}] &= p \int_0^\infty r^{-p-1} \bP ( u(t,x)  \leq r   )\, dr \\
%		& \leq 
%		p \int_0^1 r^{-p-1} \bP ( u(t,x)  \leq r   )\, dr + p \int_1^\infty  r^{-p-1} \,dr \\
%		& \lesssim C \int_0^1 r^{-p-1} e^{-\frac {(\log r)^2} 2 } \,dr + 1 < \infty. 
%	\end{equs}
	The proof of \eqref{eq:left_tails} follows by \cite[Thm.~1.3]{cosco2020renormalizing},  given that $u$ is uniformly $L^2$ bounded for any $\beta \leq \beta_0(2)$. Their strategy to discretize and cut-off of the mollifier $\phi$ does not require $\phi$ to be compactly supported. Then, all steps carry through once noted that even with long range covariance $R$, uniformly in $t>0$,
	\begin{equs}
		\mathbf{E}_{B^{(1)}, B^{(2)}} \Big[ &\int_0^t R(B^{(1)}_s -  B^{(2)})\, ds \; e^{ \beta^2 \int_0^t R(B^{(1)}_s -  B^{(2)})\, ds} \Big] 
		%		&\lesssim \mathbf{E}_{B^{(1)}, B^{(2)}} \Big[ 
		\lesssim 
		\sup_{\substack{ \beta \leq \beta_0(2), \\ t>0}}
		\E [u(t, 0)^2] \lesssim 1, 
	\end{equs}
	where $\mathbf{E}_{B^{(1)}, B^{(2)}}$ denotes the expectation with respect to Brownian motions $B^{(i)}$ independent of $\xi$. The first inequality above follows by noting that, via Feynman-Kac formula, one has an explicit expression for the second moment of $u$ (cf. \cite[Lem.~3.1]{MSZ16_weak_strong_mshe}): 
	\begin{equ}
		\E [u(t, 0)^2] = \mathbf{E}_{B^{(1)}, B^{(2)}} \Big[ e^{ \beta^2 \int_0^t R(B^{(1)}_s -  B^{(2)})\, ds} \Big]. 
	\end{equ}
	
	Regarding $\vec Z$, we note that it solves the fixed point problem
	\begin{equ}[e:Z_mild_eqn]
		\vec Z(t,x)=1 + \beta \int_{-\infty}^t\int_{\R^d} P_{t-s}(x-y) \vec Z(s,y)\,\xi(ds,dy)\;,
	\end{equ}
	whence we conclude that
	\begin{equ}
	D_{r,z} \vec Z(t,x) =\beta P_{t-r}(x-z) \vec Z(r,z)+ \beta \int_{-\infty}^t\int_{\R^d} P_{t-s}(x-y) D_{r,z} \vec Z(s,y)\,\xi(ds,dy).
		%D_{r,z} \vec Z(t,x) = \beta \vec Z(r,z) w_{r,z} (t, x)\;,
	\end{equ}
Since $D_{r,z} \vec Z(t,x)=0$ for $r>t$,   assertion (\ref{e:MallDerZ}) holds.
	Since the bounds in \ref{i} are uniform in time, their extension to $\vec Z$ is immediate. It remains to show (\ref{e:estimate_Csy}), which follows from
	\Cref{prop:C_Z_relation}, the fact that $\cev Z(s,y)(\xi)=\lim_{r\to -\infty}u^{(r)}(-s,y)(\CR \xi)$, and \Cref{thm:stat_field}.
\end{proof}

With positive and negative moment bounds in place, we can Malliavin differentiate transformations of $u$ and then approximate fluctuations through the homogenisation result. 

\begin{lemma}\label{lem:transform_moments+decorrelation}
	Assume $\gfr \in \C^1(\R_+)$ and that, for some $M, q \geq 1$, satisfies for all $z \in \R_+$,
	\begin{equ} \label{eq:control_growth_Psi}
		|\gfr(z)| + | \gfr'(z) | \leq M(z^{-q} + z^q). %\qquad \text{for all } z \in \R_+.
	\end{equ}
	Then, for any $p >1$ there exists $\beta_0^{M,q}(p) = \beta_0(p, d, R, q, M)$ such that for any $ \beta \leq \beta_0^{M,q}(p)$, 
	we have $\gfr (u(t,x)) \in \bD^{1,p}$ with
	\begin{equ}\label{eq:malliavin_der_g}
		D_{s,y} [\gfr (u(t,x)) ]= \beta  \gfr' (u(t,x)) u(s,y)w_{s,y}(t,x) \1_{[0,t]}(s), %\in L^p(\Omega, \cH),
	\end{equ}
	and the following bounds are satisfied:
	\begin{equs}
		\| \gfr(u(t,x))\|_p  &\lesssim M \;, &\quad 	
		\| \gfr'(u(t,x)) \|_p  &\lesssim M\;, \qquad \label{eq:moment_gu} \\
		\| D_{s,y}\gfr(u(t,x))\|_{p} &\lesssim  M P_{t-s}(x-y)\;, &\quad  % this is superfluous
		\| \gfr (u(t,x)) \|_{\bD^{1,p}} &\lesssim M\;, \qquad \label{eq:moment_Dgu}
	\end{equs}
	uniformly over $s$, $t$, $x$, $y$.
	Moreover, uniformly over  $x \in \R^d$ we have  %\luca{Needed for correlation decay in Corollary~\ref{cor:general_ic}.}
	\begin{equ}\label{eq:g_cov_space_estimate}
		\sup_{t>0} \big| \cov\left( \gfr(  u(t, x)) , \gfr( u(t, 0) )  \right) \big| \lesssim  M \left( 1 \wedge |x|^{2 -\kappa}\right)\;.
	\end{equ}	
	All the estimates above also hold with $\vec Z(t,x)$ in place of $u(t,x)$. Finally, we also have the following convergence rate %\luca{Used in Proposition~\ref{prop:fluctuations_approx}}
	\begin{equ} \label{eq:Lp_rate_transform}
		\|	\gfr( u^{(s)}(t,x)) - \gfr(\vec Z (t,x) )\|_p \lesssim  %M
		1 \wedge (t-s)^{\frac {2-\kappa} 4}\;.
	\end{equ}
\end{lemma}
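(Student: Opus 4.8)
The plan is to deduce every assertion from the Malliavin chain rule, the positive and negative moment bounds of Lemma~\ref{lem:moments}, and the covariance estimates of Lemma~\ref{lem:gaussian_cov_est}. The first step — and the one I expect to be the main obstacle — is to justify \eqref{eq:malliavin_der_g}: approximate $\gfr$ by a sequence $\gfr_n \in \C^1(\R_+)$ with \emph{bounded} derivatives (truncating $\gfr$ near $0$ and near $+\infty$), apply the standard chain rule to get $D_{s,y}\gfr_n(u(t,x)) = \gfr_n'(u(t,x))\,D_{s,y}u(t,x)$ with $D_{s,y}u(t,x)=\beta u(s,y)w_{s,y}(t,x)\1_{[0,t]}(s)$ from Lemma~\ref{lem:moments}, and pass to the limit using closability of $D$. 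The required convergences $\gfr_n(u(t,x))\to\gfr(u(t,x))$ in $L^p(\Omega)$ and $\gfr_n'(u(t,x))\,D u(t,x)\to\gfr'(u(t,x))\,D u(t,x)$ in $L^p(\Omega;\cH)$ follow by dominated convergence; this is precisely where the growth bound $|\gfr(z)|+|\gfr'(z)|\le M(z^{-q}+z^q)$ and the uniform control of all positive \emph{and} negative moments of $u(t,x)$ (for $\beta$ small, Lemma~\ref{lem:moments}) enter, the dominating variables being integrable only once $\beta$ is small enough that $u(t,x)$ has finite $pq$-th positive and negative moments.

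Granting \eqref{eq:malliavin_der_g}, the estimates \eqref{eq:moment_gu} are immediate from the growth bound and $\sup_{t,x}\bigl(\|u(t,x)\|_{pq}+\|u(t,x)^{-1}\|_{pq}\bigr)\lesssim 1$. The pointwise bound in \eqref{eq:moment_Dgu} follows by Hölder in three factors in \eqref{eq:malliavin_der_g}, using $\|\gfr'(u(t,x))\|_{3p}\lesssim M$, $\|u(s,y)\|_{3p}\lesssim 1$ and $\|w_{s,y}(t,x)\|_{3p}\lesssim P_{t-s}(x-y)$ from \eqref{e:bound_w}. For the $\bD^{1,p}$-bound, write $\|D\gfr(u(t,x))\|_\cH = \beta\,|\gfr'(u(t,x))|\,\|g\|_\cH$ with $g_{s,y}=u(s,y)w_{s,y}(t,x)\1_{[0,t]}(s)$, so that Cauchy--Schwarz in $\Omega$ gives $\E\|D\gfr(u(t,x))\|_\cH^p \lesssim M^p\,\bigl\|\,\|g\|_\cH^2\,\bigr\|_p^{p/2}$; Hölder in four factors and \eqref{eq:time_decay_est} then bound $\bigl\|\,\|g\|_\cH^2\,\bigr\|_p$ by a constant times $\int_0^t\!\int_{\R^{2d}} P_{t-s}(x-y_1)P_{t-s}(x-y_2)|R(y_1-y_2)|\,dy\,ds \lesssim \int_0^t (1+\sqrt{t-s})^{-\kappa}\,ds$, which is bounded uniformly in $t$ exactly because $\kappa>2$.

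For \eqref{eq:g_cov_space_estimate} I would invoke the Clark--Ocone representations of $\gfr(u(t,x))$ and $\gfr(u(t,0))$; since $\xi$ is white in time, this gives
\[
\bigl|\cov\bigl(\gfr(u(t,x)),\gfr(u(t,0))\bigr)\bigr|\le \int_0^t\!\!\int_{\R^{2d}} \bigl\|D_{s,y_1}\gfr(u(t,x))\bigr\|_2\,\bigl\|D_{s,y_2}\gfr(u(t,0))\bigr\|_2\,|R(y_1-y_2)|\,dy\,ds .
\]
Inserting $\|D_{s,y}\gfr(u(t,x))\|_2 \lesssim M\,P_{t-s}(x-y)$ from the previous step, substituting $r=t-s$ and using $\int\!\int P_r(y_1)P_r(y_2)f(y_1-y_2)\,dy_1\,dy_2 = \int P_{2r}(z)f(z)\,dz$, the right-hand side reduces to a constant multiple of $\int_0^t (P_{2r}*R)(x)\,dr \lesssim \int_0^\infty (1+|x|+\sqrt r)^{-\kappa}\,dr \lesssim 1\wedge|x|^{2-\kappa}$, where \eqref{eq:time_decay_est} (equivalently \eqref{eq:control_space}) and $\kappa>2$ are used; this is \eqref{eq:g_cov_space_estimate}.

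Finally, the $\vec Z$-versions follow by running the same arguments with the $\vec Z$-analogues from Lemma~\ref{lem:moments} — the Malliavin formula \eqref{e:MallDerZ}, now with the time integral over $(-\infty,t]$, and the uniform positive and negative moments of $\vec Z$ — after which the change of variables $r=t-s\in[0,\infty)$ turns the relevant time integrals into exactly $\int_0^\infty(1+\sqrt r)^{-\kappa}\,dr$ and $\int_0^\infty (P_{2r}*R)(x)\,dr$, which converge by $\kappa>2$ and are controlled by \eqref{eq:control_space}. For the rate \eqref{eq:Lp_rate_transform} I would write $\gfr(a)-\gfr(b)=(a-b)\int_0^1\gfr'(b+\theta(a-b))\,d\theta$, so that $|\gfr(a)-\gfr(b)|\lesssim M\,|a-b|\,(a^{-q}+b^{-q}+a^q+b^q)$, then apply Hölder and combine Theorem~\ref{thm:stat_field} (which gives $\|u^{(s)}(t,x)-\vec Z(t,x)\|_{2p}\lesssim 1\wedge(t-s)^{\frac{2-\kappa}{4}}$) with the uniform $2pq$-th positive and negative moments of $u^{(s)}(t,x)$ — equal to those of $u(t-s,x)$ by stationarity of $\xi$ and Lemma~\ref{lem:RDS} — and of $\vec Z(t,x)$. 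Throughout, $\beta_0^{M,q}(p)$ is taken to be the minimum of the finitely many thresholds $\beta_0(\cdot)$, $\beta_1(\cdot)$ from the earlier results at the exponents $2p,3p,4p,2pq,3pq,4pq$ that appear.
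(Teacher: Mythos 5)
Your proposal is correct and follows essentially the same route as the paper: truncation of $\gfr$ to justify the chain rule via closability of $D$, Hölder with the uniform positive and negative moments of $u$, $\vec Z$ and $w_{s,y}$ for the pointwise and $\bD^{1,p}$ bounds, Clark--Ocone plus \eqref{eq:control_space} for the covariance decay, and the first-order Taylor identity plus \eqref{eq:Lp_cvg_rate} for the convergence rate. The only cosmetic difference is that the paper replaces your appeal to dominated convergence in the truncation step by a quantitative estimate on $\mathbb{P}(\{u<\tfrac1n\}\cup\{u>n\})$, which yields the same conclusion.
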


\begin{proof} For $\beta\leq\beta_0(pq)$, bounds \eqref{eq:moment_gu} follow immediately from  \eqref{eq:moment_u} 
and the assumptions on $\gfr$.
	Observe that $\beta_0(p)$ is non-increasing in $p$.
	%Given assumptions and control of negative/positive $p$th-moments of $u$, it is clear that $	\| \gfr (u(s,x))\|_p  \leq M (\| u(s,x) \|_{pq}^q  + \| u(s,x)^{-1} \|_{pq}^q ) \lesssim 1$ and similarly
	%$\| \gfr' (u(s,x))\|_p \lesssim 1$, uniformly in $s$ and $x$, as long as $\beta < \tbeta$.  
	
	%%% Malliavin der estimates
	Regarding the Malliavin derivative, let us introduce an approximation by bounded functions $\gfr_n \in \C_b^1(\R_+)$ such that $\gfr_n = \gfr$ on $[\frac 1 n, n]$. 
	Consider $\varrho_n : \R_+ \to \R_+$ defined by
	\begin{equ} 	
		\varrho_n(x) := \begin{cases}
			\frac 1 {2n} + \frac n 2 x^2 \qquad  &\text{for } x \in [0, \frac 1 n], \\
			x\qquad  \qquad  &\text{for } x \in [\frac 1 n, n], \\
			\frac 3 2 n - \frac 1 {2n} (x - 2n)^2  \qquad   &\text{for } x \in [n, 2n], \\
			\frac 3 2 n \qquad  &\text{for } x \in [2n, +\infty). 
		\end{cases}
	\end{equ}
	Then, we let $\gfr_n := \gfr \circ \varrho_n$. Note that $\varrho_n \in  \C_b^1(\R_+)$ with $\sup_{x}|\varrho_n'(x)| \leq 1$ and range $\varrho_n (x) \in [\frac 1 {2n}, \frac 3 {2} n]$. Thus, $\gfr_n \in \C_b^1(\R_+)$ for all $n$. Moreover, we note that uniformly in $n$:
	\begin{equ}
		|\varrho_n (x)| \leq |x| +1, \qquad |\varrho_n (x)|^{-1} \leq \tfrac 1 {|x|} + 1. 
	\end{equ}
	Then, given growth control \eqref{eq:control_growth_Psi} of $\gfr$, for $\beta\leq\beta_0(pq)$ we can also infer 
	\begin{equ}
		\sup_{n >0, s \geq 0,  x\in \R^d } \| \gfr_n(u(s,x))\|_p  \lesssim 1 , \qquad 	
		\sup_{n >0, s \geq 0,  x\in \R^d } \| \gfr_n'(u(s,x))\|_p  \lesssim 1.
	\end{equ}

	%and also satisfying \eqref{eq:control_growth_Psi} (thus \eqref{eq:moment_gu}) with constant uniform in $n$. 
	%\textcolor{red}{One needs to explain the existence of such an approximation . I think the following also works: $\Psi_n=\Psi h_n$ where $h_n:\R\to [0,1]$ is a sequence of smooth functions with $h_n\in [0, 1]$ and $h_n\to 1$ pointwise and $|\nabla h_n|\le 1$.} 

	Since $u(t,x) \in \bD^{1,p}$ by Lemma~\ref{lem:moments}, we have  
	$\gfr_n (u(t,x)) \in \bD^{1,p}$ with $D  \gfr_n (u(t,x))  = \gfr_n' (u(t,x)) D u(t,x)$  by \cite[Prop. 1.2.3]{nualart2006malliavin}. We let
	\begin{equs}
		F_{r,z}^{(n)}(t,x) &=  D_{r,z}  \big(\gfr_n (u(t,x)) \big) - \gfr' (u(t,x)) D_{r,z} u(t,z) \\
		& = \1_{ \{  \frac 1 n \leq u(t,x) \leq n \}^c} ( \gfr_n' (u(t,x)) - \gfr' (u(t,x)) )   D_{r,z} u(t,x).
	\end{equs}
	Then, for sufficiently small $\beta$ we use the moments estimates on $u,  \gfr', \gfr_n'$ and $Du$, % $u$ has finite negative/positive third moments 
	\begin{equs}
		\| F_{r,z}^{(n)} (t,x) \|_p &\leq \mathbb{P}( \{ u< \tfrac 1 n\} \cup \{ u>n\})^{\frac 1 {3p}}  \|  \gfr_n' (u(t,x)) - \gfr' (u(t,x)) \|_{3p} \|  D_{r,z} u(t,x) \|_{3p}\\
		&\lesssim  n^{-\frac 1 p} M  P_{t-r}(x -z).  \label{eq:decay_diff_p}
	\end{equs}
	By Minkowski, H\"{o}lder's inequalities and \eqref{eq:control_space},
	\begin{equs}
		\E \big[ \|F ^{(n)}(t,x) \|_\cH^{p}\big] &= \E \Big[ \Big(  \int_{0}^{t}\int_{ \R^{2d}} \prod_{i=1}^{2} F_{r,z_i}^{(n)}(t,x) R(z_1 -z_2) \,dr \,dz\Big)^{\frac p 2} \Big]	\\
		& \leq  \Big(  \int_{0}^{t}\int_{ \R^{2d}} \| \prod_{i=1}^{2} F_{r,z_i}^{(n)}(t,x) \|_{\frac p 2} R(z_1 -z_2) \,dr \,dz\Big)^{\frac p 2}\\
		& \leq n^{-1}  M^p \Big(  \int_{0}^{t}\int_{ \R^{2d}}  \prod_{i=1}^{2} P_{t-r}(x-z_i)  R(z_1 -z_2) \,dr \,dz\Big)^{\frac p 2}\\ &\leq C M^p n^{-1}.
	\end{equs}
	Thus, $F^{(n)}(t,x) \to 0$ in $L^p(\Omega, \cH)$ as $n \to \infty$. On the other hand, $\gfr_n(u(t,x))  \to \gfr(u(t,x))$ in $L^p(\Omega)$. Since $D$ is closed, we conclude $\gfr(u(t,x)) \in \dom(D)$ and  $D_{s,y} \gfr(u(t,x)) = \gfr'(u(t,x)) D_{s,y} u(t,x)$. Then, given \eqref{e:MallDeru}, representation \eqref{eq:malliavin_der_g} holds. Moreover, for sufficiently small $\beta$, given \eqref{e:bound_w} and the uniform moments bounds on $\gfr'(u)$ and $u$, we have $\| D_{r,z}\gfr(u(t,x))\|_{p} \lesssim  M P_{t-r}(x-z)$. Since the inequality constant is independent of $(t,x)$, we estimate similarly to above 
	\begin{equs}
		\E \big[ \| D \gfr(u(t,x))(t,x) \|_\cH^{p}\big] &\leq 
		\Big(  \int_{0}^{t}\int_{ \R^{2d}}  \prod_{i=1}^{2} \| D_{r,z_i}\gfr(u(t,x)) \|_{p} \, R(z_1 -z_2) \,dr \,dz\Big)^{\frac p 2}\\
		& \leq \Big( \int_{0}^{\infty}\int_{ \R^{2d}}  \prod_{i=1}^{2} P_{t-r}(x-z_i) R(z_1 -z_2) \,dr \,dz \Big)^{\frac p 2} \lesssim 1,  
	\end{equs}
	where we used \eqref{eq:control_space}. This concludes the proof of \eqref{eq:moment_Dgu}.
	%%%%  \| \gfr(u(s,x))\|_p  +  \| D  \gfr(u(s,x)) \|_{L^p(\Omega,\cH )}
	
	%$\sup_{t\geq 0, x \in \R^d} \| \gfr (u(t,x)) \|_{\bD^{1,p}} < \infty$ holds.  
	%
	%By Lemma~\ref{lem:moments} we see that $u(t,x) \in \bD^{1,p}$, then and identity \ref{eq:malliavin_der_g} follows.  
	%For \eqref{eq:moment_Dgu},  approximating $\gfr$ by smooth functions $\gfr_n \in \Ccinf$, by chain rule we have $  D_{r,z} \gfr_n(u(t,x)) = \gfr_n' (u(t,x))  D_{r,z} u(t,x)$, thus $\|  D_{r,z} \gfr_n(u(t,x)) \|_p \lesssim P_{t-r}(x-z)$ uniformly in $n$. 
	%Letting $n \to \infty$, we obtain $\| D_{r,z} \gfr (u(t,x)) \|_p \lesssim P_{t-r}(x-z)$ and \eqref{eq:malliavin_der_g}. By Minkowski and H\"{o}lder's inequalities, for sufficiently small $\beta$
	%we have
	%%and Lemma~\ref{lem:moments} with sufficiently small $\beta$, 
	%\begin{equs}
	%\E \big[ \| D \gfr(u(t,x))\|_\cH^{2p}\big] &= \E \Big[ \Big(  \int_{0}^{t}\int_{ \R^{2d}} \prod_{i=1}^{2} D_{r, z_i} \gfr(u(t,x)) R(z_1 -z_2) dz dr\Big)^p \Big]	\\
	%& \leq  \Big(  \int_{0}^{t}\int_{ \R^{2d}} \| \prod_{i=1}^{2} D_{r, z_i} \gfr(u(t,x))\|_p R(z_1 -z_2) dz dr\Big)^p\\
	%& \leq  \Big(  \int_{0}^{t}\int_{ \R^{2d}}  \prod_{i=1}^{2} P_{t-r}(x-z_i)  R(z_1 -z_2) dz dr\Big)^p,\\
	%\end{equs}
	%This is finite uniformly in $t$ and $x$ by \eqref{eq:control_space}, thus  
	%$\sup_{t\geq 0, x \in \R^d} \| \gfr (u(t,x)) \|_{\bD^{1,p}} \lesssim M < \infty$. 
	
	Using the Clark--Ocone formula in the same way as in  \cite[Lem.~2.12]{GHL23_arxiv_preprint}, we obtain the bound
	\begin{equs}
		\bigl|\cov&\left( \gfr(  u(s, x)) , \gfr( u(s, 0))  \right)\bigr| 
		 \lesssim M \int_0^t \int_{\R^{2d}} 
		P_{t-r}(x-z_1) P_{t-r}(z_2) R(z_1-z_2) \,dz \,dr,
	\end{equs}
	so that \eqref{eq:g_cov_space_estimate} follows by \eqref{eq:control_space}.  
	
	It remains to show \eqref{eq:Lp_rate_transform}, for which we make use of the identity
	\begin{equ}
		\gfr( u^{(s)}(t,x)) - \gfr(\vec Z (t,x) ) = \big( u^{(s)}(t, x) - \vec Z(t,x)\big) \int_{0}^1 \gfr'\big( \theta u^{(s)}(t, x) + (1-\theta) \vec Z(t,x)\big) \, d\theta\;.
	\end{equ}
Using the fact that $u$ is non-negative and the uniform a priori bounds on positive and negative moments of $u$ and $\vec Z$, one obtains a uniform $L^{2p}$ bound on $ \int_{0}^1 \gfr'\big( \theta u(t,x) + (1-\theta) Z(t,x)\big) d\theta$ for sufficiently small $\beta$. Thus, \eqref{eq:Lp_cvg_rate} implies
	\begin{equ} 
		\|	\gfr( u^{(s)}(t,x)) - \gfr(\vec Z (t,x) )\|_p \lesssim M \big\|u^{(s)}(t, x) - \vec Z(t,x)\big\|_{2p} \lesssim 1 \wedge (t-s)^{\frac {2-\kappa} 4}\;,
	\end{equ}
which is precisely \eqref{eq:Lp_rate_transform}, concluding the proof. 
\end{proof}

\section{The main homogenisation theorem}
\label{sec:Homogenisation}

%\martin{I just copy / pasted this whole section from what we already had. It needs to
%be cleaned up and notations need to be adapted to the current ones.}
%\luca{Adjusted $\vec Z$, also use $p_{t-s} (x)$ for Gaussian density instead of $P_{t-s} (x)$ ?}
%\xuemei{yes, use small p for heat kernel}
In view of Theorem~\ref{theo:homogenisation}, recalling that $\vec Z(t,x) = \lim_{s\to -\infty} u^{(s)}(t,x)$, we define
\begin{equ}[e:defn_rho]
	\rhosy(t,x) =  \f{w_{s,y}(t,x)} {P_{t-s}(x-y)} -  C_{s,y}(t)\vec Z(t,x)\;.
\end{equ}
By Proposition~\ref{prop:C_Z_relation} and \eqref{e:estimate_Csy}, we know that
\begin{equ}
	\|  C_{s,y}(t) - \cev Z (s, y) \|_p \lesssim \beta (1 + t - s)^{\f {2- \kappa} 4}\;.
\end{equ}
Since $\kappa>2$,  our main homogenisation result then follows from the following proposition. 
The proof relies on a kernel estimate and moment bounds on Skorokhod correction terms proved below. 

\begin{proposition}\label{prop:rho_homogenisation}
Let  $\mu= \f 12 -\f 1\kappa$.
Then, given $p\geq 1$, there exists $\tilde \beta(p) > 0$, such that for $\beta \leq \tilde \beta$, %sufficiently small
 the following holds
\begin{equ}[e:homo_estimate]
	\sup_{x,y} \sup_{t > s \geq 0} \frac{ \norm{\rhosy(t,x) }_p }{ (1 + t-s)^{-\mu} (1 + (1 + t-s)^{-\f 1 2}|x-y|) } \lesssim \beta\;.
\end{equ}
\end{proposition}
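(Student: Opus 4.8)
The plan is to derive a closed linear equation for $\rho_{s,y}$ and to solve it perturbatively in $\beta$ via a fixed-point estimate in a weighted norm. The starting point is the Brownian-bridge factorisation $P_{t-r}(x-z)P_{r-s}(z-y)=P_{t-s}(x-y)\,\mathfrak b^{s,y}_{t,x}(r,z)$, where $\mathfrak b^{s,y}_{t,x}(r,\cdot)$ is the time-$r$ marginal of the Brownian bridge from $(s,y)$ to $(t,x)$: dividing the mild equation for $w_{s,y}$ (as in the proof of \Cref{lem:moments}) by $P_{t-s}(x-y)$ turns it into $\tfrac{w_{s,y}(t,x)}{P_{t-s}(x-y)}=1+\beta\int_s^t\!\int_{\R^d}\mathfrak b^{s,y}_{t,x}(r,z)\,\tfrac{w_{s,y}(r,z)}{P_{r-s}(z-y)}\,\xi(dr,dz)$. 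On the other side, $t\mapsto C_{s,y}(t)$ is, by \eqref{e:defn_Csy} and the fact that the Laplacian integrates to zero, a continuous martingale with $dC_{s,y}(t)=\beta\int_{\R^d}w_{s,y}(t,z)\,\xi(dt,dz)$, so that combining this with the mild equation \eqref{e:vecZ} for $\vec Z$ and an It\^o product computation in the mild formulation — taking care that $\vec Z$ uses the noise on $(-\infty,t]$ while $C_{s,y}$ only on $[s,t]$ — produces an equation for $C_{s,y}(\cdot)\vec Z(\cdot,x)$ driven by the \emph{free} kernel $P$. Subtracting, $\rho_{s,y}$ solves an equation of the schematic form $\rho_{s,y}(t,x)=\mathcal S_{s,y}(t,x)+\beta\int_s^t\!\int_{\R^d}\mathfrak b^{s,y}_{t,x}(r,z)\,\rho_{s,y}(r,z)\,\xi(dr,dz)+\mathcal R_{s,y}(t,x)$, with propagator the bridge kernel, $\mathcal S$ an explicit source built from the discrepancy between $\mathfrak b^{s,y}_{t,x}$ and the free kernels $P_{r-s}(\cdot-y)$, $P_{t-r}(x-\cdot)$ together with the covariation between $C_{s,y}$ and $\vec Z$, and $\mathcal R$ collecting the Skorokhod correction terms arising because $\vec Z(t,x)$ and $C_{s,y}(t)$ are not $\F_r$-adapted for $r<t$. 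Isolating which contributions actually survive in $\mathcal S$ after the algebraic cancellations between the three equations is itself part of the work.

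\textbf{Source and correction estimates.} The core is to show $\|\mathcal S_{s,y}(t,x)\|_p+\|\mathcal R_{s,y}(t,x)\|_p\lesssim\beta\,(1+t-s)^{-\mu}\bigl(1+(1+t-s)^{-1/2}|x-y|\bigr)$. The deterministic kernel estimate (proved separately below) controls $|\mathfrak b^{s,y}_{t,x}(r,z)-P_{r-s}(z-y)|$ and $|\mathfrak b^{s,y}_{t,x}(r,z)-P_{t-r}(x-z)|$: the bridge differs from the free motion through a variance contraction of relative size $\tfrac{(r-s)\wedge(t-r)}{t-s}$ and through a recentring drift of size $\tfrac{(r-s)\wedge(t-r)}{t-s}|x-y|$, and it is the drift that feeds the linear growth in $|x-y|$ on the diffusive scale. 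Plugging this into a Burkholder--Davis--Gundy / Skorokhod moment inequality, together with the a priori moment bounds on $w_{s,y}$, $\vec Z$ and their Malliavin derivatives from \Cref{lem:moments} and the non-integrable covariance estimates \eqref{eq:time_decay_est}--\eqref{eq:both_ends_cov} of \Cref{lem:gaussian_cov_est}, produces the claimed bound: the exponent $\mu=\tfrac12-\tfrac1\kappa$ emerges from the time decay $\sim t^{-\kappa/2}$ of \eqref{eq:time_decay_est} and the spatial decay $\sim|x|^{2-\kappa}$ of \eqref{eq:control_space} evaluated on the relevant scales. The covariation term is handled by the Clark--Ocone manipulation already used in \Cref{lem:transform_moments+decorrelation}, the decorrelation being again supplied by \eqref{eq:control_space}; the Skorokhod corrections $\mathcal R$ are bounded using \eqref{e:MallDeru}, \eqref{e:MallDerZ} together with \eqref{e:bound_w}.

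\textbf{Closing the estimate.} Finally set $\Theta(T)=\sup_{x,y}\sup_{0\le t-s\le T}\|\rho_{s,y}(t,x)\|_p\big/\bigl[(1+t-s)^{-\mu}(1+(1+t-s)^{-1/2}|x-y|)\bigr]$. Taking $L^p$-norms in the $\rho$-equation, using BDG together with the bridge-kernel analogue of the covariance estimate used in \Cref{lem:moments}, and checking that the weight $(1+t-s)^{-\mu}(1+(1+t-s)^{-1/2}|x-y|)$ is, up to a multiplicative constant, reproduced by convolution of $\rho$ against the bridge propagator, one gets $\Theta(T)\le C\beta+C\beta\,\Theta(T)$ uniformly in $T>0$; for $\beta\le\tilde\beta(p)$ small enough this yields $\Theta(T)\lesssim\beta$ uniformly in $T$, which is precisely \eqref{e:homo_estimate}.

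\textbf{Main obstacle.} The crux is the source estimate: one needs a genuinely two-endpoint analysis — one endpoint producing the $\vec Z$ factor, the other, via $C_{s,y}$, the $\cev Z$ factor — of the bridge-versus-free kernel discrepancy tested against the long-range covariance, in order to extract the sharp rate $(1+t-s)^{-\mu}$ with the correct linear spatial growth. This is exactly where $\kappa>2$ is used: it is what makes the governing space-time integrals of $R$ finite and $\mu$ positive. A secondary difficulty is the control of the Skorokhod corrections $\mathcal R$, which are genuinely present because $\vec Z(t,x)$ and $C_{s,y}(t)$ anticipate the integration variable $r$, and is where the Malliavin-derivative identities of \Cref{lem:moments} enter. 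A minor but recurring point is to propagate the $|x-y|$-weight through the fixed-point iteration without degrading the exponent $\mu$.
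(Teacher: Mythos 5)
Your overall architecture matches the paper's: divide the mild equation for $w_{s,y}$ by $P_{t-s}(x-y)$ to get the bridge-kernel equation for $\tilde w_{s,y}$, expand $C_{s,y}(t)\vec Z(t,x)$ (taking care of the $(-\infty,s)$ tail of $\vec Z$'s mild equation and of the anticipativity of $C_{s,y}(t)$ and $\vec Z(t,x)$ via Skorokhod corrections), identify a source governed by the discrepancy $q_{s,t}^{x,y}-P_{r-s}(\cdot-y)-P_{t-r}(x-\cdot)$, and close a weighted fixed point in $\beta$. However, there is a genuine gap in your closing step: you take the \emph{bridge kernel} as the propagator and assert that the weight $m(r,z)=(1+r)^{-\mu}(1+(1+r)^{-1/2}|z|)$ is reproduced by convolving $\rho_{s,y}$ against it. It is not. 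By BDG the relevant quantity is
\begin{equ}
\int_s^t\int_{\R^{2d}}\Bigl(\prod_{i=1}^2 q_{s,t}^{x,y}(r,z_i)\,\|\rho_{s,y}(r,z_i)\|_p\Bigr)R(z_1-z_2)\,dz\,dr\;,
\end{equ}
and for $r$ close to $s$ the bridge marginal has variance $(r-s)(t-r)/(t-s)\approx r-s$, so by \eqref{eq:time_decay_est} the $z$-integral of $q\otimes q$ against $R$ is of order $(1+r-s)^{-\kappa/2}$, with no smallness in $t-s$; meanwhile the best available bound on $\|\rho_{s,y}(r,z)\|_p$ on the support of the bridge near $r=s$ is $\Theta\cdot m(r-s,z-y)\approx\Theta$, again with no smallness. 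The region $r\in[s,s+1]$ therefore contributes $O(\Theta^2)$ to the square, so BDG only yields $\bigl\|\int q\,\rho\,d\xi\bigr\|_p\lesssim\Theta$ rather than $\lesssim\Theta\,m(t-s,x-y)$; at $x=y$ with $t-s$ large the required bound is $\Theta(1+t-s)^{-\mu}\to 0$, so your inequality $\Theta\le C\beta+C\beta\,\Theta$ in the weighted norm does not follow and the iteration does not close.

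The paper's decomposition is arranged precisely to avoid this: it splits $q_{s,t}^{x,y}=\bigl(q_{s,t}^{x,y}-P_{r-s}(\cdot-y)-P_{t-r}(x-\cdot)\bigr)+P_{r-s}(\cdot-y)+P_{t-r}(x-\cdot)$, absorbs the first piece into the source $E_0$ (which is exactly what \Cref{lem:Kernel_P_bound} controls, and which is the term that dictates the exponent $\mu$), lets the $P_{r-s}$ piece cancel against the equation for $C_{s,y}$, and keeps $P_{t-r}(x-\cdot)$ as the propagator acting on $\rho_{s,y}$. Its pairing with $R$ gives $(1+t-r)^{-\kappa/2}$, which is small exactly where the weight on $\rho_{s,y}$ is not (near $r=s$), and \eqref{e:alpha_beta_decay} then closes the loop with exponent $\min(2\mu,\tfrac\kappa2-1)=2\mu$, using $\kappa>2$. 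If you rewire your fixed point with this propagator, the rest of your outline — the two-endpoint source estimate, the $E_1$-type tail, and the Skorokhod corrections via \eqref{e:MallDeru} and \eqref{e:MallDerZ} — is in the right place, though the deterministic kernel estimate you defer to "below" is itself a substantial computation that still needs to be carried out.
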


\begin{proof}
%Given $\mu\in (0, \f 12 -\f 1\kappa)$, let $\alpha = \frac 2 \kappa$ and pick $\lambda \in (2, \kappa)$ such that $\mu = \f \alpha 2 (\f \lambda 2 -1)$.
Let us consider $\tilde w_{s,y}(t,x) = w_{s,y}(t,x)/P_{t-s}(x-y)$. 
Since $w_{s,y}(t,x)$ solves 
\begin{equ}[e:defn_w]
	w_{s,y}(t,x) = P_{t-s}(x-y) + \beta \int_s^t \int_{\R^d} P_{t-r}(x-z)w_{s,y}(r,z)\,\xi(dr,dz)\;,
\end{equ}
one has the identity
\begin{equ}[e:exprTildew]
	\tilde w_{s,y}(t,x) = 1 + \beta\int_s^t \int_{\R^d} 	q_{s, t}^{x,y} (r, z)   \tilde w_{s,y}(r,z)\,\xi(dr,dz)\;, 
\end{equ}	
where $q_{s, t}^{x,y}$ is kernel of the Brownian bridge starting from $s$ at $x$ and ending at $y$ at time $t$,
\begin{equ} [e:brownian_bridge]
	q_{s, t}^{x,y} (r, z) =\f {	P_{r-s}(z-y) P_{t-r}(x-z)} {P_{t-s}(x-y)}  = P_{(r-s)(t-r)/(t-s)} (z-y - \tfrac{r-s}{t-s}(x-y))\;. 
\end{equ}
On the other hand, integrating \eqref{e:defn_w} over $x$,  we obtain
\begin{equ}[e:Csy_eqn]
	C_{s,y}(t)	=  1 + \beta \int_s^t \int_{\R^d} P_{r-s} (z-y) \tilde w_{s,y}(r,z) \xi(dr,dz)\;.
\end{equ}
As a consequence, we can wrte $\rho_{s,y}$ as 
 \begin{equs}[e:rho_id1]
	\rho_{s,y}(t,x)  &= \tilde w_{s,y}(t,x) - C_{s,y}(t) \vec Z(t,x)\\
	&= 1 + \beta \int_{s}^t \int_{\R^d} 	q_{s, t}^{x,y} (r, z) \tilde w_{s,y}(r,z)\,\xi(dr,dz) \\
	&\quad - \Bigl(1 + \beta \int_{-\infty}^t \int_{\R^d} P_{t-r}(x-z) \vec Z(r,z)\,\xi(dr,dz)\Bigr) C_{s,y}(t)\\
	&= \beta\int_s^t \int_{\R^d} \big( 	q_{s, t}^{x,y} (r, z) - P_{r-s}(y-z)\big) \tilde w_{s,y}(r,z)\,\xi(dr,dz) \\
	&\quad - \beta \int_{-\infty}^t \int_{\R^d}P_{t-r}(x-z) \vec Z(r,z)\,\xi(dr,dz) \, C_{s,y}(t)\;.
\end{equs}
To treat the term $ C_{s,y}(t)$, we split the last integral into two parts, integrating over $r \in (-\infty, s)$ leads to a contribution proportional to
\begin{equ}[eq:defn_E1]
	E_1 := \int_{-\infty}^s \int_{\R^d}P_{t-r}(x-z)\vec Z(r,z)\,\xi(dr,dz)\, C_{s,y}(t)\;.
\end{equ}
For the other region $r \in [s, t]$ , we rewrite it in terms of a Skorokhod integral using the fact  (cf. \cite[Prop.~1.3.3]{nualart2006malliavin})
that 
$F \int G\,d\xi = \delta (FG) + \langle  G , DF \rangle_\cH$ 
with $F = C_{s,y}(t)$ and $G(r,z) = P_{t-r}(x-z)\vec Z(r,z)\1_{[s, t]}(r)$. Moreover, $C_{s,y}(r)$ is $\F_r$-adapted and we obtain, writing $\delta \xi$ for the Skorokhod integration:
\begin{equs}
&\int_{s}^t \int_{\R^d} P_{t-r}(x-z) \vec Z(r,z)\, \xi(dr,dz) \, C_{s,y}(t)  \\
& \quad = \int_{s}^t \int_{\R^d}  P_{t-r}(x-z) \vec Z(r,z)  C_{s,y}(t) \, \delta \xi (dr, dz) \\
&\quad \quad +\int_{s}^t \int_{\R^{2d}}  P_{t-r}(x-z_1) \vec Z(r,z_1)\, D_{r,z_2} C_{s,y}(t)R(z_1-z_2 ) \,dr\,dz \\
& \quad =  \int_{s}^t \int_{\R^d}  P_{t-r}(x-z) \vec Z(r,z)  C_{s,y}(r) \, \xi (dr, dz) \\
& \quad \quad + \int_{s}^t \int_{\R^d}  P_{t-r}(x-z) \vec Z(r,z)  \big( C_{s,y}(t) - C_{s,y}(r)\big)  \, \delta \xi (dr, dz)\\
&\quad \quad +\int_{s}^t \int_{\R^{2d}}  P_{t-r}(x-z_1) \vec Z(r,z_1)\, D_{r,z_2} C_{s,y}(t)R(z_1-z_2 ) \,dr\,dz\;.  
\end{equs}
Bringing all these back to \eqref{e:rho_id1} and given the definition \eqref{e:defn_rho} of $\rho_{s,y}$, we obtain 
%\begin{equs}
%	\rho_{s,y}(t,x)  &=  \beta \int_{s}^t \int_{\R^d}  P_{t-r}(x-z) \rho_{s,y}(r,z)  \, \xi (dr, dz) \\ %+ \beta E_0 - \beta (\sum_{i=1}^4)
%	& \quad + \beta\int_s^t \int_{\R^d} \big(	q_{s, t}^{x,y} (r, z) - P_{r-s}(y-z) - P_{t-r}(x-z) \big) \tilde w_{s,y}(r,z)\,\xi(dr,dz)\\
%	& \quad - \beta  \int_{-\infty}^s \int_{\R^d}P_{t-r}(x-z) Z(r,z)\,\xi(dr,dz)\, C_{s,y}(t) \\
%	& \quad - \beta  \int_{s}^t \int_{\R^d}  P_{t-r}(x-z) Z(r,z)  \big( C_{s,y}(t) - C_{s,y}(r)\big)  \, \delta \xi (dr, dz) \\
%	& \quad - \beta  \int_{s}^t \int_{\R^{2d}}  P_{t-r}(x-z_1) Z(r,z_1)\, D_{r,z_2} C_{s,y}(t)R(z_1-z_2 ) \,dr\,dz
%\end{equs}
\begin{equs}[e:rho_decomp]
	\rho_{s,y}(t,x)  =  \beta \int_{s}^t \int_{\R^d}  P_{t-r}(x-z) \rho_{s,y}(r,z)  \, \xi (dr, dz) + \beta E_0 - \beta \sum_{i=1}^3 E_i\;,\quad 
\end{equs}
where $E_1$ was defined in \eqref{eq:defn_E1} and the other terms are given by %\martin{What's $E_1$?}\luca{Was introduced 2 equations above}
\begin{equs}
	E_0 &:= \int_s^t \int_{\R^d} \big( 	q_{s, t}^{x,y} (r, z) - P_{r-s}(y-z) - P_{t-r}(x-z) \big) \tilde w_{s,y}(r,z)\,\xi(dr,dz)\;,\\
%	E_1 &=  \int_{-\infty}^s \int_{\R^d}P_{t-r}(x-z) Z(r,z)\,\xi(dr,dz)\, C_{s,y}(t) \\
	E_2 &:=  \int_{s}^t \int_{\R^d}  P_{t-r}(x-z) \vec Z(r,z)  \big( C_{s,y}(t) - C_{s,y}(r)\big)  \, \delta \xi (dr, dz)\;, \\
	E_3 &:=  \int_{s}^t \int_{\R^{2d}}  P_{t-r}(x-z_1) \vec Z(r,z_1)\, D_{r,z_2} C_{s,y}(t)R(z_1-z_2 ) \,dr\,dz\;.
\end{equs}
We can now estimate their $L^p$ norms. Given a time interval $I = [a,b]$ and an adapted random process $f(s,y)$, by Burkholder--Davis--Gundy (BDG) inequality, followed by Minkowski and H\"{o}lder inequalities, we have the following estimate 
\begin{equs}
{}&\Big\|\int_{I\times \R^d} f(s,y) \xi(ds, dy)\Big\|_{p} 
\leq C_p \E \left[ \Big(\int_I\int_{\R^{2d}} f(s,y_1)f(s,y_2) R(y_1 - y_2)  \,dy \,ds\Big)^{\f p 2 } \right]^{\f 1 p}\\
		& \qquad \qquad \leq C_p \Big( \int_I\int_{\R^{2d}} \|f(s,y_1) \|_p \|f(s,y_2)\|_p R(y_1 - y_2)  \,dy \,ds \Big)^{\f 1 2}\;.
		\label{eq:BDGineq+}
\end{equs}
%Given the uniform moments bounds from Lemma~\ref{lem:moments}, %on $C_{s,y}$ from Lemma~\ref{lem:moments}, 
%by BDG inequality applied to the stochastic integral $\int_{-\infty}^\cdot \int_{\R^{d}}P_{t-r} (x-y) \vec Z(r,y)\xi(dr, dy)$, followed by Minkowski, H\"{o}lder inequalities ]
%and \eqref{eq:time_decay_est}, for $\beta <  \beta_0(2p)$ we have 
Given the uniform moments bounds, on $C_{s,y}$ and $\vec Z$, from Lemma~\ref{lem:moments} and the kernel estimate \eqref{eq:time_decay_est}, for $\beta \leq  \beta_0(2p)$ we have 
\begin{equs}
	\| E_1 \|_p &\lesssim \Big\| \int_{-\infty}^s P_{t-r} (x - z)\vec Z(r,z)\, \xi(dr, dz)   \Big \|_{2p} \| C_{s,y} (t) \|_{2p} \\ %\lesssim (1 + t-s )^{\frac {2 -\kappa }4}\;.
	&\quad \lesssim  \Big( \int_{-\infty}^s \int_{\R^{2d}}  \prod_{j=1}^2 \Big( P_{t-r} (x-z_j)  \| \vec Z(r,z_j) \|_{2p} \Big) R(z_1 -z_2) \,dr \,dz \Big)^{\f 1 2} \\
	&  \quad \lesssim \Big( \int_{-\infty}^s (1 + t -r)^{-\f \kappa 2 } dr \Big)^{\f 1 2} \lesssim (1 + t-s)^{\frac {2-\kappa} 4}\;.
\end{equs}
For $E_0$, we note that by \eqref{e:bound_w},
\begin{equ}
	\sup_{x, y} \sup_{t > s \geq 0} \| \tilde w_{s,y} (t,x) \|_p \lesssim 1\;. 
\end{equ}
Then, estimating moments like above, followed by change of variables $z_i \mapsto z_i - y$, $r \mapsto r -s$, and Lemma~\ref{lem:Kernel_P_bound} below, we can bound $E_0$ as follows for $\beta \leq \beta_0(p)$:
\begin{equs}
	\|  &E_0 \|_p \leq  C_p\Big( \int_s^t \int_{\R^{2d}}\prod_{i = 1}^2  \Big(  	q_{s, t}^{x,y} (r, z_i) - P_{r-s}(y-z_i) - P_{t-r}(x-z_i) \Big) R(z_1 - z_2) \,dr \,dz \Big)^{\f 1 2} \\
	& \; =C_p \Big( \int_0^{t-s} \int_{\R^{2d}} \prod_{i = 1}^2 \Big(  q_{0, t-s}^{x-y,0} (r, z_i) - P_{r-s}(z_i) - P_{t-r}(x-y-z_i) \Big) R(z_1 - z_2) \,dr \,dz \Big)^{\f 1 2}\\
	& \;\lesssim (1 + t-s)^{-\mu}
	(1 + (1 + t-s)^{-\f 1 2}|x-y|)\;. 
\end{equs}
%$L^p$ moment is bounded up to a constant by
%\begin{equs}
%	&\Big\|  \int_{-\infty}^s \int_{\R^{2d}}  \prod_{j=1}^2 \Big( P_{t-r} (x-y_j)  Z(r,y) \Big) R(y_1 -y_2) \,dr \,dy  \Big\|_{\frac p 2}^{\f 1 2}   \\
%	& \quad \leq \Big( \int_{-\infty}^s \int_{\R^{2d}}  \prod_{j=1}^2 \Big( P_{t-r} (x-y_j)  \| Z(r,y) \|_p \Big) R(y_1 -y_2) \,dr \,dy \Big)^{\f 1 2}\\
%	&  \quad \lesssim \Big( \int_{-\infty}^s (1 + t -r)^{-\f \kappa 2 } dr \Big)^{\f 1 2} \lesssim (1 + t-s)^{\frac {2-\kappa} 4}\;.
%\end{equs}
%We used the uniform moments bound on $Z(t,x)$ and covariance estimate \eqref{eq:time_decay_est} to pass to the last line.
By Lemma~\ref{lem:Skorokhod_terms} below, for $\beta \leq  \beta_1(3p)\wedge \beta_0(3p)$ also the terms $E_2$ and $E_3$  satisfy 
\begin{equ}
\| E_i \|_p \lesssim (1 + t-s)^{-\mu} \;, \qquad i = 2,3 \;. 
\end{equ}  
Among the estimates, the worst decay rate is the bound on $E_0$, and we obtain
\begin{equs}
	\sum_{i= 0}^3 \| E_i \|_p  \lesssim m(t-s, x-y)\;,
\end{equs}
where we denote the space-time weight 
\begin{equ}
	m(r, z) = (1 + r)^{-\mu} (1 + (1 + r)^{-\f 1 2}|z|)\;.
\end{equ}

Finally, let us consider the weighted norm
\begin{equ}
	M := \sup_{x,y} \sup_{t > s \geq 0} \frac{ \norm{\rhosy(t,x) }_p }{ m(t-s, x-y) }\;. 
\end{equ} 
We estimate the first term in \eqref{e:rho_decomp} as follows:
\begin{equs}
&\Big\| \int_{s}^t \int_{\R^d}  P_{t-r}(x-z) \rho_{s,y}(r,z)  \, \xi (dr, dz) \Big\|_p \\
& \quad \leq C_p \Big(   \int_{s}^t  \int_{\R^{2d}} \prod_{i = 1}^2  \big(  P_{t-r}(x-z_i) \| \rho_{s,y}(r,z_i) \|_p   \big)  R(z_1 - z_2 ) \,dr \,dz \Big) ^{\frac 1 2 } \\ 
& \quad  \lesssim M \Big(   \int_{s}^t (1+r-s)^{-2\mu} \int_{\R^{2d}}
\Bigl(\prod_{i = 1}^2  P_{t-r}(x-z_i)  \Bigr)  R(z_1 - z_2 )  \,dr \,dz \Big) ^{\frac 1 2 } \\ 
& \quad \quad + M \Big( \sum_{j =1}^2  \int_{s}^t  (1+r-s)^{-1-2\mu}  \int_{\R^{2d}} \Bigl(\prod_{i = 1}^2 |z_j- y |^2 P_{t-r}(x-z_i) \Bigr)   R(z_1 - z_2 )  \,dr \,dz \Big) ^{\frac 1 2 } \\
& \quad \lesssim M \Big(   \int_{s}^t (1+r-s)^{-2\mu} (1 + t-r)^{-\f \kappa 2}\, dr \Big) ^{\frac 1 2 } \\
& \quad \quad + M \Big( \int_{s}^t (1+r-s)^{-1-2\mu} (1 + t-r)^{-\f \kappa 2} \big( | x - y|^2 + (t-r) \big)  \, dr dr \Big) ^{\frac 1 2 }\;, \label{e:rho_bound}
\end{equs}
where we used \eqref{eq:time_decay_est} and the fact that $\int_{ \R^d} |z|^2 P_r(z-x) dz  \lesssim r + |x|^2$ for the second integral. 
For $\alpha, \beta > 0$, we note
\begin{equ}[e:alpha_beta_decay]
	\int_0^T (1+\tau)^{-\alpha} (1 + T -\tau)^{-1-\beta} \, d\tau  \lesssim (1+T)^{-\min(\alpha, \beta)}\;.
\end{equ}
This follows by splitting the integration region. On the interval $[0, T/2]$, we have $T -\tau \geq T/2$ and 
\begin{equ}
	\int_0^{\f T 2} (1+\tau)^{-\alpha} (1 + T -\tau)^{-1-\beta} \, d\tau \lesssim (1 + T )^{-1-\beta} T \lesssim (1 + T )^{-\beta}\;, 
\end{equ}
while on  $[T/2, T]$, we have 
\begin{equ}
	\int_{\f T 2} ^T (1+\tau)^{-\alpha} (1 + T -\tau)^{-1-\beta} \, d\tau \lesssim (1+T)^{-\alpha} \int_0^{\infty}(1 + r)^{-1-\beta} \, dr \lesssim  (1+T)^{-\alpha}\;.
\end{equ}
Then, going back to \eqref{e:rho_bound}, we apply \eqref{e:alpha_beta_decay} to both terms and obtain
\begin{equ}
	\Big\| \int_{s}^t \int_{\R^d}  P_{t-r}(x-z) \rho_{s,y}(r,z)  \, \xi (dr, dz) \Big\|_p \lesssim M \; m(t-s, x-y)\;.
\end{equ}
Hence, by the estimates above, for some $C > 0$ we have 
\begin{equs}
\| \rhosy(t, x) \|_p \leq C \beta M \, m(t-s, x-y) +  C \beta \, m(t-s, x-y)\;.
\end{equs} 
Taking  $\tilde \beta>0$ such that $C \tilde \beta < 1 $  and $\tilde \beta \leq \beta_1(3p)\wedge \beta_0(3p)$, we conclude \eqref{e:homo_estimate}. \end{proof}
\begin{lemma}\label{lem:Skorokhod_terms}
Let $p\geq1$. With notation as in the proof of Proposition~\ref{prop:rho_homogenisation}, for any $\beta \leq \beta_1(3p)\wedge \beta_0(3p)$, for $i = 2, 3$, we have    
the following uniformly in $x, y \in \R^d$ and $t>s\geq 0$:
	\begin{equs}
		\| E_2(s, t, y, x) \|_p &\lesssim  \beta \, (1 + t-s )^{\frac {2-\kappa} 4}\;, \\
		\| E_3(s, t, y, x) \|_p &\lesssim \beta \, (1 + t-s )^{1 - \frac {\kappa} 2}\;. 
	\end{equs}
\end{lemma}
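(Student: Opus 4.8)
The plan is to estimate $E_3$ directly by Minkowski's and Hölder's inequalities, and to estimate $E_2$ by writing it as a Skorokhod integral $E_2=\delta(g)$ and applying the Meyer-type $L^p$ bound $\|\delta(g)\|_p\lesssim\|g\|_{\bD^{1,p}(\cH)}$ (see \cite[Prop.~1.5.4]{nualart2006malliavin}). A preliminary ingredient, needed for both, is the Malliavin derivative of the Green's function: linearising the mild equation \eqref{e:defn_w} for $w_{s,y}$ and using closability of $D$, exactly as in the derivation of \eqref{e:MallDeru}, one finds $D_{r,z}w_{s_0,y_0}(t,x)=\beta\,w_{s_0,y_0}(r,z)\,w_{r,z}(t,x)\,\1_{(s_0,t]}(r)$; integrating over $x$ and using \eqref{e:defn_Csy} this gives $D_{r,z}C_{s_0,y_0}(t)=\beta\,w_{s_0,y_0}(r,z)\,C_{r,z}(t)\,\1_{(s_0,t]}(r)$. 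Throughout I would use the pointwise moment bounds $\|\vec Z(r,z)\|_q\lesssim1$, $\|w_{s,y}(r,z)\|_q\lesssim P_{r-s}(z-y)$ and $\|C_{r,z}(t)\|_q\lesssim1$ of Lemma~\ref{lem:moments}, the increment bound $\|C_{s,y}(t)-C_{s,y}(r)\|_q\lesssim\beta(1+r-s)^{(2-\kappa)/4}$ for $r\le t$ (from \eqref{e:estimate_Csy}, since then $t\wedge r=r$), the covariance estimates of Lemma~\ref{lem:gaussian_cov_est}, and the elementary decay bound \eqref{e:alpha_beta_decay}; Hölder is applied with up to six factors at exponent $3p$, which is why one needs $\beta\le\beta_1(3p)\wedge\beta_0(3p)$.

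For $E_3$: Minkowski followed by Hölder and the above moment bounds (applied to $\vec Z(r,z_1)$, $w_{s,y}(r,z_2)$, $C_{r,z_2}(t)$) yields
\begin{equ}
\|E_3\|_p\;\lesssim\;\beta\int_s^t\int_{\R^{2d}}P_{t-r}(x-z_1)\,P_{r-s}(z_2-y)\,|R(z_1-z_2)|\,dz\,dr\;,
\end{equ}
and, after the translation $z_i\mapsto z_i+y$, this is bounded by \eqref{eq:both_ends_cov} (with $x$ there replaced by $x-y$) by $\beta\,(t-s)\,(1+|x-y|+\sqrt{t-s})^{-\kappa}\lesssim\beta\,(1+t-s)^{1-\kappa/2}$, the asserted bound.

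For $E_2=\delta(g)$ with $g(r,z)=P_{t-r}(x-z)\,\vec Z(r,z)\,\bigl(C_{s,y}(t)-C_{s,y}(r)\bigr)\,\1_{[s,t]}(r)$, it suffices to bound $\E\|g\|_\cH^p$ and $\E\|Dg\|_{\cH\otimes\cH}^p$. From $\|g(r,z)\|_p\lesssim\beta\,P_{t-r}(x-z)\,(1+r-s)^{(2-\kappa)/4}$, Minkowski in $L^{p/2}$, the convolution identity $\int_{\R^{2d}}P_{t-r}(x-z_1)P_{t-r}(x-z_2)R(z_1-z_2)\,dz=\int_{\R^d}P_{2(t-r)}(c)R(c)\,dc\lesssim(1+t-r)^{-\kappa/2}$ (by \eqref{eq:time_decay_est}), and then \eqref{e:alpha_beta_decay}, one gets $\E\|g\|_\cH^p\lesssim\beta^p(1+t-s)^{p(2-\kappa)/4}$. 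Differentiating $g$ via \eqref{e:MallDerZ} and the formula for $DC_{s,y}$ above, $D_{r',z'}g(r,z)$ splits into a term carrying $D\vec Z(r,z)$ (supported in $r'\le r$) and a term carrying $D(C_{s,y}(t)-C_{s,y}(r))$ (supported in $r'\in[s,t]$, itself split according to whether $r'\le r$ or $r'>r$); each resulting term is $P_{t-r}(x-z)$ times $\beta$ times a product of two or three of the fields $\vec Z,w,C$, possibly times a $C$-increment, so each carries at least the same power of $\beta$ as $g$. For each I would estimate the $\cH\otimes\cH$-norm by the same scheme: Minkowski in $L^{p/2}$, Hölder into factors at exponent $3p$, then the spatial integrals, then the time integrals via \eqref{e:alpha_beta_decay}. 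The one point requiring care is to carry out the spatial integration without losing decay: for the $D\vec Z$ term, after integrating the innermost pair of space variables one is left with a factor $(P_{2(r-r')}\ast R)(z_1-z_2)\lesssim(1+\sqrt{r-r'})^{-\kappa}$ (by \eqref{eq:time_decay_est}, dropping the $|z_1-z_2|$), which may be pulled out; integrating the remaining $\int P_{t-r}(x-z_1)P_{t-r}(x-z_2)R(z_1-z_2)\,dz\lesssim(1+\sqrt{t-r})^{-\kappa}$ (again \eqref{eq:time_decay_est}) leaves a product of decays $(1+r-r')^{-\kappa/2}(1+t-r)^{-\kappa/2}$; then $\int_{-\infty}^r(1+r-r')^{-\kappa/2}\,dr'$ is finite since $\kappa>2$, and \eqref{e:alpha_beta_decay} disposes of the $dr$-integral, giving a bound $\beta^4(1+t-s)^{(2-\kappa)/2}$ for the squared norm. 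The other sub-terms (handled analogously, using the increment bound on $C$ where it appears) give at most $\E\|Dg\|_{\cH\otimes\cH}^p\lesssim\beta^p(1+t-s)^{p(2-\kappa)/4}$. Combining, $\|E_2\|_p=\|\delta(g)\|_p\lesssim\|g\|_{\bD^{1,p}(\cH)}\lesssim\beta\,(1+t-s)^{(2-\kappa)/4}$.

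The main obstacle I anticipate is precisely the $Dg$ term in the $E_2$ estimate: it decomposes into several sub-terms, and for each one must pair heat kernels correctly, insert the two covariance factors, perform the spatial integration sharply (using the semigroup/convolution structure rather than crude bounds, as above), and then integrate out the two time variables, checking in every case that the exponent of $(1+t-s)$ does not exceed $(2-\kappa)/4$ — which relies on $\kappa>2$ throughout. A minor additional point is to justify rigorously that $w_{s,y}(t,\cdot)$ and $C_{s,y}(t)$ belong to $\bD^{1,p}$ with the function-valued Malliavin derivatives displayed above, which follows from the same linearisation-and-closability argument as for $u$ in the proof of Lemma~\ref{lem:moments}.
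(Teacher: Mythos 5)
Your proposal is correct and follows essentially the same route as the paper: $E_3$ is bounded directly via Minkowski, H\"older at exponent $3p$ and \eqref{eq:both_ends_cov}, while $E_2$ is written as $\delta(F)$ with $F(r,z)=P_{t-r}(x-z)\vec Z(r,z)(C_{s,y}(t)-C_{s,y}(r))\1_{[s,t]}(r)$ and controlled through the continuity of $\delta\colon\bD^{1,p}(\cH)\to L^p$, with $DF$ split into exactly the same three sub-terms (derivative hitting $\vec Z$, and derivative hitting the $C$-increment according to whether $u\le r$ or $u>r$) and the spatial/temporal integrals handled by Lemma~\ref{lem:gaussian_cov_est} and \eqref{e:alpha_beta_decay}. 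No gaps.
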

\begin{proof}
Let us start with the Skorokhod correction term $E_3$. 
Similarly to Lemma~\ref{lem:moments}, where we obtained $D_{s,y}u(t,x) = \beta u(s,y)w_{s,y}(t,x)\1_{[0,t]}(s)$, 
one has
	\begin{equ}
		D_{r,z} w_{s,y}(t,x) = \beta w_{s,y}(r,z) w_{r,z}(t,x) \1_{[s, t]}(r)\;,
	\end{equ}
 so that, integrating over $x$,
	\begin{equ}[e:MallDerC]
		D_{r,z} C_{s,y}(t) = \beta w_{s,y}(r,z) C_{r,z}(t)\1_{[s, t]}(r)\;.
	\end{equ}
	Therefore, the term $E_3$ is given by
	\begin{equs}
	E_3=	\int_{s}^t \int_{\R^{2d}}  &P_{t-r}(x-z_1) \vec Z(r,z_1)\, D_{r,z_2} C_{s,y}(t)R(z_1-z_2 ) \,dr \,dz\\
		&=
		\beta \int_{s}^t \int_{\R^{2d}} P_{t-r}(x-z_1)\vec Z(r,z_1)\,w_{s,y}(r,z_2) C_{r,z_2}(t)R(z_1-z_2) \,dr \,dz\;.
%		&=
%		\beta \int_{s}^t \int_{\R^{2d}} P_{t-r}(x-z_1) P_{r-s}(z'_2-y)  Z(r,z_1)\,\tilde w_{s,y}(r,z_2) C_{r,z_2}(t)  R(z_1-z_2) \,dr \,dz\;.
	\end{equs}
%	\begin{equs}
%		\int_{s}^t \int_{\R^{2d}} &P_{t-r}(x-z) Z(r,z)\, D_{r,z'}C_{s,y}(t)R(z-z')\,dr\,dz\,dz'\\
%		&=
%		\beta \int_{s}^t \int_{\R^{2d}} P_{t-r}(x-z) Z(r,z)\,w_{s,y}(r,z') C_{r,z'}(t)R(z-z')\,dr\,dz\,dz'\\
%		&=
%		\beta \int_{s}^t \int_{\R^{2d}} P_{t-r}(x-z)R(z-z')P_{r-s}(z'-y) Z(r,z)\,\tilde w_{s,y}(r,z') C_{r,z'}(t)\,dr\,dz\,dz'\;.
%	\end{equs}
	Using the uniform moments estimates on $\vec Z$, 
	%$\tilde w$ 
	$w$
	and $C$ from Lemma~\ref{lem:moments}, 
	by Minkowski and H\"{o}lder's inequalities, followed by \eqref{eq:both_ends_cov} we obtain for $\beta \leq \beta_0(3p)$: 
%	Given the fact that for $\beta$ sufficiently small 
%	$Z$, $\tilde w$ and $C$ all have uniformly bounded moments, we derive the estimate by \eqref{eq:both_ends_cov}:
	\begin{equs}
		\| E_3 \|_p  &\leq \beta \int_{s}^t \int_{\R^{2d}} P_{t-r}(x-z_1)   \| \vec Z(r,z_1) w_{s,y}(r,z_2) C_{r,z_2}(t) \|_p  R(z_1-z_2) \,dr \,dz\\
		&\lesssim \beta \int_{s}^t  \int_{\R^{2d}} P_{t-r}(x-z_1) P_{r-s}(z_2-y)   R(z_1-z_2) \,dr \,dz \\ & \lesssim \beta (1 + t - s)^{1 - \f { \kappa } 2}\;.
	\end{equs}
	
	To estimate the $L^p$ norm of $E_2$, we use the continuity of the Skorokhod integral $\delta : \bD^{1,p}(\cH) \to L^p$ \cite[Prop.~1.5.4]{nualart2006malliavin}. Letting 
	\begin{equ}
	F_{s,t, x,y}(r,z)  =  P_{t-r}(x-z) \vec Z(r,z)  \big( C_{s,y}(t) - C_{s,y}(r)\big) \1_{[s, t]}(r) \;, 
	\end{equ}
	we have $E_2 = \delta (F_{s,t, x,y})$. Therefore, dropping subscripts of $F$ going forward, we have
	\begin{equ} [e:E2_norm_bound]
		\| E_2\|_p \lesssim \| F \|_{L^p(\Omega, \cH)} + \| D F \|_{L^p(\Omega, \cH \otimes \cH)}\;.
	\end{equ}
	We see that $F$ is a pointwise product of a Malliavin differentiable random variable $M$ and a deterministic element $h \in \cH$. Namely, $F(r,z) = M(r,z) h(r,z)$, where
	\begin{equ}
		M(r,z) = \vec Z(r,z)  \big( C_{s,y}(t) - C_{s,y}(r)\big)\;, \qquad h(r,z) = P_{t-r}(x-z) \1_{[s, t]}(r)\;.
	\end{equ}

	Given \eqref{e:estimate_Csy}, we note that 
	\begin{equs}
		\| M(r,z)  \|_p  &\lesssim   \| \vec Z(r,z) \|_{2p} \| C_{s,y}(t) - C_{s,y}(r) \|_{2p}  
		\lesssim \beta   (1 + r - s)^ {\f {2 - \kappa} 4} \;. 
	\end{equs}
	With this, using Minkowski inequality, \eqref{eq:time_decay_est} and \eqref{e:alpha_beta_decay}, we bound the first term: %in \eqref{e:E2_norm_bound}, 
	\begin{equs}
		\| F &\|_{L^p(\Omega, \cH)} = \E \Big[  \big(\int_{\R}  \int_{\R^{2d}} F(r,z_1)F(r,z_2) R(z_1-z_2)\,dr\,dz \big)^{\f p 2}  \Big] ^{\f 1 p} \\
		& \;\leq \Big(  \int_{s}^t  \int_{\R^{2d}}\| M(r,z_1) M(r,z_2)  \|_{\f p 2} \, h(r,z_1) h(r,z_2)  R(z_1-z_2)\,dr\,dz  \Big)^{\f 1 2} \\
		& \;\lesssim \Big( \beta^2 \int_{s}^t (1 + r - s)^ {1 - \f  \kappa 2} \int_{\R^{2d}} P_{t-r}(x-z_1 ) P_{t-r}(x-z_2) R(z_1-z_2)\,dr\,dz \Big)^{\f 1 2} \\
		& \;\lesssim \beta   \Big( \int_{s}^t  (1 + r - s)^{1 - \f \kappa 2}   (1 + t - r)^ {- \f \kappa 2}  \,dr \Big)^{\f 1 2}  \lesssim  \beta (1 + t - s)^ { \f {2 - \kappa} 4}\;.
	\end{equs}
	For the derivative term, by Minkowski and H\"{o}lder's inequalities we have
	\begin{equs}%[e:DF_bound]
		\| &DF \|_{L^p(\Omega, \cH \otimes \cH)}\\
		 &= \E \Big[  \big( \int_{\R} \int_{\R^{2d}} \langle  D M(r, z_1) , D M(r, z_2) \rangle_{\cH}  \, \, h(r,z_1) h(r,z_2) R(z_1 - z_2) \, dz\, dr  \big)^{\f p 2}\Big] ^{\f 1 p} \\
		& \leq \Big( \int_{s}^t \int_{\R^{2d}}   \| \langle  D M(r, z_1) , D M(r, z_2) \rangle_{\cH} \|_{\f p 2} \, h(r,z_1) h(r,z_2) R(z_1 - z_2)  \, dz\, dr \Big)^{\f 1 2} \\
		& \leq \Big(\int_{s}^t \int_{\R^{2d}} \Bigl( \prod_{i = 1}^2 \|   D M(r, z_i) \|_{L^{p}(\Omega, \cH)} \, h(r,z_i)\Bigr) R(z_1 - z_2)  \, dz\, dr \Big)^{\f 1 2}\;. \\ 
	\end{equs}
	We note that $\vec Z$ and $C$'s are adapted, satisfying identities \eqref{e:MallDerZ} and \eqref{e:MallDerC} respectively. Then,  
	\begin{equs}
		D_{u, \bar z} M(r, z) &= D_{u, \bar z}\vec Z(r,z) \,  \big( C_{s,y}(t) - C_{s,y}(r)\big) +  \vec Z(r,z) \,  D_{u, \bar z} \big( C_{s,y}(t) - C_{s,y}(r)\big) \\
		& = \beta w_{u, \bar z} (r,z)  \vec Z(u, \bar z) \big( C_{s,y}(t) - C_{s,y}(r)\big) \1_{[s, r]}(u)  \\[0.5em]
		& \quad +  \beta \vec Z(r,z) w_{s,y} (u, \bar z)  \big( C_{u, \bar z}(t) - C_{u, \bar z}(r)\big) \1_{[s, r]}(u) \\
		& \quad + \beta  \vec Z(r,z) w_{s,y} (u, \bar z) C_{u, \bar z}(t)  \1_{[r, t]}(u) =: \beta \sum_{j = 1}^3 m^{(j)}_{r,z}(u,\bar z )\;.
	\end{equs}
	By Minkowski and H\"{o}lder inequalities 
	\begin{equ}[e:DM_bound]
		\|   D M(r, z) \|_{L^{p}(\Omega, \cH)} \lesssim \sum_{j, k = 1}^3 \Big(  \int_{s}^t \int_{\R^{2d}} \| m^{(j)}_{r,z}(u,\bar z_1 )\|_p  \| m^{(k)}_{r,z}(u,\bar z_2 )\|_p 
		\,  R(\bar z_1 - \bar z_2)\, d\bar z\, du\Big)^{\f 1 2}\;.
	\end{equ}
	Given the a priori estimates from Lemma~\ref{lem:moments}, for $\beta \leq \beta_0(3p)$ we have:
	\begin{equs}
		 \| m^{(1)}_{r,z}(u,\bar z )\|_p &\lesssim P_{r- u}(z - \bar z) (1+ r -s)^{\f {2 - \kappa }4} \1_{[s, r]}(u) \;, \\
		 \| m^{(2)}_{r,z}(u,\bar z )\|_p &\lesssim P_{u-s}(\bar z - y) (1+ r -u)^{\f {2 - \kappa }4} \1_{[s, r]}(u) \;,\\
		 \| m^{(3)}_{r,z}(u,\bar z )\|_p &\lesssim  P_{u-s}(\bar z - y) \1_{[r, t]}(u)\;.
	\end{equs}
	Hence, by Lemma~\ref{lem:gaussian_cov_est}, for $j = k =1$ we obtain
	\begin{equs}
		\int_{s}^t &\int_{\R^{2d}} \Bigl(\prod_{i = 1}^2  \| m^{(1)}_{r,z}(u,\bar z_i )\|_p \Bigr)  R(\bar z_1 - \bar z_2)\, d\bar z\, du \\
		&\lesssim  (1 + r -s)^{1 -\f { \kappa }2}  	\int_{s}^r \int_{\R^{2d}} \Bigl(\prod_{i=1}^2 P_{r- u}(z - \bar z_i) \Bigr)  R(\bar z_1 - \bar z_2)\, d\bar z\, du \lesssim (1 + r -s)^{1 -\f { \kappa }2}  \;.
	\end{equs}
	On the other hand, by  \eqref{eq:time_decay_est} and \eqref{e:alpha_beta_decay}, 
	\begin{equs}
		\int_{s}^t &\int_{\R^{2d}}  \| m^{(1)}_{r,z}(u,\bar z_1 )\|_p  \| m^{(2)}_{r,z}(u,\bar z_2 )\|_p  \,  R(\bar z_1 - \bar z_2)\, d\bar z\, du \\
		&\lesssim  (1 + r -s)^{\frac {2 - \kappa} 4}  	\int_{s}^r (1 + r -u)^{\frac {2 - \kappa} 4} \int_{\R^{2d}}  P_{r- u}(z - \bar z_1)  P_{u-s}(\bar z_2-y) \,  R(\bar z_1 - \bar z_2)\, d\bar z\, du \\
		& \lesssim (1 + r -s)^{\frac {2 - \kappa} 4}   \int_{s}^r (1 + r -u)^{\frac {2 - \kappa} 4}  (1 + u-s)^{-\frac \kappa 2} \, du \lesssim  (1 + r -s)^{1 -\f { \kappa}2}  \;.
	\end{equs}
	Estimating in a similar way the other terms in \eqref{e:DM_bound} involving $m^{(2)}_{r,z}$ and $m^{(3)}_{r,z}$, we obtain 
	\begin{equ}
			\|   D M(r, z) \|_{L^{p}(\Omega, \cH)} \lesssim  \beta (1+ r -s)^{\f {2 - \kappa}4}\;.
	\end{equ}
	With this, using again \eqref{eq:time_decay_est} and \eqref{e:alpha_beta_decay}, we return to estimate $DF$:
	\begin{equs}
		&\| DF \|_{L^p(\Omega, \cH \otimes \cH)} \\
		  &\quad \lesssim  
		\beta \Big(
		\int_{s}^t  (1+ r -s)^{1 - \f \kappa 2}\int_{\R^{2d}}  P_{t-r}(x-z_1 ) P_{t-r}(x-z_2) R(z_1-z_2)\,dr\,dz   \, dz\, dr  \Big)^{\f 1 2} \\
		&\quad \lesssim \beta \Big(
		\int_{s}^t   (1+ r -s)^{1 - \f \kappa 2} (1 + t -r)^{- \f \kappa 2} \, dr \Big)^{\f 1 2} \lesssim \beta (1 + t - s)^{\f {2 - \kappa }4}\;.
	\end{equs}
	Gathering bounds back to \eqref{e:E2_norm_bound}, we obtain $\| E_2(s, t, y, x) \|_p \lesssim \beta \, (1 + t-s )^{\frac {2-\kappa} 4}$.
\end{proof}

The following kernel estimate leads to the $L^p$ homogenisation rate.  

\begin{lemma}\label{lem:Kernel_P_bound}
	Let $x\in \R^d$, $t >0$,  $P_r(x)$ the heat kernel, and let
	\begin{equ}
		H_{t, x}(r,z) = P_{r(t-r)/t} (z+ \tfrac r t x) - P_{r}(z) - P_{t-r}(z+x)\;.
	\end{equ}
	Let $\tilde \mu = 1 -\f 2\kappa$, then %for $\tilde \mu \in (0,  1 -\f 2\kappa]$,
	the following holds:
	\begin{equ}
		J_{t}(x) := \int_0^t \int_{\R^{2d}}	\Bigl(\prod_{i = 1}^2 | H_{t, x}(r,z_i) | \Bigr) R(z_1 - z_2) \, dz \,dr  
		\lesssim (1 + t)^{-\tilde \mu} (1 + (1 + t)^{-1}|x|^2)\;.
	\end{equ}
\end{lemma}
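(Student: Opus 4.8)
The plan is to estimate the double integral defining $J_t(x)$ directly, extracting a cancellation hidden inside $H_{t,x}$. Since $R$ is bounded and $\|H_{t,x}(r,\cdot)\|_{L^1} \lesssim 1$ uniformly, the bound is trivial when $t \leq 1$, so assume $t \geq 1$. Write $\sigma_r = r(t-r)/t$ and $m_r = (r/t)x$, so the first term of $H_{t,x}(r,\cdot)$ is the Brownian bridge kernel $P_{\sigma_r}(\cdot + m_r)$; using that $P$ and $R$ are even one checks that the inner double integral $\int_{\R^{2d}}|H_{t,x}(r,z_1)|\,|H_{t,x}(r,z_2)|\,R(z_1-z_2)\,dz$ is invariant under $r \mapsto t-r$, so it suffices to bound its integral over $r \in [0,t/2]$. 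On that range I group $H_{t,x}(r,\cdot) = \bigl(P_{\sigma_r}(\cdot+m_r) - P_r\bigr) - P_{t-r}(\cdot+x)$: since $\sigma_r \in [r/2,r]$ and $|m_r| \leq |x|/2$ there, the first bracket $A_r := |P_{\sigma_r}(\cdot+m_r) - P_r|$ is a difference of two comparable heat kernels, while $B_r := P_{t-r}(\cdot+x)$, though not small, is spread over scale $\sqrt t$. Since $R$ is positive semi-definite, writing $\langle f,g\rangle_R = \int_{\R^{2d}}f(z_1)g(z_2)R(z_1-z_2)\,dz$ one has $\int_{\R^{2d}}|H_{t,x}(r,z_1)|\,|H_{t,x}(r,z_2)|\,R(z_1-z_2)\,dz \leq 2\langle A_r,A_r\rangle_R + 2\langle B_r,B_r\rangle_R$, so it remains to bound these two quantities and integrate in $r$.

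The term involving $B_r$ is immediate: $\langle B_r,B_r\rangle_R = \int_{\R^d}P_{2(t-r)}(w)R(w)\,dw \lesssim (1+\sqrt{t-r})^{-\kappa}$ by the $x=0$ case of \eqref{eq:time_decay_est}, so its contribution to $J_t(x)$ is $\lesssim t\,(1+t)^{-\kappa/2} \lesssim (1+t)^{1-\kappa/2}$, and $(1+t)^{1-\kappa/2} \leq (1+t)^{-\tilde\mu}$ because $(\kappa-2)^2 \geq 0$. For $A_r$ I would use the elementary difference-of-heat-kernels estimate $|P_a(z+h) - P_b(z)| \lesssim \bigl(\tfrac{|a-b|}{a} + \tfrac{|h|}{\sqrt a}\bigr)Q_b(z)$, valid for $a \leq b \leq 2a$ and $|h| \leq \sqrt a$, with $Q_b$ a fixed Gaussian comparable to $P_{Cb}$ (proved by differentiating $t \mapsto P_t$ and $z \mapsto P_t(z)$ along a path joining $(a,h)$ to $(b,0)$); combined with the trivial bound $A_r \leq P_{\sigma_r}(\cdot+m_r) + P_r$ and \eqref{eq:time_decay_est}-type bounds for $\langle Q_b,Q_b\rangle_R$ and $\langle P_{\sigma_r}(\cdot+m_r),P_{\sigma_r}(\cdot+m_r)\rangle_R$, this gives $\langle A_r,A_r\rangle_R \lesssim \psi_t(r)\,(1+r)^{-\kappa/2}$ with $\psi_t(r) := 1 \wedge \bigl(\tfrac rt + \tfrac{\sqrt r\,|x|}{t}\bigr)^2$ (the two alternatives corresponding respectively to the displacement $|m_r|$ exceeding or not the bridge width $\sqrt{\sigma_r}$).

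It then remains to show $\int_0^{t/2}\psi_t(r)(1+r)^{-\kappa/2}\,dr \lesssim (1+t)^{-\tilde\mu}\bigl(1 + (1+t)^{-1}|x|^2\bigr)$. I would split $[0,t/2]$ at $r_\star := t^2/|x|^2$, below which $\tfrac{\sqrt r\,|x|}{t} \leq 1$ so that $\psi_t(r) \lesssim \tfrac{r^2}{t^2} + \tfrac{r|x|^2}{t^2}$, and above which $\psi_t(r) = 1$; together with the elementary bound $\int_0^S r^j(1+r)^{-\kappa/2}\,dr \lesssim (1+S)^{(j+1-\kappa/2)\vee 0}$ (a logarithm in the borderline case being absorbed into an arbitrarily small power, which is harmless since $\kappa > 2$), this reduces the estimate to comparing powers of $t$ and $|x|$ separately in the three regimes $|x|^2 \leq 2t$, $2t < |x|^2 \leq t^2$, and $|x|^2 > t^2$.

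The only place where anything needs genuine care is this final bookkeeping in the large-displacement regime $|x| \gtrsim \sqrt t$: there the pointwise difference estimate degenerates because $P_{\sigma_r}(\cdot+m_r)$ and $P_r$ stop overlapping rather than being close, which forces the truncation at $r_\star$ and the fallback to the crude $L^1$ bound; and one must verify that, even so, all exponents close — which in each regime comes down to the single inequality $(\kappa-2)^2 > 0$, valid precisely because $\kappa \in (2,d)$.
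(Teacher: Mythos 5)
Your proposal is correct, and while it shares the paper's core mechanism -- peel off the far kernel $P_{t-r}(\cdot+x)$, and control the bridge-versus-heat-kernel difference $A_r=|P_{\sigma_r}(\cdot+m_r)-P_r|$ by differentiating along an interpolating path and then pairing against $R$ via \eqref{eq:time_decay_est} -- the time decomposition is genuinely different. The paper introduces an intermediate scale $t^{\alpha}$ with $\alpha=2/\kappa$, runs the derivative estimate only on $[0,t^\alpha]$ (where $r/t\le t^{\alpha-1}$ is uniformly small, so no case analysis in $|x|$ is needed there), and disposes of the bulk $[t^\alpha,t-t^\alpha]$ by the crude bound $\int|H(r,z_2)|R(z_1-z_2)\,dz_2\lesssim r^{-\kappa/2}$; it is precisely the balancing of $t^{\alpha-1}$ against $t^{\alpha(1-\kappa/2)}$ that produces $\tilde\mu=1-\f2\kappa$. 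You instead work on all of $[0,t/2]$ with the $x$-dependent threshold $r_\star=t^2/|x|^2$ beyond which the pointwise difference estimate degenerates, paying for it with the three-regime bookkeeping at the end. Your route buys a genuinely sharper first factor (your dominant terms are $t^{1-\kappa/2}$ and, in the intermediate regime, $(|x|^2/t^2)^{\kappa/2-1}\lesssim t^{-\tilde\mu}\,t^{-1}|x|^2$, both of which beat $t^{-\tilde\mu}$ strictly for $\kappa>2$ -- which is exactly why every exponent comparison closes with the slack $(\kappa-2)^2\ge 0$), while the paper's buys a shorter computation. Two small caveats, neither fatal: your reduction to the diagonal terms $\langle A_r,A_r\rangle_R$ and $\langle B_r,B_r\rangle_R$ uses both $R\ge 0$ pointwise and positive semi-definiteness of $R$, neither of which is literally part of Assumption~\ref{assump-cov-KPZ}; but the cross term is harmless anyway, since $\langle A_r,B_r\rangle\lesssim \|A_r\|_{L^1}\sup_{z_1}\int P_{t-r}(z_2+x)|R(z_1-z_2)|\,dz_2\lesssim (1+t)^{-\kappa/2}$ by \eqref{eq:time_decay_est}, which integrates to the same $(1+t)^{1-\kappa/2}$ as your $B$-diagonal (this is exactly how the paper treats its terms $J^{(i,2)}$). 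Second, the borderline logarithms at $\kappa=4$ and $\kappa=6$ in $\int_0^S r^j(1+r)^{-\kappa/2}dr$ do need the strict inequalities $\tilde\mu<1$ and $\tilde\mu<2$ to be absorbed, which you have; it would be worth saying so explicitly rather than appealing to "an arbitrarily small power".
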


\begin{remark}
Since $P_{r(t-r)/t} (z+ \tfrac r t x) =  q_{0, t-s}^{x,0} (r, z)$, this is really just a 
quantitative way of stating that, when considered over a very large time interval, a Brownian bridge 
is close to a Brownian motion near either end of the interval. Furthermore, the integral $J_t$ is such
that the contribution coming from the ``bulk'' is small.
\end{remark}

\begin{proof}
	Let $\alpha  = \frac 2 \kappa$ such that $\tilde \mu = \alpha (\f \kappa 2 -1)$.
	By Lemma~\ref{lem:gaussian_cov_est}, we have $J_{t, x} \lesssim 1$ uniformly in $t$. Then, in the following we can assume $t > 2^{\f 1 { 1-\alpha}}$. 
	We consider three regions of integration,
	\begin{equ} 
		J_{t}(x)  = J_{0, t^\alpha}(x) + J_{ t^\alpha, t - t^\alpha}(x) + J_{t - t^\alpha, t}(x)\;, 
	\end{equ}
	where 
	\begin{equ}
		J_{t_1, t_2}(x) = \int_{t_1}^{t_2} \int_{\R^{2d}}\Bigl(\prod_{i = 1}^2 | H_{t, x}(r,z_i) | \Bigr) R(z_1 - z_2) \, dz \,dr\;. 
	\end{equ}
	By changing variables $z \mapsto z+x$ and $r \mapsto t -r$, we have $J_{0, t^\alpha}(x)  =  J_{t - t^\alpha, t}(-x)$. Hence, it suffices to estimate $J_{0, t^\alpha}(x)$ and $J_{ t^\alpha, t - t^\alpha}(x)$.

	For $J_{ t^\alpha, t - t^\alpha}(x)$, since $r\in [t^\alpha, t - t^\alpha]$, using \eqref{eq:time_decay_est} for each Gaussian kernel: % we note that the time indices of the Gaussian densities up to a constant are all bounded below by $t^$\alpha$
	\begin{equs}
		J_{ t^\alpha, t - t^\alpha}(x) &=  
		\int_{t^\alpha}^{t - t^\alpha} \int_{\R^{d}} | H_{t, x}(r,z_1) | \Big( \int_{ \R^d} |H_{t, x}(r,z_2) |  \,R(z_1 - z_2) \, dz_2 \Big)\, dz_1 \,dr \\
		& \lesssim  \int_{t^\alpha}^{t - t^\alpha} r^{- \f \kappa 2}  \int_{\R^{d}} | H_{t, x}(r,z_1) | \, dz_1 \, dr \lesssim  t^{\alpha(1 - \f \kappa 2)}\;.
	\end{equs}
	
	In the remainder we estimate $J_{0, t^\alpha}(x)$. For this, we split $H_{t,x}$ into two terms:
	\begin{equ}
		|H_{t,x}(r,z)| = H^1_{t,x}(r,z) + H^2_{t,x}(r,z)\;, 
	\end{equ} 
	where 
	\begin{equ}
		H^{(1)}_{t,x}(r,z) = | P_{r(t-r)/t} (z+ \tfrac r t x) - P_{r}(z) | \;, \qquad H^{(2)}_{t,x}(r,z) = P_{t-r}(z+x)\;.
	\end{equ}
	So that, 
	\begin{equ}
		J_{0, t^\alpha}(x) \leq \sum_{i,j = 1}^2 \int_{0}^{t^\alpha} \int_{\R^{2d}} H^{(i)}_{t,x}(r,z_1) H^{(j)}_{t,x}(r,z_2) \,R(z_1 - z_2) \, dz \,dr =  \sum_{i,j = 1}^2 J^{(i,j)}_{0, t^\alpha}(x)\;.
	\end{equ}
	By \eqref{eq:time_decay_est} from Lemma~\ref{lem:gaussian_cov_est}, we have uniformly in $z_1$:
	\begin{equ}
		\int_{\R^d} H^{(2)}_{t,x}(r,z_2) \,R(z_1 - z_2) \, dz_2 \lesssim (1 + t - r)^{-\f \kappa 2}\;.
	\end{equ}
	Hence, recalling $t > 2^{\f 1 { 1-\alpha}}$, for any $i = 1, 2$, we have 
	\begin{equ}
		J^{(i,2)}_{0, t^\alpha}(x) \lesssim \int_{0}^{t^\alpha} (t - r)^{-\f \kappa 2} \int_{\R^d}  H^{(i)}_{t,x}(r,z_1) \, dz_1 \, dr \lesssim (t - t^\alpha)^{1-\f \kappa 2}
		\lesssim t^{1-\f \kappa 2}\;.
	\end{equ}
	We now need to estimate $J^{(1,1)}_{0, t^\alpha}(x)$. We rewrite 
	\begin{equ}
		H^{(1)}_{t,x}(r,z) = \big|P_{r(t-r)/t} (z+ \tfrac r t x) - P_{r}(z)\big| \leq \int_0^1 \big| \d_\theta\, P_{r - \theta \f {r^2}  t} (z + \theta \tfrac r t x) \big| \, d\theta\;.
	\end{equ}
	Letting $v = r^2 / t$ and $w = r x /t$, with $\theta \in [0,1]$ we can compute 
	\begin{equs}
		\big| \d_\theta\,	P_{r - \theta v} (z + \theta w)\big| &=  \Big|  \frac{d  v}{2(r  - \theta v)} +  \frac{v |z + \theta w|^2 }{2(r  - \theta v)^2}  - \frac{\langle z, w\rangle + \theta |w|^2 }{r  - \theta v} \Big| P_{r - \theta v} (z + \theta w) \\
		& \lesssim  \Big( \frac{ v}{r  -  v}  + \frac{v ( |z|^2 + |w|^2) }{(r  -  v)^2} + \frac{ | z| | w| + |w|^2 }{r  -  v} 
		\Big) P_{r - \theta v} (z + \theta w) \;.
	\end{equs}
	Given $r < t^{\alpha}$ and $t > 2^{\f 1 { 1-\alpha}}$, we have 
	\begin{equs}
		\frac{ v}{r  -  v} = \frac { r/t} {1 - r/t} \lesssim t^{\alpha -1}\;, \qquad&|w| \leq t^{\alpha -1} |x|\;,\\
		 \frac{v}{(r  -  v)^2} \leq  \frac{v}{r^2  - 2rv}  \lesssim t^{-1}\;, \qquad & \frac {|w| }{r  -  v}= \frac { r |x|/t } {r -  r^2/t} \lesssim t^{-1} |x|\;.
	\end{equs}
	Therefore, letting $\mu = \alpha (\kappa/2 -1)/2 = \tilde \mu / 2$, we have the following bound
	\begin{equs}
		\big| \d_\theta\, &P_{r - \theta r^2 / t} (z + \theta \tfrac r t x) \big| / P_{r - \theta r^2 / t} (z + \theta \tfrac r t x)  \\ 
		& \lesssim t^{\alpha -1} + t^{-1} \big( |z|^2 + t^{2(\alpha -1)}|x|^2 \big) + t^{-1} \big( |x||z| + t^{\alpha -1}|x|^2 \big)\\
		& \lesssim t^{\alpha -1} + t^{-1} \big( |z|^2+ |x||z| + t^{\alpha -1}|x|^2 \big) \\
		& \lesssim t^{\alpha -1} + ( t^{-1-2\mu} + t^{\alpha -2})|x|^2 + (t^{-1} + t^{-1+2\mu} )|z|^2 \\
		& \lesssim  t^{\alpha -1} + t^{-1-2\mu} |x|^2  + t^{-1+2\mu}|z|^2\;,
	\end{equs}
	where we used the fact that $|x||z| \lesssim t^{-2\mu} |x|^2 + t^{2\mu} |z|^2$ and $\alpha -1 \leq -2\mu$ when $\alpha \leq 2/\kappa$.
	Thus, 
	\begin{equ}\label{eq:H1_bound}
		H^{(1)}_{t,x}(r,z) \lesssim  \big( t^{\alpha -1} + t^{-1-2\mu} |x|^2  + t^{-1+2\mu}|z|^2\big) \int_0^1 P_{r - \theta r^2 / t} (z + \theta \tfrac r t x) \, d\theta\;.
	\end{equ}
	Uniformly in $\theta$, by Lemma~\ref{lem:gaussian_cov_est} and since $\int_{ \R^d} |z|^2 P_r(z+x) dz  \lesssim r + |x|^2$, we have
	\begin{equs}
		\int_{0}^{t^\alpha} &\int_{\R^{2d}} |z_1|^2  P_{r - \theta r^2 / t} (z_1 + \theta \tfrac r t x) \, H^{(1)}_{t,x}(r,z_2)  \, R(z_1 -z_2)\,dr\, dz  \\
		& = \int_{0}^{t^\alpha} \int_{\R^d} |z_1|^2  P_{r - \theta r^2 / t} (z_1 + \theta \tfrac r t x) \Big(  \int_{ \R^d}  H^{(1)}_{t,x}(r,z_2)  R(z_1 -z_2) \,dz_2\Big) \, dz_1 \, dr \\
		& \lesssim \int_{0}^{t^\alpha} \big(( r - \theta \tfrac {r^2}  t) + \theta^2 \tfrac {r^2}  {t^2} |x|^2 \big) \big( (r(t-r)/t)^{-\f \kappa 2} + r^{-\f \kappa 2})\big) \, dr\\
		& \lesssim  \int_{0}^{t^\alpha} r^{1 -\f \kappa 2}  +  r^{2-\f \kappa 2} {t^{-2}} |x|^2  \, dr \lesssim t^{\alpha(2 - \f \kappa 2)}  + t^{\alpha(3 - \f \kappa 2) -2} |x|^2 \; =\; t^{\alpha - 2\mu} + t^{2\alpha - 2 -2\mu} |x|^2\;,
	\end{equs}
	and 
	\begin{equ}
		\int_{0}^{t^\alpha} \int_{\R^{2d}}  P_{r - \theta r^2 / t} (z_1 + \theta \tfrac r t x) H^{(1)}_{t,x}(r,z_2)  \, R(z_1 -z_2)\,dr\, dz \lesssim 1\;, 
	\end{equ}
	With \eqref{eq:H1_bound} and these estimates, we obtain
	\begin{equ}
		J^{(1,1)}_{0, t^\alpha}(x) \lesssim  t^{\alpha -1} + t^{-1-2\mu} |x|^2  + t^{2\alpha - 3} |x|^2 \lesssim t^{\alpha -1} + t^{-1-2\mu} |x|^2\;.
	\end{equ}
	Therefore, since $\max(1-\f \kappa 2, \alpha - 1 )\leq -2\mu$, we obtain
	\begin{equ}
		J_{0, t^\alpha}(x) \lesssim t^{1-\f \kappa 2} + t^{\alpha -1} + t^{-1-2\mu} |x|^2 \lesssim  t^{-2\mu} + t^{-1-2\mu} |x|^2 \;. 
	\end{equ}
%	Since $2\mu = \tilde \mu$, 
	Combining the estimates of $J_{0, t^\alpha}(x)$ and $J_{ t^\alpha, t-t^\alpha}(x)$, we conclude the proof. 
\end{proof}

\section{Proof of the main results}
\label{sec:main}

In order to prove Theorem~\ref{thm:main-theorem-KPZ}, we are going to consider general transformations of $u$ 
(still as in \eqref{eq:rescaled_mSHE}, i.e.\ with initial condition equal to $1$) and prove the following for the 
class of functions $\fr$  below.
\begin{assumption}\label{transformation}
Let $\fr \in \C^2(\R_+;\R)$ be a function satisfying for some $q >0$,    
	\begin{equ}	\label{eq:ass_transf}
		|\fr(z)| + |\fr'(z)| + |\fr''(z)| \lesssim z^{-q} + z^q\;, \qquad \forall z\in \R_+\;.
	\end{equ}
\end{assumption}

\begin{theorem}\label{thm:gen_transf-convergence}
	Let $d\geq 3$, $\xi$ and $\fr$ satisfy Assumptions~\ref{assump-cov-KPZ} and \ref{transformation} respectively and set 
	\begin{equ}
		u_\eps^\Phi(t,x) = 
		\eps^{1-\frac \kappa 2} \bigl( \fr(u_\eps(t,x))-\E [\fr(u_\eps(t,x))]\bigr)\;.
	\end{equ}
	Then, given any 
%	$\alpha < - \f \kappa 2$ and $p>1$, 
	$p\geq 1$,	$\alpha <  1 - \f \kappa 2$ and $\sigma <  - 1 - \f {\kappa} 2$,
	 there exists $\beta_0^{\fr}(\alpha, p) >0$ and $\nu_{\fr} \geq 0$ such that, 
	for all $\beta < \beta_0^{\fr}$ and any good coupling of the noise, 
	the random processes  
	$u_\eps^\Phi$ converge in probability to $ \nu_{\fr} \cU^0$ in $ L^p([0, T], \C^{\alpha}(E))$ as $\eps \to 0$, for any $T > 0$ and compact $E \subset \R^d$. Here, $\cU^0$ denotes the solution to the additive stochastic heat equation %\eqref{eq:EW_eqn}, driven  by Gaussian noise 
	\begin{equ}
		\partial_t \cU^0 =\ff 12 \Delta \cU^0 + \beta \xi^0\;, \qquad \cU^0(0, \cdot) \equiv 0\;.
	\end{equ}
%	\begin{equ} 
%		\E [\xi^0(t,x) \xi^0(s,y)]= \delta(t-s) |x-y|^{-\kappa}\;.
%	\end{equ}
	The effective variance is given by $\nu_{\fr} = \E [Z \,\fr' (Z)]$,
	where $Z$ denotes a real-valued random variable with the same law as $\vec Z(t,x)$.
\end{theorem}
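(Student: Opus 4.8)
The plan is to obtain an explicit stochastic-integral representation of the centred quantity $\fr(u_\eps(t,x))-\E[\fr(u_\eps(t,x))]$ via the Clark--Ocone formula, to identify its leading order using the homogenisation Theorem~\ref{theo:homogenisation}, and to show that this leading order is exactly $\nu_\fr$ times the additive stochastic heat equation driven by $\xi^\eps$; the statement then follows from $\xi^\eps\to\xi^0$. Concretely, write $\cU^{0,\eps}$ for the solution of $\partial_t\cU^{0,\eps}=\tfrac12\Delta\cU^{0,\eps}+\beta\xi^\eps$ with $\cU^{0,\eps}(0,\cdot)=0$, i.e.\ $\cU^{0,\eps}=\beta P*\xi^\eps$. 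Since $\eps^{-\kappa}R(\cdot/\eps)\lesssim|\cdot|^{-\kappa}$ uniformly in $\eps$ and $\eps^{-\kappa}R(\cdot/\eps)\to|\cdot|^{-\kappa}$ pointwise, the Gaussian fields $\cU^{0,\eps}$ are uniformly bounded in $L^p(\Omega;L^p([0,T],\C^\alpha(E)))$ for $\alpha<1-\f\kappa2$ by a Kolmogorov argument, tight in that space, and converge there in probability to $\cU^0$ under any good coupling (the distributional limit being forced by $\xi^\eps\to\xi^0$). It therefore suffices to prove
\begin{equ}
u_\eps^\fr - \nu_\fr\,\cU^{0,\eps} \longrightarrow 0 \qquad\text{in } L^p\big(\Omega;L^p([0,T],\C^\alpha(E))\big)\,,
\end{equ}
and to note that $\nu_\fr=\E[Z\fr'(Z)]$ is finite by Lemma~\ref{lem:transform_moments+decorrelation}, with $\nu_\fr\to\fr'(1)$ as $\beta\to0$.

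For the representation, apply Clark--Ocone to $\fr(u(t,x))$ using \eqref{eq:malliavin_der_g} and the $\F_s$-measurability of $u(s,y)$:
\begin{equ}
\fr(u(t,x))-\E[\fr(u(t,x))] = \beta\int_0^t\!\!\int_{\R^d} u(s,y)\,\E\big[\fr'(u(t,x))\,w_{s,y}(t,x)\,\big|\,\F_s\big]\,\xi(ds,dy)\,.
\end{equ}
Evaluating at $(t/\eps^2,x/\eps)$, substituting $s=\sigma/\eps^2$, $z=\eps y$ and using $P_{t/\eps^2}(x/\eps)=\eps^dP_t(x)$ together with the covariance scaling \eqref{eq:xi_eps_cov} reduces $u_\eps^\fr(t,x)$ to $\beta$ times $\int_0^t\!\int u_\eps(\sigma,z)\,g_\eps(\sigma,z)\,\xi^\eps(d\sigma,dz)$, where $g_\eps(\sigma,z)=\eps^{-d}\E[\fr'(u(t/\eps^2,x/\eps))\,w_{\sigma/\eps^2,\eps^{-1}z}(t/\eps^2,x/\eps)\mid\F_{\sigma/\eps^2}]$, and all prefactors cancel.

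The heart of the proof is to show $g_\eps(\sigma,z)=\nu_\fr\,P_{t-\sigma}(x-z)+(\text{error})$ in $L^p(\Omega)$, with an error carrying quantitative decay in $t/\eps^2-\sigma/\eps^2$. Inserting the identity $w_{s,y}(t,x)=P_{t-s}(x-y)\big(C_{s,y}(t)\vec Z(t,x)+\rhosy(t,x)\big)$ from \eqref{e:defn_rho}, the $\rhosy$ term contributes at most $P_{t-s}(x-y)\,M\,\beta\,m(t-s,x-y)$ by Hölder and Proposition~\ref{prop:rho_homogenisation}. In the main term, replace both $u(t,x)$ inside $\fr'$ and $\vec Z(t,x)$ by $u^{(s)}(t,x)$ (costs of order $(t/\eps^2)^{\f{2-\kappa}4}$ and $(t-s)^{\f{2-\kappa}4}$, by \eqref{eq:Lp_rate_transform} applied to $\fr'$ and by Theorem~\ref{thm:stat_field}); since $u^{(s)}(t,x)$ and $C_{s,y}(t)$ are independent of $\F_s$, the conditional expectation collapses to the deterministic number $\E[\fr'(u^{(s)}(t,x))\,u^{(s)}(t,x)\,C_{s,y}(t)]$. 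Splitting the noise at the midpoint time $\tfrac{s+t}2$, replace $u^{(s)}(t,x)$ by $u^{((s+t)/2)}(t,x)$ and $C_{s,y}(t)$ by $C_{s,y}(\tfrac{s+t}2)$ (both at cost $\lesssim\beta(t-s)^{\f{2-\kappa}4}$, by Theorem~\ref{thm:stat_field} and \eqref{e:estimate_Csy}); the first depends only on the noise on $(\tfrac{s+t}2,t]$, the second only on the noise on $(s,\tfrac{s+t}2]$, hence they are independent and the expectation factorises as $\E[\fr'(u^{((s+t)/2)}(t,x))\,u^{((s+t)/2)}(t,x)]\cdot\E[C_{s,y}(\tfrac{s+t}2)]$. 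Here $\E[C_{s,y}(\cdot)]=1$ since $C_{s,y}(\cdot)$ is a martingale started from $C_{s,y}(s)=\int\delta_y=1$, while the first factor tends to $\E[\fr'(\vec Z)\vec Z]=\nu_\fr$ with error $\lesssim(t-s)^{\f{2-\kappa}4}$. Finally the remaining factor $u_\eps(\sigma,z)$ is replaced by $1$: the difference is a stochastic integral whose squared $L^2(\Omega)$-norm is, by Itô isometry, controlled by $\cov(u_\eps(\sigma,z_1),u_\eps(\sigma,z_2))\lesssim 1\wedge(\eps^{\kappa-2}|z_1-z_2|^{2-\kappa})$ (from \eqref{eq:g_cov_space_estimate} with $\fr=\mathrm{id}$), which vanishes pointwise as $\eps\to0$ while the weight $P_{t-\sigma}(x-z_1)P_{t-\sigma}(x-z_2)\eps^{-\kappa}R((z_1-z_2)/\eps)$ admits a uniformly integrable dominating function, so dominated convergence applies. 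Each error carries a decaying factor ($m(t-s,x-y)$ with exponent $\mu=\f12-\f1\kappa>0$, a power $(t-s)^{\f{2-\kappa}4}$ with $\f{2-\kappa}4<0$, or $(t/\eps^2)^{\f{2-\kappa}4}$); integrating these against the heat-kernel covariance weight over $[0,t/\eps^2]$ — a quantity bounded uniformly in the time horizon precisely because $\kappa>2$ — and rescaling shows that they sum to something tending to $0$ as $\eps\to0$. The short region $\sigma\in(t-O(\eps^2),t)$, where homogenisation is ineffective, has rescaled time-measure $O(\eps^2)$ and contributes negligibly.

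It remains to upgrade these pointwise-in-$(t,x)$ bounds to $L^p([0,T],\C^\alpha(E))$. For this one establishes, for the error $u_\eps^\fr-\nu_\fr\cU^{0,\eps}$, moment estimates on its space-time increments that are uniform in $\eps$ and vanish as $\eps\to0$; via the Kolmogorov/Besov criterion (here the exponents $\alpha<1-\f\kappa2$ in space and $\sigma<-1-\f\kappa2$ in time enter as the natural regularities of the relevant fields) this yields convergence of the error to $0$ in $L^p(\Omega;L^p([0,T],\C^\alpha(E)))$; a uniform-in-$t$ bound on $\|u_\eps^\fr(t,\cdot)\|_{\C^\alpha(E)}$ handles the small-$t$ part of the $L^p_t$-norm (and $\cU^0(0,\cdot)=0=u_\eps^\fr(0,\cdot)$ is consistent). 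Combining this with $\cU^{0,\eps}\to\cU^0$ in probability gives $u_\eps^\fr\to\nu_\fr\cU^0$ in probability in $L^p([0,T],\C^\alpha(E))$. I expect the main obstacle to be the middle step: extracting the effective variance $\nu_\fr$ from $\E[\fr'(u(t,x))w_{s,y}(t,x)\mid\F_s]$ with errors that are genuinely summable against the heat kernel over the diverging time horizon — this is exactly where the homogenisation rate $(1+t-s)^{-\mu}$ of Theorem~\ref{theo:homogenisation}, the spatial decorrelation \eqref{eq:g_cov_space_estimate}, the martingale structure of $C_{s,y}$, and the hypothesis $\kappa>2$ all come together.
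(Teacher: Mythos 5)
Your proposal is correct and follows essentially the same route as the paper: Clark--Ocone representation, insertion of the homogenisation decomposition $w_{s,y}=P_{t-s}(C_{s,y}\vec Z+\rhosy)$, midpoint-splitting to factorise the conditional expectation into $\E[C_{s,y}]\cdot\E[\gfr(u^{((s+t)/2)})]=\nu_\fr+O((t-s)^{(2-\kappa)/4})$, replacement of the remaining $u(s,y)$ by $1$, and a Kolmogorov-type upgrade to $L^p([0,T],\C^\alpha(E))$ followed by $\cU^\eps\to\cU^0$ under a good coupling. The only substantive variation is that you replace $u(s,y)$ by $1$ via the spatial decorrelation bound and Itô isometry rather than via the paper's second Duhamel iteration, and you place the short-time cutoff at microscopic scale $O(1)$ rather than $\eps^{-1}$; both variants deliver the same rates after rescaling.
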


The proof of this theorem  will be given in Section~\ref{sec:proofMain} below.
Given a test function $g:\R^d\to \R$ and $\fr: \R_+ \to \R$, in the following we write
\begin{equ} 
	Y_t^{\epsilon, g} =\epsilon^{1-\frac \kappa 2}\int_{\R^d} \bigl(\fr(u_\epsilon(t,x))-\E[\fr (u_\epsilon(t,x))]\bigr)g(x) dx.
\end{equ}
In order to prove Theorem~\ref{thm:gen_transf-convergence}, we first derive a divergence representation 
for $Y_t^{\epsilon, g}$ by using the Clark--Ocone formula. 
%Through the homogenisation result, this allows us to approximate $Y_t^{\epsilon, g}$ by the fluctuations of the linear SHE in Proposition~\ref{prop:fluctuations_approx}. 

\begin{lemma} \label{Clark-Ocone}
	Let $\fr \in \C^2(\R_+)$ satisfy  \eqref{eq:ass_transf},  then for $\beta$ sufficiently small, 
	\begin{equ}
		Y_t^{\epsilon, g} =	\epsilon^{1-\frac \kappa 2} \int_{0}^{\frac t {\epsilon^2}}\int_{ \R^d} \int_{ \R^d} \E[  \fr'(u_\epsilon(t,x)) D_{s,y} u_\epsilon(t,x) | \F_s]\; g(x) dx \,\xi(ds,dy).\label{eq:clark-ocone-Y}
	\end{equ}
\end{lemma}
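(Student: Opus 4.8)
The plan is to recognise \eqref{eq:clark-ocone-Y} as the Clark--Ocone representation of the random variable $F_\eps \define \int_{\R^d} \fr(u_\eps(t,x))\, g(x)\, dx$, so that $Y_t^{\eps,g} = \eps^{1-\f\kappa2}(F_\eps - \E F_\eps)$. Recall that $u_\eps(t,x) = u(t/\eps^2,x/\eps)$, so $u_\eps$ is a functional of the \emph{unscaled} noise $\xi$; accordingly, all Malliavin derivatives and all conditioning below are taken with respect to $\xi$ and its filtration $\F_s$.

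First I would establish that $F_\eps \in \bD^{1,2}$ and compute its Malliavin derivative. Since $\fr$ satisfies \eqref{eq:ass_transf}, which is precisely the growth condition \eqref{eq:control_growth_Psi}, Lemma~\ref{lem:transform_moments+decorrelation} applies with $\gfr = \fr$: for $\beta$ below the corresponding threshold, $\fr(u_\eps(t,x)) \in \bD^{1,2}$ with $D_{s,y}\fr(u_\eps(t,x)) = \fr'(u_\eps(t,x))\, D_{s,y}u_\eps(t,x)$, and moreover $\|\fr(u_\eps(t,x))\|_{\bD^{1,2}} \lesssim 1$ and $\|D_{s,y}\fr(u_\eps(t,x))\|_2 \lesssim P_{t/\eps^2-s}(x/\eps-y)$, uniformly in $s,x,y$. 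Because $g$ is bounded with compact support, $x\mapsto \fr(u_\eps(t,x))g(x)$ is a bounded $\bD^{1,2}$-valued map on a compact set, hence Bochner integrable; since $D$ is closed it commutes with the spatial integral, giving $F_\eps \in \bD^{1,2}$ with
\[
D_{s,y} F_\eps = \int_{\R^d} \fr'(u_\eps(t,x))\, D_{s,y}u_\eps(t,x)\, g(x)\, dx\;.
\]
(Alternatively, this identity follows by approximating $F_\eps$ with Riemann sums in the $x$-variable and invoking closability of $D$ together with dominated convergence in $L^2(\Omega,\cH)$, the domination being supplied by the above bounds and the covariance estimate \eqref{eq:control_space}.)

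Next I would invoke the Clark--Ocone formula (see, e.g., \cite[Prop.~1.3.14]{nualart2006malliavin}): for $F\in\bD^{1,2}$ one has $F = \E F + \int \E[D_{s,y}F\mid\F_s]\,\xi(ds,dy)$, the stochastic integral being the It\^o integral of the adapted projection of $DF$ (which, since it is $\F_s$-adapted and square integrable, indeed lies in $\dom\,\delta$). Applied to $F=F_\eps$, and noting that by \eqref{e:MallDeru} one has $D_{s,y}u_\eps(t,x) = \beta u(s,y)w_{s,y}(t/\eps^2,x/\eps)\1_{[0,t/\eps^2]}(s)$, so that $D_{s,y}F_\eps$ vanishes for $s>t/\eps^2$, this yields
\[
F_\eps - \E F_\eps = \int_0^{t/\eps^2}\!\!\int_{\R^d} \E\Bigl[\int_{\R^d}\fr'(u_\eps(t,x))D_{s,y}u_\eps(t,x)\,g(x)\,dx \;\Big|\;\F_s\Bigr]\, \xi(ds,dy)\;.
\]
Pulling the conditional expectation inside the $x$-integral (a conditional Fubini step, legitimate since $\int_{\R^d}\|\fr'(u_\eps(t,x))D_{s,y}u_\eps(t,x)\|_1\,|g(x)|\,dx<\infty$ by the bounds above) and multiplying through by $\eps^{1-\f\kappa2}$ gives exactly \eqref{eq:clark-ocone-Y}.

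The step I expect to be the main (though purely technical) obstacle is justifying that the Malliavin derivative, and afterwards the conditional expectation, may be interchanged with the integral $\int_{\R^d}(\,\cdot\,)g(x)\,dx$. This rests on the uniform-in-$x$ moment and heat-kernel bounds of Lemma~\ref{lem:transform_moments+decorrelation}, combined with \eqref{eq:control_space} to control $\|DF_\eps\|_{L^2(\Omega,\cH)}$, together with either Bochner-integrability of the $\bD^{1,2}$-valued integrand or closability of $D$ on Riemann-sum approximations. Once this bookkeeping is in place, the conclusion is an immediate application of the Clark--Ocone formula.
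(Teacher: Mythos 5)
Your proposal is correct and takes essentially the same route as the paper's own (two-line) proof: apply Lemma~\ref{lem:transform_moments+decorrelation} to get Malliavin differentiability of $\fr(u_\eps(t,x))$ together with the chain rule, commute $D$ and the conditional expectation with the spatial integral against $g$, and invoke the Clark--Ocone formula, with the time restriction to $[0,t/\eps^2]$ coming from the support of $D_{s,y}u_\eps(t,x)$. The only difference is that you spell out the interchange-of-integrals bookkeeping that the paper leaves implicit.
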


\begin{proof}
	By Lemma~\ref{lem:transform_moments+decorrelation} and Clark--Ocone formula
	\begin{equs}
		Y_t^{\epsilon, g} &= \int_{\R \times \R^d} \E [ D_{s,y} Y_t^{\epsilon, g} | \F_s] \, \xi(ds,dy)\\
		& = 
		\epsilon^{1-\frac \kappa 2} \int_{0}^{\frac t {\epsilon^2}}\int_{ \R^d} \int_{ \R^d}  \E [ D_{s,y}  \fr(u_\epsilon(t,x)) | \F_s] g(x)\, dx \,\xi(ds,dy)\;,
	\end{equs}
and the claim follows from the chain rule.
\end{proof}

The strategy of proof of~\Cref{thm:gen_transf-convergence} is then as follows.
We use the expression for the Malliavin derivative of the
solution combined with the homogenisation result to argue that, for $\tau \gg s$, 
\begin{equs}
	\E [ D_{s,y}  \fr(u(\tau,x)) | \F_s]
	&=  \beta \E [\fr'(u(\tau,x))u(s,y)w_{s,y}(\tau,x) | \F_s]\\
	&\approx \beta \E [\fr'(\vec Z(\tau,x))\cev Z(s,y) u(s,y)P_{\tau-s}(x-y)\vec Z(\tau,x) | \F_s]\;.
\end{equs}
At that point, we note that the process $ \vec Z$ is expected to mix relatively well and that
furthermore for
$|\tau-s| \gg 1$, $\cev Z(s,y)$ is $\F_s$-independent, of expectation $1$, 
and roughly independent of $\vec Z(\tau,x)$ so that one gets
\begin{equs}[e:MalliavinDer]
	\E [ D_{s,y}  \fr(u(\tau,x)) | \F_s] &\approx \beta u(s,y)P_{\tau-s}(x-y) \E [\fr'(\vec Z(\tau,x))\vec Z(\tau,x)] \\
	&\eqdef \beta u(s,y)P_{\tau-s}(x-y)\nu_\fr \;.
\end{equs}
The approximations are proved in the next section. %Propositions~\ref{prop:fluctuations_approx}-\ref{prop:approx_X_Ueps} below. 
 With these in place, in Section~\ref{sec:proof_General_Thm} we can show $ Y_t^{\epsilon, g}$ converges strongly towards $\cU^0$ and conclude the proof of Theorem~\ref{thm:gen_transf-convergence}. In the final section, we consider the KPZ equation with non-flat initial conditions and prove our main result Theorem~\ref{thm:main-theorem-KPZ}.

%At this stage we have reduced ourselves to the case of the previous paper and we're done.
%\textcolor{blue} {Talking to Yu Gu and Alex Dunlap, one has
%	\begin{equs}
%		Y_t^{1, g} &\approx 
%		\nu_\Phi \int_{0}^{t}\int_{ \R^d} \int_{ \R^d} u(s,y)P_{t-s}(x-y) g(x) dx \,\xi(ds,dy) \\
%		&=\nu_\Phi \int_{0}^{t}\int_{ \R^d} \int_{ \R^d} P_{t-s}(x-y) g(x) dx \,\xi(ds,dy) \\
%		&+ \beta \nu_\Phi \int_{0}^{t}\int_{0}^{s}\int_{ \R^d} \int_{ \R^d} P_{t-s}(x-y)P_{s-r}(y-z) u(r,z) g(x) dx \,\xi(dr,dz)\xi(ds,dy)
%	\end{equs}
%	The first term is exactly the thing we want, 
%	%\luca{With $\epsilon \neq 1$, would we need to use scaling invariance of $\xi$ to show first term limit with Riesz kernel as $\epsilon \to 0$?} 
%	so the suspicion is that the second term vanishes in the
%	limit. The $L^2$-norm of this term can be estimated as some Feynman diagram thing, so one can just check...}
\subsection{Main convergence result}

The main result of this section is Proposition~\ref{prop:fluctuations_approx} below, 
which shows that we can approximate $Y_t^{\epsilon, g}$ by $\nu_{\fr}$ times the large-scale fluctuations of the linear SHE which,
by the mild formulation, are given by
\begin{equ} \label{eq:X_linear_SHE}
X_t^{\epsilon, g} =	\beta  \epsilon^{1 - \f \kappa 2} \int_0^{\f t {\epsilon^2}} \int_{ \R^{d}} \int_{ \R^{d}} P_{t / \epsilon^2 -s}(x -y)  u(s,y) \, \eps^d g(\eps x) \, dx \, \xi (ds, dy)\;.
\end{equ}
Then, in Proposition~\ref{prop:approx_X_Ueps}, we show that as $\eps \to 0$ these are close to the solution of the stochastic heat equation \eqref{eq:EW_eqn} driven by noise $\xi^\epsilon$, which can be written as
\begin{equ} \label{eq:Ueps}
\cU_t^{\epsilon}(g) :=   \beta \epsilon^{1 - \f \kappa 2} \int_{0}^{\f t {\epsilon^2}}\int_{ \R^d} \int_{ \R^d} P_{t /{\epsilon^2}-s}(x -y) \,\eps^d g(\eps x)\, dx \,\xi(ds,dy)\;.
\end{equ}
Since $\cU^{\epsilon}$ converges to the limit $ \cU^0$, we conclude that $Y_t^{\epsilon, g}$ converges to $ \nu_{\fr} \cU^0$ as in Theorem~\ref{thm:gen_transf-convergence}. We first write
	\begin{equ} [eq:defn_J_Gamma0]
		J_{\kappa, \delta} (f) \eqdef \int_{\R^{4d}} \bigg( \prod_{i=1}^2 | f(x_i)| |x_i - y_i|^{\delta-d} \bigg) |y_1 - y_2|^{-\kappa} \, dy\, dx \;,
	\end{equ}
and we recall the following \cite[Lem.~4.10]{GHL23_arxiv_preprint}.

\begin{lemma} 	\label{lem:J_kappa_delta_bound}
	For any compactly supported bounded function $f: \R^d\to \R$ and any $\delta \in (0,\f \kappa 2)$, 
	one has $J_{\kappa, \delta} (f)  \lesssim \, M^{2d+2\delta-\kappa} \| f\|_\infty ^2$, where  $M$ denotes the diameter of the support of $f$.
	Furthermore, setting $f_x^\lambda (y)=  \lambda^{-d} f(\frac {y -x} \lambda)$,  one has the scaling relation
	\begin{equ}[eq:scaling_J]
		J_{\kappa, \delta}( f_x^\lambda) = \lambda^{2\delta - \kappa}J_{\kappa,  \delta}(f)\;.
	\end{equ}
\end{lemma}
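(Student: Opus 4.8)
The plan is to recognise $J_{\kappa,\delta}(f)$ as an iterated Riesz potential paired against a Riesz energy kernel, and then to apply the Hardy--Littlewood--Sobolev (HLS) inequality twice. Writing $I_\delta g(y) = \int_{\R^d} |y-z|^{\delta-d} g(z)\,dz$ for the (unnormalised) Riesz potential of order $\delta$, one first carries out the $x_1,x_2$ integrations in \eqref{eq:defn_J_Gamma0} to rewrite
\begin{equ}
	J_{\kappa,\delta}(f) = \int_{\R^{2d}} \bigl(I_\delta |f|\bigr)(y_1)\,\bigl(I_\delta |f|\bigr)(y_2)\,|y_1-y_2|^{-\kappa}\,dy_1\,dy_2\;.
\end{equ}
Since $f$ is bounded with compact support and $\delta<d$, the potential $I_\delta|f|$ is finite everywhere, and the mapping properties used below justify a posteriori that this integral converges.

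Next I would estimate the energy integral. Since $\kappa\in(2,d)\subset(0,d)$, HLS for the energy kernel gives $\int_{\R^{2d}} g(y_1)g(y_2)|y_1-y_2|^{-\kappa}\,dy \lesssim \|g\|_{L^r(\R^d)}^2$ with $r=\tfrac{2d}{2d-\kappa}\in(1,2)$, the exponent being forced by $\tfrac2r+\tfrac\kappa d=2$. Applied with $g=I_\delta|f|$, this reduces matters to bounding $\|I_\delta|f|\|_{L^r}$, which by HLS for Riesz potentials satisfies $\|I_\delta|f|\|_{L^r}\lesssim \|f\|_{L^q}$ provided $\tfrac1q=\tfrac1r+\tfrac\delta d$ with $1<q<r<\infty$; a short computation gives $q=\tfrac{2d}{2d+2\delta-\kappa}$, and the admissibility requirement $q>1$ is exactly the hypothesis $\delta<\kappa/2$ (while $\delta<\kappa/2<d$ and $\kappa<2d$ dispose of the remaining constraints). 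Since $\supp f$ has diameter at most $M$, hence Lebesgue measure $\lesssim M^d$, one gets $\|f\|_{L^q}\le\|f\|_\infty|\supp f|^{1/q}\lesssim\|f\|_\infty M^{d/q}$, and therefore
\begin{equ}
	J_{\kappa,\delta}(f)\lesssim \|f\|_\infty^2\,M^{2d/q} = \|f\|_\infty^2\,M^{2d+2\delta-\kappa}\;,
\end{equ}
which is the claimed bound.

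The scaling relation \eqref{eq:scaling_J} I would prove directly: substituting $x_i=x+\lambda\tilde x_i$ and $y_i=x+\lambda\tilde y_i$ in $J_{\kappa,\delta}(f_x^\lambda)$ turns each $|f_x^\lambda(x_i)|$ into $\lambda^{-d}|f(\tilde x_i)|$, each $|x_i-y_i|^{\delta-d}$ into $\lambda^{\delta-d}|\tilde x_i-\tilde y_i|^{\delta-d}$, the factor $|y_1-y_2|^{-\kappa}$ into $\lambda^{-\kappa}|\tilde y_1-\tilde y_2|^{-\kappa}$, and contributes $\lambda^{4d}$ from the four $d$-dimensional volume elements; collecting powers of $\lambda$ yields $-2d+2(\delta-d)-\kappa+4d=2\delta-\kappa$, so $J_{\kappa,\delta}(f_x^\lambda)=\lambda^{2\delta-\kappa}J_{\kappa,\delta}(f)$ (independently of $x$). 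This also gives a consistency check on the exponent $2d+2\delta-\kappa$ in the first part, since $f_x^\lambda$ has $\sup$-norm $\lambda^{-d}\|f\|_\infty$ and support diameter $\lambda M$. There is no serious obstacle in this argument; the only points needing care are matching the HLS admissibility conditions exactly to the stated parameter ranges and confirming that all integrals involved are finite. An entirely elementary alternative via dyadic decomposition of the integration domains is also available, but the HLS route is cleaner.
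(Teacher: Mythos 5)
Your proof is correct. Note that the paper itself does not prove this lemma here --- it is imported verbatim by citation from \cite[Lem.~4.10]{GHL23_arxiv_preprint} --- so there is no in-paper argument to compare against; judged on its own terms, your HLS route is sound and self-contained. The Tonelli rewrite as an energy integral of $I_\delta|f|$ is legitimate (the integrand is nonnegative), the exponent bookkeeping checks out ($r=\tfrac{2d}{2d-\kappa}\in(1,2)$ from $\tfrac2r+\tfrac\kappa d=2$, then $q=\tfrac{2d}{2d+2\delta-\kappa}$ with $q>1$ exactly when $\delta<\kappa/2$ and $q<r$ from $\delta>0$), and $\|f\|_{L^q}\lesssim\|f\|_\infty M^{d/q}$ with $2d/q=2d+2\delta-\kappa$ closes the first claim. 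The scaling computation $-2d+2(\delta-d)-\kappa+4d=2\delta-\kappa$ is also right, and your consistency check between the two parts is a nice touch.

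For comparison, the most common alternative (and the likely route in the cited companion paper) is to integrate out $y_1,y_2$ first using the Riesz composition formula $\int_{\R^d}|a-y|^{\alpha-d}|y-b|^{\beta-d}\,dy=c\,|a-b|^{\alpha+\beta-d}$ twice, which collapses $J_{\kappa,\delta}(f)$ to $c\int_{\R^{2d}}|f(x_1)||f(x_2)|\,|x_1-x_2|^{2\delta-\kappa}\,dx_1\,dx_2$; the admissibility conditions for the two compositions are $\delta<\kappa$ and then $2\delta<\kappa$, reproducing your hypothesis, and the remaining integral over $\supp f\times\supp f$ gives $M^{d}\cdot M^{d+2\delta-\kappa}\|f\|_\infty^2$ directly (using $\kappa-2\delta<\kappa<d$ for local integrability). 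That route has the minor advantage of exhibiting the exact kernel $|x_1-x_2|^{2\delta-\kappa}$, which is also how the quantity is used elsewhere in the paper; your HLS argument buys nothing less for the stated bound and avoids invoking the composition identity. Either way, the lemma holds.
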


\begin{proposition}\label{prop:fluctuations_approx}
	Let $p \geq 1$. For $\beta$ sufficiently small, for any test function $g \in \Cc(\R^d)$ and $t > 0$, the following holds for any $\delta_0 \in (0,1)$: 
	\begin{equ} [eq:Y_X_Lp_conv]
		\| Y_t^{\epsilon, g} -  
		\nu_{\fr} 
		X_t^{\epsilon, g} \|_p   \lesssim \epsilon^{ \f 1 2\min( 1-\delta_0, 1 - \f 2 \kappa)} \;,
			\end{equ}
	with precise rate given by \eqref{eq:approx_Lp_SHE}. 
\end{proposition}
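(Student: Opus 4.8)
The plan is to start from the Clark--Ocone representation of Lemma~\ref{Clark-Ocone} together with the Malliavin derivative formula \eqref{e:MallDeru}. After the substitution $x\mapsto\eps x$ (turning the macroscopic spatial variable into a microscopic one) and writing $T=t/\eps^2$, one obtains
\begin{equ}
Y_t^{\eps,g}-\nu_{\fr}X_t^{\eps,g}=\eps^{1-\frac\kappa2}\beta\int_0^T\int_{\R^d}u(s,y)\,G^\eps_{s,y}\,\xi(ds,dy)\;,
\end{equ}
where
\begin{equ}
G^\eps_{s,y}=\int_{\R^d}\Bigl(\E[\fr'(u(T,x))w_{s,y}(T,x)\mid\F_s]-\nu_{\fr}P_{T-s}(x-y)\Bigr)\eps^dg(\eps x)\,dx\;,
\end{equ}
and the integrand $u(s,y)G^\eps_{s,y}$ is $\F_s$-adapted. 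Applying the BDG-type bound \eqref{eq:BDGineq+}, then H\"older's inequality and the uniform moment bounds of Lemma~\ref{lem:moments}, reduces everything to controlling
\begin{equ}
\eps^{2-\kappa}\beta^2\int_0^T\int_{\R^{2d}}\norm{G^\eps_{s,y_1}}_{2p}\norm{G^\eps_{s,y_2}}_{2p}R(y_1-y_2)\,dy\,ds\;,
\end{equ}
hence to a pointwise bound on $Q(s,x,y)\eqdef\norm{\E[\fr'(u(T,x))w_{s,y}(T,x)\mid\F_s]-\nu_{\fr}P_{T-s}(x-y)}_{2p}$, after which $\norm{G^\eps_{s,y}}_{2p}\le\eps^d\int|g(\eps x)|Q(s,x,y)\,dx$. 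I then split $[0,T]$ into the near-terminal regime $T-s\le\eps^{-2\delta_0}$ and the bulk regime $T-s>\eps^{-2\delta_0}$.

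In the bulk regime the key point is that $w_{s,y}(T,\cdot)$ and the restarted solution $u^{(s)}(T,\cdot)$ depend only on the noise restricted to $(s,T]$, hence are independent of $\F_s$. Replacing $\fr'(u(T,x))=\fr'(u^{(0)}(T,x))$ by $\fr'(u^{(s)}(T,x))$ — at a cost $\lesssim(T-s)^{\frac{2-\kappa}4}P_{T-s}(x-y)$ by \eqref{eq:Lp_rate_transform} applied with $\gfr=\fr'$, together with H\"older and \eqref{e:bound_w} — collapses the conditional expectation to an ordinary one. Using the homogenisation decomposition $w_{s,y}(T,x)=P_{T-s}(x-y)\bigl(C_{s,y}(T)\vec Z(T,x)+\rhosy(T,x)\bigr)$, the $\rhosy$-contribution is $\lesssim\beta\,m(T-s,x-y)\,P_{T-s}(x-y)$ by Proposition~\ref{prop:rho_homogenisation} (with $m(r,z)=(1+r)^{-\mu}(1+(1+r)^{-1/2}|z|)$), while in the main term one replaces $\vec Z(T,x)$ by $u^{(s)}(T,x)$ (cost $\lesssim(T-s)^{\frac{2-\kappa}4}$ by Theorem~\ref{thm:stat_field} and H\"older), reducing matters to bounding $|\E[\fr'(u^{(s)}(T,x))u^{(s)}(T,x)C_{s,y}(T)]-\nu_{\fr}|$. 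This last decorrelation is obtained by splitting $(s,T]$ at its midpoint $s'=(s+T)/2$ and replacing $u^{(s)}(T,x)$ by $u^{(s')}(T,x)$ and $C_{s,y}(T)$ by $C_{s,y}(s')$ (costs $\lesssim(T-s)^{\frac{2-\kappa}4}$, by Theorem~\ref{thm:stat_field}, \eqref{eq:Lp_rate_transform} applied with $\gfr(z)=\fr'(z)z$, and \eqref{e:estimate_Csy}): since $u^{(s')}(T,x)$ is measurable with respect to the noise on $(s',T]$ while $C_{s,y}(s')$ is measurable with respect to the noise on $(s,s']$, these are independent, the expectation factorises, and using $\E C_{s,y}(s')=1$ together with $|\E[\fr'(u^{(s')}(T,x))u^{(s')}(T,x)]-\nu_{\fr}|\lesssim(T-s)^{\frac{2-\kappa}4}$ one gets $\nu_{\fr}$ up to $O((T-s)^{\frac{2-\kappa}4})$. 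Since $(T-s)^{\frac{2-\kappa}4}\lesssim(1+T-s)^{-\mu}$ for $\kappa>2$, all these errors are dominated by a single weight, giving $Q(s,x,y)\lesssim\beta\,P_{T-s}(x-y)\,m(T-s,x-y)$ on the bulk regime.

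In the near-terminal regime one uses only the crude bound $Q(s,x,y)\le\norm{\fr'(u(T,x))w_{s,y}(T,x)}_{2p}+\nu_{\fr}P_{T-s}(x-y)\lesssim P_{T-s}(x-y)$ (conditional expectation contracts $L^{2p}$, H\"older, Lemma~\ref{lem:moments}, \eqref{e:bound_w}). Inserting both bounds into $\norm{G^\eps_{s,y}}_{2p}\le\eps^d\int|g(\eps x)|Q(s,x,y)\,dx$ and using $\int_{\R^d}P_{T-s}(x-y)(1+(1+T-s)^{-1/2}|x-y|)\,dx\lesssim1$ yields $\norm{G^\eps_{s,y}}_{2p}\lesssim\eps^d\norm{g}_\infty$ on the near-terminal regime and $\norm{G^\eps_{s,y}}_{2p}\lesssim\beta\,\eps^d\norm{g}_\infty(1+T-s)^{-\mu}$ on the bulk regime, with $G^\eps_{s,y}$ in both cases supported, up to Gaussian tails in $y$ that are harmless against the polynomial growth of $\int R$, on a ball of radius $O(1/\eps)$ (as $\sqrt{T-s}\le\sqrt t/\eps$). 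Combining with $\int_{|y_1|,|y_2|\lesssim1/\eps}R(y_1-y_2)\,dy\lesssim\eps^{-(2d-\kappa)}$, the near-terminal part contributes $\lesssim\beta^2\eps^{2-2\delta_0}$ to the squared $L^p$-norm and the bulk part $\lesssim\beta^4\eps^2\int_0^T(1+\tau)^{-2\mu}\,d\tau\lesssim\beta^4t^{1-2\mu}\eps^{4\mu}$ (using $2\mu=1-\tfrac2\kappa\in(0,1)$, so that the time integral is dominated by its upper endpoint). Taking square roots gives
\begin{equ}\label{eq:approx_Lp_SHE}
\norm{Y_t^{\eps,g}-\nu_{\fr}X_t^{\eps,g}}_p\lesssim\beta\,\eps^{1-\delta_0}+\beta^2\,t^{\frac12(1-2\mu)}\,\eps^{2\mu}\;,\qquad\mu=\tfrac12-\tfrac1\kappa\;,
\end{equ}
which implies \eqref{eq:Y_X_Lp_conv}; throughout, $\beta$ is taken below a threshold depending on $p$, $q$, $d$ and $R$ so that the invoked moment, homogenisation and transform estimates all apply.

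The step I expect to be the main obstacle is the bulk-regime decorrelation, i.e.\ turning $\E[\,\cdot\mid\F_s]$ into the product $\nu_{\fr}\cdot\E C_{s,y}(s')=\nu_{\fr}$. This requires (i) correctly tracking the measurability of the relevant random variables with respect to the noise on $(s,T]$, $(s,s']$ and $(s',T]$ — which relies on the restart identities of Lemma~\ref{lem:RDS} and the mass identity $C_{s,y}(T)=\cev Z_{s-T}(s,y)$ of Proposition~\ref{prop:C_Z_relation} — and (ii) careful bookkeeping so that every replacement error is absorbed by the single space-time weight $P_{T-s}(x-y)\,m(T-s,x-y)$, in particular that the several $(T-s)^{\frac{2-\kappa}4}$ rates are dominated by $(1+T-s)^{-\mu}$, and that the near-terminal cutoff is chosen (as $\eps^{-2\delta_0}$) so as to balance the contributions of the two regimes.
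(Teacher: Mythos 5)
Your proof is correct and follows essentially the same strategy as the paper's: Clark--Ocone plus the Malliavin derivative formula, a crude bound on a short window near the terminal time, and in the bulk the homogenisation estimate for $w_{s,y}/P_{t-s}$ combined with the midpoint-split decorrelation using $\E [C_{s,y}(s')]=1$ and $\E[\gfr(u^{(s')}(T,x))]\to\nu_\fr$; your factoring out of $u(s,y)$ and early collapse of the conditional expectation via the replacement $u^{(0)}\mapsto u^{(s)}$ are only cosmetic reorganisations of the paper's five-term decomposition. One caveat worth recording: by estimating the spatial integrals crudely through the support of $g(\eps\,\cdot)$ and the bound $\int_{|y_i|\lesssim 1/\eps}R(y_1-y_2)\,dy\lesssim\eps^{-(2d-\kappa)}$, rather than through the functionals $J_{\kappa,\delta}(g)$ with $\delta$ close to $1$, your version of the ``precise rate'' carries constants that scale like $\lambda^{-\kappa/2}$ under $g\mapsto g^\lambda_x$ instead of $\lambda^{\delta-\kappa/2}$; this suffices for \eqref{eq:Y_X_Lp_conv} at fixed $g$ (indeed your exponents $\eps^{1-\delta_0}$ and $\eps^{2\mu}$ are even slightly better than the stated ones), but it would not feed into the Hölder-space argument of Lemma~\ref{lem:unif_Calpha_convergence}, which is what the reference to \eqref{eq:approx_Lp_SHE} in the statement is really for.
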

\begin{proof}
	Recall we denote with $u^{(s)}(t, x)$ the solution of SHE for $t \geq s$ with initial condition $1$ at time $s$. Without loss of generality, assume $\epsilon < t$ and let $\tau =  t/\epsilon^2$. 
	By \Cref{Clark-Ocone} and the expression for the Malliavin derivative \eqref{eq:malliavin_der_g}, 
		\begin{equ}[e:Y_start]
			Y_t^{\epsilon, g} 
				= 	\beta \epsilon^{1-\frac \kappa 2} \int_{0}^{\tau}\int_{ \R^d} \int_{ \R^d} \E[  \fr'(u(\tau,x)) u(s,y) w_{s,y} (\tau,x) | \F_s]\; \eps^d g(\eps x) \,dx \,\xi(ds,dy) \;.
		\end{equ}
		Given the large times asymptotics, the homogenisation of $w_{s,y}(\tau,x)$ and mixing of the stationary process, the proof consists in approximating this fluctuations expression through separation of timescales.
		Namely, given moments control, the contribution coming from $ \tau - s < \epsilon^{-1}$ is negligeable. On the other hand, we 
		let $\gfr(u) = \fr'(u)u$ and for $ \tau - s > \epsilon^{-1}$ we show
		\begin{equation} \nonumber
			\begin{aligned}
				&\E[  \fr'(u(\tau,x)) u(s,y) w_{s,y} (\tau,x) | \F_s] \\
				& \quad \approx \E[  \fr'(\vec Z(\tau,x)  ) u(s,y) w_{s,y} (\tau,x) | \F_s] \\
				& \quad \approx \E[  \fr'(\vec Z(\tau,x)) u(s,y) P_{ \tau - s}(x - y) C_{s, y}( \tau) \vec Z (\tau,x)  | \F_s] \\
				& \quad \approx  \E[   u(s,y) P_{ \tau - s}(x - y) C_{s, y}(\tfrac {s + \tau} 2) \gfr( u^{(\f  {s + \tau} 2)}(\tau,x))  | \F_s] \\
				& \quad =  P_{ \tau - s}(x - y) u(s,y) \E[ \gfr( u^{(\f  {s + \tau} 2)}(\tau,x)) ] \approx P_{ \tau - s}(x - y) u(s,y) \nu_{\fr}\;.
			\end{aligned}
		\end{equation}
		
		Define the indicator functions $\chiU(s) = \1_{[\tau - \epsilon^{-1}, \tau ]}(s)$ and $\chiB(s) = \1_{[0, \tau - \epsilon^{-1}]}(s)$. Then, to show \eqref{eq:Y_X_Lp_conv}, we write 
%		\textcolor{blue}{its integrand}
		%the integrand in \eqref{e:Y_start} 
		the conditional projection in \eqref{e:Y_start} 
		as
		\begin{equs}
			\E[  \fr' (u(\tau,x))& u(s,y) w_{s,y} (\tau,x) | \F_s] - 
			\nu_\fr P_{ \tau - s}(x - y) u(s, y)   =   \sum_{i=1}^5 S_i( \tau,  x , s, y)\;, 
		\end{equs}
		where 	
		\begin{equ}
			S_1 = \Big( \E [ \fr'(u(\tau,x)) u(s,y) w_{s,y} (\tau,x) |\F_s] - \nu_\fr P_{ \tau - s}(x - y) u(s,y) \Big)\,  \chiB(s)
		\end{equ}
denotes the integrand on $[0, \tau - \epsilon^{-1}]$ and, on the remaining interval, we have $4$ terms:
%		\luca{The growth control of $\fr ''$ is used when estimating moments for $S_2$, $S_4$ and $S_5$.}
		\begin{equs}
			%	S_1 &= \Big( \E [ \fr'(u_\epsilon(t,x)) u(s,y) w_{s,y} (t /{\epsilon^2}, x /\epsilon) |\F_s] - \nu_\fr P_{ t/ {\epsilon^2} - s}( x/ \epsilon - y) u(s, y)\Big)\, \chiB(s) \;, \\[0.3em]
			S_2 &= \E \big[ \big( \fr'(u(\tau,x)) - \fr'(\vec Z(\tau,x) \big) u(s,y) w_{s,y} (\tau,x) |\F_s\big]\,  \chiU(s) \;, \\[0.3em]
			S_3 &= \E \big[  \fr'(\vec Z(\tau,x)) u(s,y) \big( w_{s,y} (\tau,x) - P_{ \tau - s}( x- y)  C_{s,y} (\tau) \vec Z(\tau,x) \big) |\F_s\big]\,  \chiU(s)\;, 
			\\[0.3em]
			S_4 &= P_{ \tau - s}( x - y)  u(s,y)  \, \E \big[ C_{s,y} (\tau)  \gfr(\vec Z(\tau,x))  - C_{s, y}(\tfrac {s + \tau} 2) 
			\gfr( u^{(\f  {s + \tau} 2)}(\tau,x))|\F_s \big]\,  \chiU(s)\;,  \\ [0.3em]
			S_5 &=  P_{ \tau - s}( x - y)  u(s,y)  \big( \E[ C_{s, y}(\tfrac {s + \tau} 2) 
			\gfr( u^{(\f  {s + \tau} 2)}(\tau,x))|\F_s] - \nu_{\fr} \big)\,  \chiU(s)  \;.
		\end{equs}
		With this decomposition, we can express the difference as follows:
		\begin{equ}
			Y_t^{\epsilon, g} -  \nu_{\fr}  X_t^{\epsilon, g}
			= \beta\epsilon^{1 - \f \kappa 2}  \sum_{k =1}^5 \cI_{k, \epsilon}\;,
		\end{equ}
		where
		\begin{equ}
			\cI_{k, \epsilon} := \int_{0}^{\tau}\int_{ \R^d} \int_{ \R^d} S_k(\tau, x, s, y)\, \eps^d g(\eps x)\, dx \,\xi(ds,dy) \;.
		\end{equ}
		Estimating the $L^p$ norms of $S_j$, we show that each term $\cI_{j, \epsilon}$ in the decomposition converges to $0$. Since, by BDG inequality as in \eqref{eq:BDGineq+}, we have for $k=1,\dots, 5$,
		\begin{equ}[e:Ij_bound]
			\| \cI_{k, \epsilon} \|^2_p \leq  \int_{0}^{\tau}\int_{ \R^{4d}} \Bigl(\prod_{i=1}^2 \| S_k( \tau, x_i, s, y_i)\|_p  |g_\eps(x_i)|\Bigr) \, R(y_1 -y_2) \, ds\,dx\,dy\;,
		\end{equ}
		where $g_\eps(x) = \eps^{d} g(\eps x)$.

	By the moment estimates from Lemmas~\ref{lem:moments}--\ref{lem:transform_moments+decorrelation} (in
	particular \eqref{e:bound_w} to bound $w_{s,y}$), we have $\| S_1( \tau, x, s, y)\|_p\lesssim P_{ \tau - s}( x - y)$
	provided that $\beta$ is sufficiently small.  For $\delta \in (0, d)$, we have (cf. \cite[Lem.~2.1]{GHL23_arxiv_preprint})
		\begin{equ} \label{eq:Gaussian_est}
			P_s(x) \lesssim s^{-\f\delta 2} |x|^{\delta - d}\;.
		\end{equ}
		Inserting this bound into \eqref{e:Ij_bound} yields
		\begin{equ}
		\| \cI_{1, \epsilon} \|_p^2 \lesssim \int_{\tau - \epsilon^{-1}}^{ \tau } \int_{\R^{4d}} \Bigl(\prod_{i = 1}^2 P_{ \tau - s}( x_i - y_i) 
			|g_\eps(x_i)|\Bigr) R(y_1 - y_2) \, ds\,dx\,dy \;.
		\end{equ}
		Recall that $t = \eps^2 \tau$. By the change of variables $x_i \mapsto \eps x_i$, $y_i \mapsto \eps y_i$, $s \mapsto \eps ^2 s$, and the fact that $P_{r/\eps^2}(z/\eps) = \eps^d P_{r}(z)$, we have
%		\begin{equ}
%			x_i \mapsto \eps x_i, \qquad y_i \mapsto \eps y_i, \qquad s \mapsto \eps ^2 s
%		\end{equ}
		\begin{equ}
			\| \cI_{1, \epsilon} \|_p^2 \lesssim  \eps^{\kappa -2}\int_{t- \epsilon}^{ t } \int_{\R^{4d}} \prod_{i = 1}^2 \Bigl(P_{ t - s}( x_i - y_i) 
			 |g(x_i)|  \Bigr) \eps^{-\kappa}R( \tfrac {y_1 - y_2} \eps) \, ds\,dx\,dy \;.
		\end{equ}
		Since $\eps^{-\kappa}R( \tfrac {y_1 - y_2} \eps) \lesssim |y_1 - y_2|^{-\kappa}$, 
		using \eqref{eq:Gaussian_est} with $\delta_0 < 1$, we estimate
		%$\beta < \beta_0(6)$
		\begin{equs} 
			\| \cI_{1, \epsilon} \|_p^2
			& \lesssim \epsilon^{\kappa -2}  \int_{t- \epsilon}^{t } \int_{\R^{4d}}  \Bigl(\prod_{i = 1}^2 P_{ t - s}( x_i- y_i) \, |g(x_i)|\Bigr) |y_1 - y_2|^{-\kappa} \, ds\,dx\,dy \\
			& \lesssim \epsilon^{\kappa -2}  \int_0^{\epsilon} s^{-\delta_0}  \, ds \; \int_{\R^{4d}}\Bigl(\prod_{i=1}^2 |x_i- y_i|^{\delta_0 -d}\, |g(x_i)|\Bigr) |y_1 - y_2|^{-\kappa} \,dx\,dy  \\
			& \lesssim \epsilon^{\kappa -2 } \epsilon^{1 - \delta_0 }   J_{\kappa,\delta_0}(g) \;,
		\end{equs}
		where $J_{\kappa, \delta}$ is defined as in \eqref{eq:defn_J_Gamma0}. 
		
		% S_2  
		For $\cI_{2, \epsilon}$, first apply Cauchy-Schwartz inequality,  then the moment bounds \eqref{e:bound_w} and \eqref{eq:Lp_rate_transform},		we have 
		\begin{equs}
			\| S_2( \tau, x, s, y) \|_p  &\lesssim \|\fr'(u(\tau, x)) - \fr'(\vec Z(\tau, x)\|_{3p} \| w_{s,y} (\tau, x) \|_{3p} \\
			& \lesssim (1 + \tau)^{\f {2 -\kappa}4} P_{ \tau - s}( x - y) \lesssim \epsilon^{\f {\kappa - 2} 4} P_{ \tau - s}( x - y)  \;. 
		\end{equs}
		By a change of variables and \eqref{eq:Gaussian_est} with $\delta_1 < 1$  as above, from  \eqref{e:Ij_bound} we obtain
		\begin{equs}
			\|\cI_{2, \epsilon} \|_p^2 &\lesssim  \epsilon^{\f \kappa 2-1} \, \int_{0}^{\tau - \epsilon ^{-1}}\int_{ \R^{4d}} \Bigl(\prod_{i = 1}^2 P_{ \tau - s}( x_i - y_i) \, |g_\eps(x_i)|\Bigr) R(y_1 - y_2) \, ds\,dx\,dy \\
			& \lesssim   \epsilon^{\kappa-2} \,\epsilon^{\f \kappa 2-1} \, \int_{0}^{t - \epsilon} \int_{\R^{4d}}  \Bigl(\prod_{i = 1}^2 P_{ t - s}( x_i- y_i) \, |g(x_i)|\Bigr) |y_1 - y_2|^{-\kappa} \, ds\,dx\,dy \\
			&\lesssim \epsilon^{\kappa-2} \, \epsilon^{\f \kappa 2-1} \, t^{1-\delta_1}  \,J_{\kappa,\delta_1}(g)\;. \label{eq:est_chain}
		\end{equs}
		
		For the remaining $\cI_{j, \epsilon}$, we shall make use of the fact that $s < \tau - \epsilon^{-1}$. 
		Using the uniform moment bounds on $\gfr(u) = \fr'(u)u$ and on $u$,  and the homogenisation result Proposition~\ref{prop:rho_homogenisation}, we have 
				\begin{equs} 
		\|S_3\|_p &\lesssim  P_{ \tau - s}( x - y) \,  \| \rhosy (\tau, x)\|_{2p} \\
		&\lesssim P_{ \tau - s}( x - y)  \, (1 + \tau -s)^{-\mu} \big(1 + (1 + \tau -s)^{-\f 1 2}|x -y|\big) \\
		&\lesssim %\epsilon^{\mu} P_{ \tau - s}( x - y)  ( 1 + |x -y|  ) \;.
		\epsilon^{\mu} P_{ \tau - s}( x - y)  +    \epsilon^{\mu} |x -y| P_{ \tau - s}( x - y) \;.
	\end{equs}
	Here  $\mu = \f 12 -\f 1 \kappa$ and  $\rho_{s,y}$ is given by (\ref{e:defn_rho}).
	
	As for $\cI_{2, \epsilon}$, the contribution from $\epsilon^{\mu} P_{ \tau - s}( x - y) $ can be bounded by 
	$\epsilon^{\kappa-2} \, \epsilon^{2\mu} \, t^{1-\delta_1}  \,J_{\kappa,\delta_1}(g)$. 
				The contribution from $  \epsilon^{\mu} |x -y| P_{ \tau - s}( x - y)$ can be estimated as follows:
		\begin{equs}
		\epsilon^{2\mu } &\int_{0}^{\tau - \epsilon ^{-1}} 
		\int_{ \R^{4d}} \Big( \prod_{i = 1}^2  |x_i-y_i| P_{ \tau - s}( x_i - y_i) |g_\eps(x_i)|\Big) R(y_1 - y_2) \, ds\,dx\,dy \\
		& \quad \lesssim \epsilon^{2\mu } \epsilon^{\kappa-2} \int_0^{t -\epsilon} (t-s)^{-\delta_2}  	\, ds \;
		\int_{\R^{4d}} \Bigl(\prod_{i = 1}^2 |x_i- y_i|^{1 + \delta_2 -d}\, |g(x_i)|\Bigr) |y_1 - y_2|^{-\kappa} \,dx\,dy  \\
		& \quad \lesssim \epsilon^{2\mu }  \epsilon^{\kappa-2}  \, t^{ 1 - \delta_2 } \,J_{\kappa, 1 + \delta_2 }(g)\;.
		\end{equs}
		We have employed again the heat kernel bound \eqref{eq:Gaussian_est}, with $\delta_2 < \frac \kappa 2-1$. Summing these up, we have:		\begin{equ}
		\|\cI_{3, \epsilon} \|_p^2 \lesssim  \epsilon^{2\mu} \epsilon^{\kappa-2}  \big( t^{1-\delta_1}  J_{\kappa, \delta_1 }(g) + t^{ 1- \delta_2 } J_{\kappa, 1 + \delta_2 }(g) \big)\;. 
		\end{equ}
	
		For $\cI_{4, \epsilon}$ and $\cI_{5, \epsilon}$, we make use of mixing, separating $C_{s,y}$ and $\vec Z$ in independent parts. We first apply
 \eqref{eq:Lp_rate_transform}. For $s < \tau - \epsilon^{-1}$ and $q > 1$ we have
		% estimates capped Z
		\begin{equ}
			\|	\gfr(\vec Z (\tau, x) )- \gfr( u^{(\f  {s + \tau} 2)}(\tau, x))\|_q \lesssim \beta (1 + \tfrac 1 2 ( \tau - s))^{\f {2 -\kappa}4} \lesssim \epsilon^{\f {\kappa-2}4}\;, 
		\end{equ}
		% estimates C difference 
		On the other hand, by \eqref{e:estimate_Csy} we also have 
		\begin{equ}
			\|	  C_{s,y} (\tau) - C_{s,y} (\tfrac  {s + \tau} 2)  \|_q \lesssim \beta (1 + \tfrac 1 2 ( \tau - s))^{\f {2 -\kappa}4} \lesssim \epsilon^{\f {\kappa-2}4}\;.
		\end{equ}
		Therefore,  for sufficiently small $\beta$,  %$\beta \leq \beta_0(4p) \wedge \beta_0$^\gfr (4p)$
		\begin{equ}
			\|	  C_{s,y} (\tau) \gfr( \vec Z (\tau, x)) - C_{s,y} (\tfrac  {s + t/\epsilon^2} 2) \gfr( u^{(\f  {s + \tau} 2)}(\tau, x)) \|_{2p} \lesssim \epsilon^{\f {\kappa-2}4}\;,
		\end{equ}
		We have used trinagle inequality and \Cref{lem:moments}. Consequently,
		\begin{equ}
			\|S_4\|_p \lesssim \epsilon^{\f {\kappa-2}4} P_{ \tau - s}( x - y)\;.
		\end{equ}
		This is the same estimates as for $\cI_{2, \epsilon}$, therefore,
		 $\| \cI_{4, \epsilon}\|_p^2 \lesssim   \epsilon^{\kappa-2} \epsilon^{\f\kappa 2 -1} t^{1-\delta_1} J_{\kappa, \delta_1 }(g) $.
		 		
		%S5
	By the integral expression for $C_{s,y}$ in \eqref{e:Csy_eqn},  $C_{s,y} (\tfrac  {s + \tau} 2)$ is of expectation $1$  and independent of $u^{(\f  {s + \tau} 2)}(\tau, x)$. In addition, both terms  are $\F_s$-independent, therefore we have
		\begin{equs}
			\E[ C_{s,y} (\tfrac  {s + \tau} 2)
			\gfr( u^{(\f  {s + \tau} 2)}(\tau, x))|\F_s] &
		 = \E [ C_{s,y} (\tfrac  {s + \tau} 2)  ] \E [ \gfr( u^{(\f  {s + \tau} 2)}(\tau, x)) ]\\
			&  = \E [ \gfr( u^{(\f  {s + \tau} 2)}(\tau, x))  - \gfr( \vec Z (\tau, x)) ]  + \nu_{\fr}\;. 
		\end{equs}
		Using the bounds in \eqref{eq:Lp_rate_transform}, we obtain %for $\beta \leq \beta_0(p) \wedge $\beta_0$^\gfr(2)$
		\begin{equs}
			\|S_5\|_p &\lesssim  P_{ \tau - s}( x - y) \| \gfr( u^{(\f  {s + \tau} 2)}(\tau, x))  - \gfr( \vec Z (\tau, x)) \|_{2p} \\
			&\lesssim P_{ \tau - s}( x - y)  (1 + \tfrac 1 2 (\tau - s))^{\f {2 -\kappa}4} \lesssim \epsilon^{\f {\kappa-2}4} P_{ \tau - s}( x - y) \;. 
		\end{equs}
		This is the same bound for $S_4$, hence $\| \cI_{5, \epsilon}\|_p^2 \lesssim  \epsilon^{\kappa-2} \epsilon^{\f\kappa 2 -1} t^{1-\delta_1}  J_{\kappa, \delta_1 }(g)$. Putting all estimates together,  observing that $\mu= \f 1 2 - \f 1 \kappa < \frac {\kappa -2 } 4$,  we obtain
		\begin{equ} [eq:approx_Lp_SHE]
			\| Y_t^{\epsilon, g} -  \nu_{\fr} X_t^{\epsilon, g} \|_p \lesssim  \eps^{\frac {1-\delta_0} 2}  J_{\kappa, \delta_0 }(g)^{\f 1 2} + \eps^{\mu} 
			\Big(   
			t^{\f {1 - \delta_1} 2} J_{\kappa, \delta_1 }(g)^{\f 1 2} +  t^{\f {1 - \delta_2} 2}
			J_{\kappa, 1+\delta_2 }(g)^{\f 1 2} \Big)\;,
		\end{equ} 
		where $\delta_0, \delta_1 \in (0, 1)$ and $\delta_2 \in (0, \f \kappa 2 -1)$.
		The implicit constants are uniform in $t, \eps > 0$. We conclude the proof of \eqref{eq:Y_X_Lp_conv}. 
	\end{proof}

\begin{proposition}\label{prop:approx_X_Ueps}
Let $p \geq 1$ and $\beta$ sufficiently small. The following holds
for any  $g \in \Cc(\R^d)$,  $t > 0$, and $\theta \in (2, \kappa \wedge (d + 2 - \kappa))$:
\begin{equ} [eq:X_Ueps_Lp_conv]
\|  X_t^{\epsilon, g}  - \cU_t^{\epsilon}(g)  \|_p \lesssim  \epsilon^{\f {\theta -2} {2p}} t^{\f  {1-\delta} 2 (1 + \f 1 p) }   \, J_{\kappa + \theta - 2\delta, \delta}(g)^{\f 1 {2p}} J_{\kappa, \delta }(g)^{\f 1 2 (1 - \f 1 p)}\;.
\end{equ}
\end{proposition}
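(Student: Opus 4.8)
The plan is to exploit that $X_t^{\epsilon, g}$ in \eqref{eq:X_linear_SHE} and $\cU_t^{\epsilon}(g)$ in \eqref{eq:Ueps} are the \emph{same} stochastic integral against $\xi$, except that $u(s,y)$ replaces $1$, so that, with $\tau=t/\eps^2$, $g_\eps=\eps^d g(\eps\,\cdot)$ and $G_s(y)=\int_{\R^d}P_{\tau-s}(x-y)g_\eps(x)\,dx$,
\begin{equation*}
X_t^{\epsilon, g}-\cU_t^{\epsilon}(g)=\beta\eps^{1-\f\kappa2}\int_0^{\tau}\!\int_{\R^d}G_s(y)\bigl(u(s,y)-1\bigr)\,\xi(ds,dy)\,,
\end{equation*}
an $\F_s$-adapted integral. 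By \eqref{eq:BDGineq+},
\begin{equation*}
\bigl\|X_t^{\epsilon, g}-\cU_t^{\epsilon}(g)\bigr\|_p^2\lesssim\eps^{2-\kappa}\int_0^{\tau}\!\int_{\R^{2d}}|G_s(y_1)G_s(y_2)|\,\bigl\|(u(s,y_1)-1)(u(s,y_2)-1)\bigr\|_{p/2}\,R(y_1-y_2)\,dy\,ds\,.
\end{equation*}
For $p=2$ the product norm becomes the covariance $\cov(u(s,y_1),u(s,y_2))$, which by \eqref{eq:g_cov_space_estimate} applied with $\Phi=\mathrm{id}$ (and spatial stationarity of $u(s,\cdot)$) satisfies $|\cov(u(s,y_1),u(s,y_2))|\lesssim 1\wedge|y_1-y_2|^{2-\kappa}$ uniformly in $s\ge0$, so that the integral \emph{decorrelates}; this extra spatial decay, absent for $\cU_t^{\epsilon}(g)$ itself, is precisely what makes the difference vanish.

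For general $p$ I would interpolate. The $p=2$ estimate, after the manipulations below, bounds $\mathbb E[(X_t^{\epsilon, g}-\cU_t^{\epsilon}(g))^2]$ with an $\eps$-gain and a factor $J_{\kappa+\theta-2\delta,\delta}(g)$; on the other hand, bounding the product norm crudely by $\|u(s,y_1)-1\|_q\,\|u(s,y_2)-1\|_q\lesssim 1$ (valid for every $q$ once $\beta$ is small, by \eqref{eq:moment_u}) gives a bound of the size of $\|\cU_t^{\epsilon}(g)\|_q$, with no $\eps$-gain and a factor $J_{\kappa,\delta}(g)$. Interpolating $\|X_t^{\epsilon, g}-\cU_t^{\epsilon}(g)\|_p$ between these two, with weight $\tfrac1p$ on the refined estimate and $1-\tfrac1p$ on the crude one, produces the mixed form \eqref{eq:X_Ueps_Lp_conv}. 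The delicate point is that the decorrelation lives only at the level of the mean, so to use it inside the Burkholder--Davis--Gundy bound for $p>2$ one needs either to control the higher moments via a chaos/hypercontractivity argument (using that $u-1$ has a $\beta$-summable chaos expansion), or to expand $(u(s,y_1)-1)(u(s,y_2)-1)$ by It\^o's product formula and treat the bracket term — whose $L^{p/2}$ norm is $\lesssim 1\wedge|y_1-y_2|^{2-\kappa}$ by \eqref{eq:control_space} and \eqref{eq:moment_u}, and which therefore does decay — separately from the remaining martingale terms.

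The rest is book-keeping, following the proof of Proposition~\ref{prop:fluctuations_approx}. In both estimates one bounds $|G_s(y)|\le\int P_{\tau-s}(x-y)|g_\eps(x)|\,dx$, applies the heat-kernel bound \eqref{eq:Gaussian_est} with exponent $\delta$ to each of the two kernels, and performs the diffusive rescaling $x_i\mapsto\eps x_i$, $y_i\mapsto\eps y_i$, $s\mapsto\eps^2 s$; this replaces $R(y_1-y_2)$ by $\eps^{-\kappa}R((y_1-y_2)/\eps)\lesssim|y_1-y_2|^{-\kappa}$ and, in the refined estimate, turns the covariance weight $1\wedge|y_1-y_2|^{2-\kappa}$ into $1\wedge\eps^{\kappa-2}|y_1-y_2|^{2-\kappa}$. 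Multiplying the two spatial weights and using $\eps<1$, one trades extra powers of $|y_1-y_2|$ against decay, bounding the product by $\eps^{\kappa+\theta-2\delta}|y_1-y_2|^{-(\kappa+\theta-2\delta)}$ provided $\theta-2\delta\le\kappa-2$; the remaining spatial integral is then $J_{\kappa+\theta-2\delta,\delta}(g_\eps)$, converted into a constant times $J_{\kappa+\theta-2\delta,\delta}(g)$ by the scaling relation \eqref{eq:scaling_J} and Lemma~\ref{lem:J_kappa_delta_bound} (and likewise $J_{\kappa,\delta}(g)$ in the crude estimate). The time integral $\int_0^{\tau}(\tau-s)^{-\delta}\,ds$ together with $\tau=t/\eps^2$ supplies the powers of $t$, and collecting the powers of $\eps$ gives \eqref{eq:X_Ueps_Lp_conv}. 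I expect the main obstacle to be exactly the first step, namely transferring the mean-level decorrelation $|\cov(u(s,y_1),u(s,y_2))|\lesssim 1\wedge|y_1-y_2|^{2-\kappa}$ into a usable bound on the $L^{p/2}$ norm appearing in the Burkholder--Davis--Gundy inequality for $p>2$; the remaining steps are lengthy but of the same routine nature as those in the proof of Proposition~\ref{prop:fluctuations_approx}.
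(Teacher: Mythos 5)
Your proposal is correct and follows essentially the same route as the paper: write $X_t^{\eps,g}-\cU_t^\eps(g)$ as the stochastic integral picking up $u-1$, extract an $\eps$-gain at the $L^2$ level from the fact that $R$ decays faster than $|x|^{-2}$ (the paper substitutes the mild formulation of $u$ to get a double Wiener integral and applies It\^o's isometry, which is the same computation as invoking the covariance bound \eqref{eq:g_cov_space_estimate}), bound $\|X_t^{\eps,g}-\cU_t^\eps(g)\|_q$ crudely by $\|X_t^{\eps,g}\|_q+\|\cU_t^\eps(g)\|_q$ for all $q$, and interpolate with weights $\f1p$ and $1-\f1p$. The ``delicate point'' you flag for $p>2$ is moot: the interpolation $\|B_\eps\|_p\le\|B_\eps\|_2^{1/p}\|B_\eps\|_{2(p-1)}^{1-1/p}$ only ever uses the decorrelation at the $L^2$ level, so neither hypercontractivity nor an It\^o product expansion is needed, and this is exactly how the paper closes the argument (your bookkeeping would yield slightly different, but equally serviceable, exponents in the $J$-functional and in $t$).
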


\begin{proof} 
 Recall that $X^{\epsilon, g}_t $ is defined in \eqref{eq:X_linear_SHE} and $\cU_t^{\epsilon}$ is defined in \eqref{eq:Ueps}.
	 In the former, we substitute $u(s,y)$ by $1+\int_0^s\int_{\R^d}P_{s-r}(y-z)  u(r,z) \xi(dr, dz)$ allowing to recover $ \cU_t^{\epsilon}(g) $ and an additional term: $
	X^{\epsilon, g}_t =    \cU_t^{\epsilon}(g) + B_{\eps}(g)\;$, where
	\begin{equs} \label{eq:side_B_defn}
	B_{\eps} &= \beta^2 \epsilon^{1 - \f \kappa 2}\int_{0}^{\tau}\int_{ \R^d} \int_{0}^{s}\int_{ \R^d} \int_{ \R^d} P_{\tau-s}(x -y) \cdot \\
		&  \hspace{5em} P_{s-r}(y-z) u(r,z) \,\eps^d g(\eps x) dx \,\xi(dr,dz)\,\xi(ds,dy)\;
	\end{equs}
	and $\tau = t / \epsilon^2$.
	We first show that $B_{\eps}$ converges to $0$ in $L^2$. By It\^{o}'s isometry,
	\begin{equs}
		\| B_{\eps} \|_2^2  & = \beta^4 \epsilon^{2 -\kappa }\int_{0}^{\tau}  \int_{0}^{s} \int_{ \R^{6d}}  \Bigl( \prod_{i = 1}^2  P_{\tau-s}(x_i -y_i) P_{s-r}(y_i-z_i) \Bigr)  R(y_1 - y_2)  \\
		& \qquad  \qquad  R(z_1 -z_2)  \E [u(r,z_1)u(r,z_2) ] \, \eps^{2d}g(\eps x_1) g(\eps x_2) \,ds\, dr\, dx\, dy\, dz \;.
	\end{equs}
	%	where $F_\eps(x) = \eps{-2d}g(\eps x_1) g(\eps x_2)$ as above
	Using the uniform $L^2$ bound on $u$ and a change of variables as before\footnote{Namely, 
		$x_i \mapsto \eps x_i$, $y_i \mapsto \eps y_i$, $z_i \mapsto \eps z_i$, $s \mapsto \eps ^2 s$, $r \mapsto \eps ^2 r$, combined with $P_{r/\eps^2}(z/\eps) = \eps^d P_{r}(z)$.}, we obtain %for $\beta \leq $\beta_0(2)$
		\begin{equs}
			\| B_{\eps} \|_2^2  & \lesssim   \int_{0}^{t} \int_{0}^{s}   \int_{\R^{6d}} \Bigl( \prod_{i = 1}^2  P_{t -s}(x_i -y_i) P_{s-r}(y_i-z_i) \Bigr) \, \epsilon^{-2}R( \tfrac {y_1 - y_2} \epsilon) \\
			& \hspace{5em}\epsilon^{-\kappa}R( \tfrac {z_1 - z_2}\epsilon) |g(x_1) g(x_2) | \,ds\, dr\, dx\, dy\, dz \;.
		\end{equs}
		Using $\epsilon^{-\kappa}R(z/\epsilon) \lesssim |z|^{-\kappa} $ and the kernel bound \eqref{eq:Gaussian_est} with $\delta \in (0, 1)$, we obtain
		\begin{equs}
			\| B_{\eps} \|_2^2  
			 \lesssim \int_{0}^{t} \int_{0}^{s} (t-s)^{-\delta} (s-r)^{-\delta}  \,dr\,ds  \, I^\eps_{\delta, \kappa}(g)  \lesssim t^{2 - 2\delta}   \, I^\eps_{\delta, \kappa}(g) \;, 
		\end{equs}
		where, with $G(x) = g(x_1) g(x_2)$, 
		\begin{equ}
			I^\eps_{\delta, \kappa}(g) = \int_{ \R^{6d}}  \Bigl( \prod_{i = 1}^2  |x_i -y_i|^{\delta - d} |y_i-z_i|^{\delta - d} \Bigr) \eps^{-2}R( \tfrac {y_1 - y_2} \epsilon) |z_1 - z_2|^{-\kappa}  |G(x)|\, dx\, dy\, dz\;.
		\end{equ}
		Using the fact that, for $\beta_1 + \beta_2+d<0$ and $\beta_i<-d$,  $\int_{\R^d} |y -z|^{\beta_1} |z-w|^{\beta_2} dz = c_{\beta_1, \beta_2} |y-w|^{\beta_1 + \beta_2 +d}$, we can integrate out the $z_i$ variables and obtain
		\begin{equ}
			I^\eps_{\delta, \kappa}(g)  = c  \int_{ \R^{4d}}  \Bigl( \prod_{i = 1}^2  |x_i -y_i|^{\delta - d} \Bigr) \eps^{-2}R( \tfrac {y_1 - y_2} \epsilon) |y_1 -y_2|^{2\delta - \kappa}  |G(x)|\, dx\, dy\;.
		\end{equ}
		Let us denote
		\begin{equ}
			K_\eps(x) = \eps^{-2}R( x/ \epsilon) |x|^{2\delta - \kappa}\;.
		\end{equ} 
		We know that $R(x) \lesssim |x|^{-\theta}$, for any $\theta \in [0, \kappa]$. Let us pick $\theta \in (2, \kappa \wedge (d + 2 - \kappa))$, so that $\theta + \kappa - d < 2$ and we fix 
		$\delta \in ( \f 1 2 (\theta + \kappa - d ), 1 )$. Then, uniformly in $\eps \in (0, 1)$, 
		\begin{equ}[e:Keps_bound]
			K_\eps(x) \lesssim \eps^{\theta - 2} |x|^{2\delta - \kappa - \theta}\;. 
		\end{equ}
		Note that, by the choice of $\theta$ and $\delta$, $K_\eps(x)$ is integrable at $0$ and decays fast enough at infinity to integrate it over the other kernels. In fact, by \eqref{e:Keps_bound}  we obtain
		\begin{equs}
			I^\eps_{\delta, \kappa}(g) &\lesssim \eps^{\theta - 2}  \int_{ \R^{4d}}  \Bigl( \prod_{i = 1}^2  |x_i -y_i|^{\delta - d} \Bigr) |y_1 -y_2|^{2\delta - \kappa - \theta}  |G(x)|\, dx\, dy\\
			& = \eps^{\theta - 2}  J_{\kappa + \theta - 2\delta, \delta}(g)\;, 
		\end{equs}
		where $J$ is defined as in \eqref{eq:defn_J_Gamma0} and bounded by Lemma~\ref{lem:J_kappa_delta_bound}, as long as $\delta < \frac 1 2 (\kappa + \theta - 2\delta)$. This is the case, given that $\delta< 1$ (thus $\delta < \frac 1 4 (\kappa + \theta)$ as required). Then,

	\begin{equ}
		\| B_{\eps}\|_2 \lesssim \eps^{\f {\theta - 2} 2} t^{1-\delta} J_{\kappa + \theta - 2\delta, \delta}(g)^{\f 1 2}\;.  
	\end{equ} 
	
	On the other hand, estimating the $L^p$ norms of $X^\eps$ and $\cU^\eps$ by BDG and a change of variables as in \eqref{eq:est_chain} in the previous proof, we have  %for $\beta \leq \beta_0(p)$
	\begin{equs}
		\| B_{\eps} \|_p  & \lesssim \| X^{\epsilon, g}_t\|_p  + 	\| \cU^{\epsilon}_t(g)\|_p \\
		& \lesssim   \epsilon^{1 - \f \kappa 2}  \Big( \int_0^{\tau} \int_{ \R^{4d}} \Big(\prod_{i = 1} P_{\tau -s}(x_i -y_i)  \eps^d g(\eps x_i) \Big) R(y_1 - y_2) \, dx \, dy\, ds \Big)^{\f 1 2}\\
		&  \lesssim  \Big( \int^{t}_{0} \int_{\R^{4d}} \Bigl( \prod_{i = 1}^2 P_{ t - s}( x_i- y_i) | g(x_i)|\Bigr) |y_1 - y_2|^{-\kappa} \, ds\,dx\,dy \Big)^{\f 1 2} \\
		& \lesssim  t^{\f {1 - \delta} 2}  J_{\kappa, \delta }(g)^{\f 1 2}\;, 
	\end{equs}
	By interpolation,   for any $\theta \in (2, \kappa \wedge (d + 2 - \kappa))$ and $\delta \in ( \f 1 2 (\theta + \kappa - d ), 1 )$, 
	\begin{equs} [eq:approx_Lp_X_Ueps]
		\|  X^{\epsilon, g}_t - \cU^{\epsilon}_t(g) \|_p &=  	\| B_{\eps} \|_p \leq \| B_{\eps} \|_{2}^{\f 1 p} \,  \| B_{\eps} \|_{2(p-1)}^{1- \f 1 p} \\
		& \lesssim \epsilon^{\f {\theta -2} {2p}} t^{\f  {1-\delta} 2 (1 + \f 1 p) }   \, J_{\kappa + \theta - 2\delta, \delta}(g)^{\f 1 {2p}} J_{\kappa, \delta }(g)^{\f 1 2 (1 - \f 1 p)}\;,
	\end{equs}
	with implicit constants uniform in $t, \eps > 0$, concluding the proof. 
\end{proof}

\subsection{Convergence in Hölder spaces}\label{sec:proof_General_Thm}

Before going into the proof of Theorem~\ref{thm:gen_transf-convergence}, we introduce the spaces of locally H\"{o}lder continuous distributions $\C^\alpha(\R^d)$, of negative exponent, and prove moment estimates in these spaces.   For a function $g \in \C(\R^{d})$, $z \in \R^d$ and $\lambda>0$, we denote
\begin{equ}
	g_x^\lambda(y) :=  \lambda^{-d}g( \lambda^{-1}(y-x))\;,
\end{equ}
where we drop the corresponding index if $\lambda =1$ or $x= 0$. 
We denote with $\D$  the space of smooth functions with compact support, and $\D'$ the dual space
of distributions. 
Given $\alpha < 0$, let $r =  \lceil -\alpha \rceil$ be the smallest positive integer bigger than $-\alpha$ and consider the following subset of $\C^r(\R^d)$:
\begin{equ}
	\B_r := \{ g \in \D: \supp(g)\subset B_1, \|g\|_{\C^r}\le 1 \}\;,  
\end{equ}
where $B_a$ denotes the open ball of radius $a$ centred at $0$. 

We say that a distribution $\zeta$ belongs to $\C^\alpha(\R^d)$ %if $\zeta$ is in the dual of $\C^r_c(\R^d)$ and 
if for every compact set $E \subset \R^d$, 
\begin{equ}\label{eq:seminorm_Calpha_basic} 
	\| \zeta \|_{\alpha; E} \eqdef \sup_{g \in \B_r } \sup_{ \lambda \in (0,1)} \sup_{x \in  E}  \,\, \lambda^{-\alpha}| \langle \zeta, g^\lambda_x \rangle | < \infty\;.
\end{equ}
See  \cite[Defn.~3.7]{hairer_RegStr}.
The space $\C^\alpha$ is metrisable, with \begin{equ}\label{eq:metric_Calpha}
	d_{\alpha} (\zeta, \zeta')\eqdef \sum_{m \geq 1} 2^{-m} \big(1 \, \wedge \, \| \zeta - \zeta' \|_{\alpha; B_m} \big)\;,
\end{equ}
making it a Fréchet space. For $\alpha' > \alpha$, $\C^{\alpha'}$ is compactly embedded in $\C^\alpha$ .

%At times, we may use notation $\| \zeta \|_{\C^\alpha} = d_{\alpha} (\zeta, 0)$.
The spaces of $\C^\alpha$ 
can be characterised in multiple ways, %in terms of wavelets \cite[Sec~3.1-2]{hairer_RegStr}  in terms of a 
here we use the characterisation from \cite{Cara-Zam} in terms of a single test function. We fix
an arbitrary reference test function $\varphi \in \D$ such that $\supp \varphi \subset B_1$
and $\int \varphi = 1$ for the remainder of the article.  We shall use the following notation for its rescaled version:
\begin{equ}
	\varphi^{(n)} = 2^{nd} \varphi (2^n \cdot)\;,\qquad
	\varphi_x^{(n)} = 2^{nd} \varphi \bigl(2^n (\cdot-x)\bigr)\;.
\end{equ}
Although $\varphi$ is fixed, the following shows that control on $\langle\zeta,\varphi^{(k)}_x\rangle$ allows to get a similar bound for all test functions and control \eqref{eq:seminorm_Calpha_basic}. 

\begin{theorem}[{\cite[Theorem 12.4]{Cara-Zam}}]\label{thm:critHolder}
	Let $\zeta \in \D'$ be a distribution and let $\alpha \in (-\infty, 0]$.
	Then, one has $\zeta \in \C^\alpha$ if and only if, for any compact set $E$ and uniformly over $k\in \N$,
	\[
	\sup_{x\in E} |\langle\zeta,\varphi^{(k)}_x\rangle| \lesssim2^{-k \alpha}.
	\]
	Furthermore, the semi-norms of $\zeta$ can be estimated by
	\begin{equ}[e:boundSeminorm]
		\|\zeta\|_{\alpha;E} \leq C \sup_{x \in E_2}\sup_{ k\in \N} 
		\; {2^{k\alpha}|\<\zeta,\varphi^{(k)}_x\>| }
	\end{equ}
	where $C = C(\varphi, \alpha, d)$ is an explicit constant and $E_2=\{z: d(z,E)\le 2\}$.
\end{theorem}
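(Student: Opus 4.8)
The plan is to establish the two implications separately; the ``only if'' direction is immediate and the converse is the substance. For ``only if'', note that $\varphi^{(k)}_x = c^{-1}(c\varphi)^{2^{-k}}_x$ with $c=\|\varphi\|_{\C^r}^{-1}$, and $c\varphi\in\B_r$ while $2^{-k}\in(0,1)$ for $k\in\N$; hence the defining estimate \eqref{eq:seminorm_Calpha_basic} of $\|\zeta\|_{\alpha;E}$ gives $|\langle\zeta,\varphi^{(k)}_x\rangle|\leq c^{-1}2^{-k\alpha}\|\zeta\|_{\alpha;E}$ for all $x\in E$, uniformly in $k$. For the converse and the quantitative bound \eqref{e:boundSeminorm} I would first reduce to dyadic scales: given $g\in\B_r$, $x\in E$ and $\lambda\in(0,1)$, put $k=\lfloor\log_2(1/\lambda)\rfloor\in\N$, so $\lambda 2^k\in(1/2,1]$ and $g^\lambda_x=\psi^{(k)}_x$ with $\psi:=g^{\lambda 2^k}$ supported in $B_1$ and $\|\psi\|_{\C^r}\leq 2^{d+r}$; hence it suffices to bound $|\langle\zeta,\psi^{(k)}_x\rangle|$ uniformly over $k\in\N$, $x\in E$ and $\psi$ ranging in a fixed bounded subset of $\D$ supported in $B_1$.

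The core step is to compare $\langle\zeta,\psi^{(k)}_x\rangle$ with the quantities $\langle\zeta,\varphi^{(j)}_z\rangle$ that the hypothesis controls, by a Littlewood--Paley-type telescoping. Since $\varphi^{(j)}$ is an $L^1$-normalised approximate identity, $\psi^{(k)}_x*\varphi^{(j)}\to\psi^{(k)}_x$ in $\D$ as $j\to\infty$, so for any fixed $j_0\geq 2$,
\begin{equ}
\langle\zeta,\psi^{(k)}_x\rangle=\langle\zeta,\psi^{(k)}_x*\varphi^{(k+j_0)}\rangle+\sum_{m\geq k+j_0}\bigl\langle\zeta,\psi^{(k)}_x*(\varphi^{(m+1)}-\varphi^{(m)})\bigr\rangle\;.
\end{equ}
Writing $(\psi^{(k)}_x*\eta)(\cdot)=\int\psi^{(k)}_x(z)\,\eta(\cdot-z)\,dz$, the low-frequency term equals $\int\psi^{(k)}_x(z)\langle\zeta,\varphi^{(k+j_0)}_z\rangle\,dz$ and is $\lesssim\|\psi\|_{L^1}\sup_z|\langle\zeta,\varphi^{(k+j_0)}_z\rangle|\lesssim 2^{-j_0\alpha}2^{-k\alpha}$ by the hypothesis applied on a fixed enlargement of $E$ (the factor $2^{-j_0\alpha}$ being harmless once $j_0$ is fixed). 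For the high-frequency terms one uses the cancellation $\int(\varphi^{(m+1)}-\varphi^{(m)})=0$ together with a first-order Taylor expansion of the slowly varying function $\psi^{(k)}_x$ (whose derivatives are controlled by powers of $2^k$) to see that $\psi^{(k)}_x*(\varphi^{(m+1)}-\varphi^{(m)})$ is $2^{-(m-k)}$ times a bump of controlled $\C^r$-norm, supported in $B_C(x)$ at scale $2^{-k}$; summing the geometric factors $\sum_{m\geq k+j_0}2^{-(m-k)}=2^{1-j_0}$ then produces a bound $c_d 2^{1-j_0}$ times the same type of quantity being estimated. Fixing $j_0$ so large that $c_d 2^{1-j_0}<1$, this self-referential contribution is absorbed, giving $|\langle\zeta,\psi^{(k)}_x\rangle|\lesssim 2^{-k\alpha}$ with constant depending only on $\varphi$, $\alpha$, $d$ and the support and $\C^r$-norm of $\psi$; combined with the reduction to dyadic scales this yields $\zeta\in\C^\alpha$, and tracking which enlargement of $E$ the supports and centres involved live in (a careful accounting pins it to the $2$-enlargement) gives \eqref{e:boundSeminorm}.

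The main obstacle is precisely this absorption step. To carry it out rigorously one must control the growth of the supports of the bumps produced by the successive convolutions, so that the hypothesis is only ever invoked on a fixed compact (namely $E_2$) rather than on compacts that grow with the number of iterations, and one must justify the limit passage in the iteration for a general distribution $\zeta$. A clean way to tame the supports is to refine the mollification scale along the telescoping (using $\varphi^{(k+n)}$ at the $n$-th step rather than a single scale), which upgrades the geometric gain to a super-geometric one and keeps every support inside a fixed ball, at the cost of having to bookkeep higher-order regularity of the intermediate bumps. It is also here that $\alpha\leq 0$ is genuinely used: for $\alpha>0$ the equivalence as stated fails (a non-vanishing continuous function already violates $|\langle\zeta,\varphi^{(k)}_x\rangle|\lesssim 2^{-k\alpha}$), and in the proof the condition $\alpha<1$ is exactly what makes refining the mollification scale by a bounded amount cost only the bounded factor $2^{-j_0\alpha}$ rather than destroying the estimate.
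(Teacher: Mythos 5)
First, note that the paper does not prove this statement at all: it is imported verbatim from the literature (the bracketed citation \cite[Theorem 12.4]{Cara-Zam} in the theorem header), so there is no internal proof to compare yours against. Your overall architecture --- reduction to dyadic scales, a telescoping multiscale decomposition $\psi^{(k)}_x = \psi^{(k)}_x*\varphi^{(k+j_0)} + \sum_m \psi^{(k)}_x*(\varphi^{(m+1)}-\varphi^{(m)})$, the low-frequency term handled by Fubini against the hypothesis, and the high-frequency terms via the vanishing mean of $\varphi^{(m+1)}-\varphi^{(m)}$ together with the smoothness of $\psi^{(k)}_x$ at its own scale --- is indeed the standard route, and the ``only if'' direction and the dyadic reduction are fine.

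However, the absorption step, which you yourself identify as the main obstacle, does not close as written, and this is a genuine gap rather than a technicality. The gain $2^{-(m-k)}$ in $\psi^{(k)}_x*(\varphi^{(m+1)}-\varphi^{(m)})$ is purchased by spending one derivative of $\psi^{(k)}_x$: for the $j$-th derivative one gets $\|D^j(\psi^{(k)}_x*(\varphi^{(m+1)}-\varphi^{(m)}))\|_\infty \lesssim 2^{-m}\|D^{j+1}\psi^{(k)}_x\|_\infty$, so obtaining the gain on \emph{all} derivatives up to order $r$ requires control of $\psi$ in $\C^{r+1}$, whereas $g\in\B_r$ only controls $\psi$ in $\C^r$. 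Consequently the renormalised bump $2^{(m-k)}\psi^{(k)}_x*(\varphi^{(m+1)}-\varphi^{(m)})$ does \emph{not} lie in a fixed multiple of the class over which your quantity is defined (its top-order derivative is off by a factor $2^{(m-k)}$), so the inequality $A \le C + c_d 2^{1-j_0} A$ is not literally available and cannot be ``absorbed''. Two further points would also need to be supplied even if the class were stable: (i) absorption requires the a priori finiteness of the quantity being absorbed, which for a general $\zeta\in\D'$ must be extracted from the local finite order of distributions and run scale by scale; (ii) your proposed repair (refining the mollification scale along the telescoping) is only gestured at, and it is precisely there that the relation $r=\lceil -\alpha\rceil > -\alpha$ (rather than the condition ``$\alpha<1$'' you invoke at the end) is what makes the relevant geometric series converge. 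As it stands the proof is incomplete at its decisive step; the clean way out is the one taken in the cited reference, where the statement is obtained as a corollary of the reconstruction theorem applied to the constant germ $F_x\equiv\zeta$, which packages exactly this multiscale bookkeeping.
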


This characterisation of $\C^\alpha$ spaces allows for a criterion to estimate norms in H\"{o}lder spaces. We recall the following definition from \cite[Defn.~4.6]{GHL23_arxiv_preprint}.

\begin{definition}\label{dis-class}
	A random distribution $\zeta$ is said to belong to $\C^{\alpha_0}_p$ if there exists $C>0$ such that the following estimates hold  for any  $n\geq 1$:
	\begin{equs}
		\label{eq:lattice+time_control}
		\sup_{x \in  \R^d} 
		\bigl\|\langle \zeta, \varphi^{(n)}_x \rangle\bigr\|_{p} &\leq C 2^{-n\alpha_0} , \\
		\label{eq:spatial_control}
		\sup_{\substack{x, y \in  \R^d\;,\\
				0<|x-y|\leq 2^{-n}}}  
		\bigl\|\langle \zeta, \varphi^{(n)}_x - \varphi^{(n)}_y \rangle \bigr\|_{p} &\leq C 2^{-n(\alpha_0-1)} |x-y|\;.
	\end{equs}
	The smallest such constant $C$ is denoted by $\|\zeta\|_{\C^{\alpha_0}_p}$. 
	We write $\C^{\gamma_0, \alpha_0}_{p, T}$, as a shorthand, for the space $\C^{\gamma_0}([0, T], \C^{\alpha_0}_p)$.
\end{definition}

Given any compact $E \subset \R^d$, we denote by $\C^\alpha (E)$ the space of distributions such that the seminorm in \eqref{eq:seminorm_Calpha_basic}  is finite. Then, by \cite[Prop.~4.8]{GHL23_arxiv_preprint} we have the following:

\begin{proposition}\label{prop:time-holder_distribution_process}
	Let $\alpha_0<0$, $p > d$, and $\gamma_0 > \frac 1 {p}$. 
	Then, for any $\gamma \in (0,\gamma_0  - \frac 1 p)$, $\alpha < \alpha_0  - \frac d p$ and any compact $E \subset \R^d$, 
	one has the continuous embeddings
	\begin{equ}
		\C^{\alpha_0}_p \subset L^p(\Omega,\C^{\alpha}(E))\;,\qquad
		\C^{\gamma_0,\alpha_0}_{p,T} \subset L^p(\Omega,\C^\gamma([0, T],\C^{\alpha}(E)))\;.
	\end{equ}
\end{proposition}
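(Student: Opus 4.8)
The statement is a Kolmogorov--Chentsov type embedding and coincides, in the present setting, with \cite[Prop.~4.8]{GHL23_arxiv_preprint}; we outline the argument. The starting point is the characterisation of Theorem~\ref{thm:critHolder}: for any compact $E$ one has $\|\zeta\|_{\alpha;E} \lesssim \sup_{n\geq 0}\sup_{x\in E_2} 2^{n\alpha}|\langle\zeta,\varphi^{(n)}_x\rangle|$, where $E_2=\{z:d(z,E)\leq 2\}$, so it suffices to bound the $L^p(\Omega)$-norm of the right-hand side by a multiple of $\|\zeta\|_{\C^{\alpha_0}_p}$, and similarly for the time increments in the second embedding.

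For the static statement, fix $n$ and regard $f_n(x)=\langle\zeta,\varphi^{(n)}_x\rangle$ as a random field on $E_2$. By Definition~\ref{dis-class} one has $\|f_n(x)\|_p\leq\|\zeta\|_{\C^{\alpha_0}_p}2^{-n\alpha_0}$ and, for $|x-y|\leq 2^{-n}$, $\|f_n(x)-f_n(y)\|_p\leq\|\zeta\|_{\C^{\alpha_0}_p}2^{-n(\alpha_0-1)}|x-y|$; for general $x,y\in E_2$ the two combine to $\|f_n(x)-f_n(y)\|_p\lesssim\|\zeta\|_{\C^{\alpha_0}_p}\bigl(2^{-n\alpha_0}\wedge 2^{-n(\alpha_0-1)}|x-y|\bigr)$. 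Covering $E_2$ by $\lesssim(2^n\diam E_2)^d$ cubes of side $2^{-n}$ and applying the Kolmogorov continuity theorem on each cube (legitimate since the spatial Hölder exponent is $1$ and $p>d$) yields $\|\sup_{x\in E_2}|f_n(x)|\|_p\lesssim\|\zeta\|_{\C^{\alpha_0}_p}2^{n(d/p-\alpha_0)}$, with implicit constant depending only on $E,d,p$. Hence
\begin{equ}
\E\bigl[\|\zeta\|_{\alpha;E}^p\bigr] \lesssim \sum_{n\geq 0} 2^{n\alpha p}\,\E\Bigl[\sup_{x\in E_2}\bigl|f_n(x)\bigr|^p\Bigr] \lesssim \|\zeta\|_{\C^{\alpha_0}_p}^p \sum_{n\geq 0} 2^{n(\alpha - \alpha_0 + d/p)p}\;,
\end{equ}
and the geometric series converges precisely because $\alpha<\alpha_0-d/p$, giving $\C^{\alpha_0}_p\subset L^p(\Omega,\C^{\alpha}(E))$.

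For the space-time statement we run the same argument jointly in $(t,x)\in[0,T]\times E_2$. The hypothesis $\zeta\in\C^{\gamma_0}([0,T],\C^{\alpha_0}_p)$ supplies, uniformly in $n$, the bounds $\|\langle\zeta_t,\varphi^{(n)}_x\rangle\|_p\lesssim 2^{-n\alpha_0}$ and $\|\langle\zeta_t-\zeta_s,\varphi^{(n)}_x\rangle\|_p\lesssim 2^{-n\alpha_0}|t-s|^{\gamma_0}$, together with the corresponding spatial-increment bounds with and without an extra time increment. Rescaling the spatial variable by $2^{-n}$, so that $g_n(t,x)=\langle\zeta_t,\varphi^{(n)}_{2^{-n}x}\rangle$ has a genuine mixed-Hölder modulus $\|g_n(t,x)-g_n(s,y)\|_p\lesssim 2^{-n\alpha_0}\bigl(|t-s|^{\gamma_0}+|x-y|\bigr)$ on unit cubes, an anisotropic Kolmogorov--Chentsov estimate on $[0,T]\times E_2$ applies provided $\gamma_0 p>1$ and $p>d$ (both guaranteed by $\gamma_0>1/p$ and $p>d$), producing for any $\gamma<\gamma_0-1/p$ a jointly continuous modification together with a bound
\begin{equ}
\Bigl\| \sup_{t\in[0,T]}\sup_{x\in E_2}\bigl|\langle\zeta_t,\varphi^{(n)}_x\rangle\bigr| + \sup_{s\neq t}\frac{\sup_{x\in E_2}\bigl|\langle\zeta_t-\zeta_s,\varphi^{(n)}_x\rangle\bigr|}{|t-s|^\gamma} \Bigr\|_p \lesssim 2^{n(d/p-\alpha_0)}\|\zeta\|_{\C^{\gamma_0,\alpha_0}_{p,T}}\;,
\end{equ}
with implicit constant depending on $E,d,p,T$. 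Applying Theorem~\ref{thm:critHolder} at each fixed $t$ and to the increments $\zeta_t-\zeta_s$, and summing $2^{n\alpha p}$ times the $p$-th power of this bound over $n\geq 0$ — convergent since $\alpha<\alpha_0-d/p$ — yields $\zeta\in L^p(\Omega,\C^\gamma([0,T],\C^\alpha(E)))$ with norm controlled by $\|\zeta\|_{\C^{\gamma_0,\alpha_0}_{p,T}}$.

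The only genuinely delicate point is the anisotropic Kolmogorov--Chentsov step: one must track the exact powers of $2^n$ through the dyadic chaining so that the loss is no worse than $d/p$ derivatives in space and $1/p$ in time, which is exactly what makes the $n$-sum converge under the strict inequalities $\alpha<\alpha_0-d/p$ and $\gamma<\gamma_0-1/p$ (the compact embedding $\C^{\alpha'}\hookrightarrow\C^\alpha$ for $\alpha'>\alpha$ is a convenient slack to absorb). Everything else is a routine combination of Minkowski's inequality, the a priori bounds of Definition~\ref{dis-class}, and standard covering arguments; full details are in \cite[Sec.~4]{GHL23_arxiv_preprint}.
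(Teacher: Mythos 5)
Your argument is correct and is essentially the proof the paper relies on: the paper itself only cites \cite[Prop.~4.8]{GHL23_arxiv_preprint} (with the remark that the argument there carries over from $\R^d$ to compact $E$), and that proof is precisely the Kolmogorov--Chentsov chaining over the dyadic family $\varphi^{(n)}_x$ combined with Theorem~\ref{thm:critHolder} and the summability condition $\alpha<\alpha_0-d/p$ (resp.\ $\gamma<\gamma_0-1/p$) that you reproduce. The one point worth keeping explicit is the decoupling of the time and space losses in the second embedding, which you correctly obtain from the rectangular increment bounds rather than from a naive anisotropic metric-entropy count.
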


%\begin{proposition}\label{prop:time-holder_distribution_process}
%	Let $\alpha_0<0$, $p > d$, and $\gamma_0 > \frac 1 {p}$. Consider $\C^\alpha$ as the metric space equipped with the distance $d_\alpha$ as in \eqref{eq:metric_Calpha}. 
%	Then, for any $\gamma \in (0,\gamma_0  - \frac 1 p)$ and any $\alpha < \alpha_0  - \frac d p$,
%	one has the continuous embeddings
%	\begin{equ}
%		\C^{\alpha_0}_p \subset L^p(\Omega,\C^{\alpha})\;,\qquad
%		\C^{\gamma_0,\alpha_0}_{p,T} \subset L^p(\Omega,\C^\gamma([0, T],\C^{\alpha}))\;.
%	\end{equ}
%\end{proposition}
%
%\begin{remark}\label{rem:compE}
%	Given any compact $E \subset \R^d$, denoting with $\C^\alpha (E)$ the space of distributions such that the seminorm in \eqref{eq:seminorm_Calpha_basic}  is finite, 
%	by the proof of the proposition one actually has the continuous embeddings $\C^{\alpha_0}_p \subset L^p(\Omega,\C^{\alpha}(E))$ and $\C^{\gamma_0,\alpha_0}_{p,T} \subset L^p(\Omega,\C^\gamma([0, T],\C^{\alpha}(E)))$. 
%	Then, all results and main theorems  also hold when considering convergence with respect to the seminorm $\| \cdot  \|_{\alpha, E}$. 
%\end{remark}

\begin{remark}
In \cite[Prop.~4.8]{GHL23_arxiv_preprint}, these inclusions are stated for  $E=\R^d$, but the proof 
given there allows to replace $\R^d$ by $E$. 
 \end{remark}
% the metric space equipped with the distance $d_\alpha$ as in \eqref{eq:metric_Calpha}, 
%the seminorm on compacts. 
%In the proof below we show convergence based  $L^p([T_0, T], \C^{\alpha}(E))$ for any compact $E$, this implies analogous global convergence in 
%$L^p([0, T], \C^{\alpha}(\R^d))$, formulating the spacing appropriately.

We have the following:

\begin{lemma}\label{lem:unif_Calpha_convergence}
	Given any $\alpha_0 <  1-\f \kappa 2 $, $p \geq 1$, for sufficiently small $\beta$, there exist $a = a(\alpha_0, p, \kappa) > 0$, such that the following holds for any $T >0$:
	\begin{equ}[eq:Yeps_cUeps_approx]
		\sup_{0<t\leq T}	\| Y^\epsilon_t-  \nu_{\fr} \cU^\eps_t \|_{\C^{\alpha_0}_p}   \lesssim  \epsilon^a\;.
	\end{equ}
%	where $\delta=\min(\eps^{(\f  {1 - \delta} 2 ) \wedge \mu}, \epsilon^{\f {\theta -2} {2p}})$ \textcolor{red}{$\mu$ is not proper}
	In particular, for every $\alpha<\alpha_0-\f d p$, there exists $a > 0$ such that
	 $\|\sup_{0<t\leq T}   \| Y_t^\epsilon - \nu_{\fr} \cU_t^\eps \|_{\alpha, E}\|_p \lesssim  \epsilon^a$, for any compact $E \subset \R^d$. 
	Furthermore, for any $\gamma_0 \in (0,\frac 1 2)$, $\alpha_0 < 1 - \frac\kappa2 - 2\gamma_0$, and $p \ge 1$, one has
$		\sup_{\eps \in (0, 1]} \| \cU^\eps  \|_{\C^{\gamma_0, \alpha_0}_{p, T}}  < \infty$.
\end{lemma}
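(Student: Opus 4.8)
The plan is to reduce all three assertions to the quantitative $L^p$ bounds of Propositions~\ref{prop:fluctuations_approx} and~\ref{prop:approx_X_Ueps}. Combining \eqref{eq:approx_Lp_SHE} with \eqref{eq:approx_Lp_X_Ueps} and recalling $\cU^\eps_t(g)=\langle\cU^\eps_t,g\rangle$, one obtains, for a fixed admissible choice of the exponents $\delta_i,\theta$, an estimate of the shape $\sup_{0<t\le T}\|\langle Y_t^\eps-\nu_\fr\cU_t^\eps,g\rangle\|_p\lesssim\eps^{a_0}\Theta(g)$ valid for every $g\in\Cc(\R^d)$, where $a_0>0$ and $\Theta(g)$ is a finite combination of powers of the quantities $J_{\kappa_j,\delta_j}(g)$ controlled by Lemma~\ref{lem:J_kappa_delta_bound}. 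To pass from this to \eqref{eq:Yeps_cUeps_approx}, I would fix $\bar\alpha_0\in(\alpha_0,1-\tfrac\kappa2)$, test against $g=\varphi_x^{(n)}$ and $g=\varphi_x^{(n)}-\varphi_y^{(n)}$ with $|x-y|\le 2^{-n}$, and use the scaling relation \eqref{eq:scaling_J} together with $J_{\kappa,\delta}(f)\lesssim\diam(\supp f)^{2d+2\delta-\kappa}\|f\|_\infty^2$ to get $\Theta(\varphi_x^{(n)})\lesssim 2^{nb}$ and $\Theta(\varphi_x^{(n)}-\varphi_y^{(n)})\lesssim 2^{n(b+1)}|x-y|$ for some finite $b$; hence $\sup_x\|\langle Y_t^\eps-\nu_\fr\cU_t^\eps,\varphi_x^{(n)}\rangle\|_p\lesssim\eps^{a_0}2^{nb}$, and analogously for the increment.

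Since this is too lossy for large $n$, the next step is a uniform a priori bound $\sup_{\eps\in(0,1]}\sup_{0<t\le T}(\|Y_t^\eps\|_{\C^{\bar\alpha_0}_p}+\|\cU_t^\eps\|_{\C^{\bar\alpha_0}_p})<\infty$. For $\cU^\eps$ this is immediate from the explicit formula \eqref{eq:Ueps}, the BDG inequality \eqref{eq:BDGineq+}, the heat kernel bound \eqref{eq:Gaussian_est} and \eqref{eq:scaling_J}; for $Y^\eps$ it follows by the same BDG argument as in the proof of Proposition~\ref{prop:fluctuations_approx}, since the Clark--Ocone integrand in \eqref{eq:clark-ocone-Y} has $L^p$ norm $\lesssim P_{t/\eps^2-s}(x-y)$ by Lemmas~\ref{lem:moments}--\ref{lem:transform_moments+decorrelation} (the decorrelation estimate \eqref{eq:g_cov_space_estimate} providing an alternative route). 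Interpolating $\min(\eps^{a_0}2^{nb},2^{-n\bar\alpha_0})\le\eps^{a_0\lambda}2^{-n\alpha_0}$ with $\lambda=(\bar\alpha_0-\alpha_0)/(b+\bar\alpha_0)\in(0,1]$, and doing the same for spatial increments, verifies the two conditions of Definition~\ref{dis-class} with constant $\lesssim\eps^a$, $a=a_0\lambda>0$, uniformly in $t\le T$; this is \eqref{eq:Yeps_cUeps_approx}, and Theorem~\ref{thm:critHolder} then gives the stated bound on the seminorms $\|\cdot\|_{\alpha;E}$.

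For the uniform time regularity of $\cU^\eps$ I would estimate $\langle\cU_t^\eps-\cU_{t'}^\eps,\varphi_x^{(n)}\rangle$ directly from \eqref{eq:Ueps}: assuming $t'<t$, split the time integral at $t'/\eps^2$; on the first piece the integrand involves the heat kernel increment $P_{t/\eps^2-s}-P_{t'/\eps^2-s}$, which produces a factor $|t-t'|^{\gamma_0}$ at the cost of $2\gamma_0$ extra derivatives of $P$ (bounded via \eqref{eq:Gaussian_est}), while the second piece over $[t'/\eps^2,t/\eps^2]$ is bounded exactly as the equal-time estimate over a window of length $(t-t')/\eps^2$; after rescaling and \eqref{eq:scaling_J}, together with the matching spatial-increment bound, this yields $\sup_{\eps\in(0,1]}\|\cU^\eps\|_{\C^{\gamma_0,\alpha_0}_{p,T}}<\infty$ for $\alpha_0<1-\tfrac\kappa2-2\gamma_0$. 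The corollary then follows from \eqref{eq:Yeps_cUeps_approx} and Proposition~\ref{prop:time-holder_distribution_process}: the embedding $\C^{\alpha_0}_p\subset L^p(\Omega,\C^\alpha(E))$ holds for $\alpha<\alpha_0-d/p$, and to move $\sup_{0<t\le T}$ inside the $L^p$-norm I would first upgrade \eqref{eq:Yeps_cUeps_approx} to a bound of order $\eps^{a'}$ in some $\C^{\gamma_1,\alpha_0}_{p,T}$ with $\gamma_1>0$, by interpolating it against uniform time-H\"older bounds for $Y^\eps$ and $\cU^\eps$, and then invoke $\C^{\gamma_1,\alpha_0}_{p,T}\subset L^p(\Omega,\C^\gamma([0,T],\C^\alpha(E)))$ (working with a sufficiently large moment exponent if needed).

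The hard part will be the uniform-in-$\eps$ time-H\"older bound for $Y^\eps$ feeding that last interpolation: unlike $\cU^\eps$, the Clark--Ocone integrand for $Y_t^\eps$ depends on the terminal time $t$ through $u(t/\eps^2,x)$, $w_{s,y}(t/\eps^2,x)$ and the upper integration limit, so estimating $Y_t^\eps-Y_{t'}^\eps$ requires re-running the separation-of-timescales and mixing argument of Proposition~\ref{prop:fluctuations_approx} with one endpoint displaced and controlling the difference of the two fundamental solutions through the homogenisation bound of Proposition~\ref{prop:rho_homogenisation}; this is conceptually routine given Lemmas~\ref{lem:moments}--\ref{lem:transform_moments+decorrelation}, but it carries most of the bookkeeping. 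The remaining effort is purely a matter of tracking exponents so that the interpolation parameters ($b<\infty$, and $\bar\alpha_0$ taken close enough to $1-\tfrac\kappa2$) close with a strictly positive final rate $a$.
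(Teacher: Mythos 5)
Your overall strategy coincides with the paper's: reduce everything to the quantitative bounds \eqref{eq:approx_Lp_SHE} and \eqref{eq:approx_Lp_X_Ueps}, test against $\varphi^{(n)}_x$, exploit the scaling of $J_{\kappa,\delta}$, verify Definition~\ref{dis-class}, and conclude via Proposition~\ref{prop:time-holder_distribution_process}. The one substantive divergence is your interpolation step, and its premise --- that the direct bound is ``too lossy for large $n$'' --- is false. By \eqref{eq:scaling_J} one has \emph{exactly} $J_{\kappa,\delta}(g^\lambda_x)^{1/2}=\lambda^{\delta-\f\kappa2}J_{\kappa,\delta}(g)^{1/2}$, so \eqref{eq:approx_Lp_SHE} yields $\|Y^{\eps,g^\lambda_x}_t-\nu_\fr X^{\eps,g^\lambda_x}_t\|_p\lesssim\eps^{a_0}\lambda^{\delta-\f\kappa2}\|g\|_\infty$ uniformly in $\lambda\in(0,1)$; since $\alpha_0<1-\f\kappa2$ one can choose $\delta<1$ with $\delta-\f\kappa2\geq\alpha_0$ (and, for the $X-\cU$ piece coming from \eqref{eq:approx_Lp_X_Ueps}, additionally $p$ large so that $(\delta-\f\kappa2)(1+\f1p)\geq\alpha_0$), whence $2^{n\alpha_0}\|\langle\cdot,\varphi^{(n)}_x\rangle\|_p\lesssim\eps^{a_0}$ uniformly in $n$ with no interpolation and no separate a priori bound on $\|Y^\eps_t\|_{\C^{\bar\alpha_0}_p}$. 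Your detour does close arithmetically, but it degrades the rate $a$ and forces you to prove extra lemmas. Relatedly, the increment condition \eqref{eq:spatial_control} comes for free because the bound holds for \emph{every} test function with constant proportional to $\|g\|_\infty$ (cf.\ \cite[Rem.~4.7]{GHL23_arxiv_preprint}), so there is no need to test against $\varphi^{(n)}_x-\varphi^{(n)}_y$ separately. For the last claim the paper uses the scaling identity $\cU^\eps_t(g^\lambda_x)=\eps^{1-\f\kappa2}\cU_{t/\eps^2}(g^{\lambda/\eps}_{x/\eps})$ and cites \cite[Lem.~4.14]{GHL23_arxiv_preprint} instead of re-deriving the time increments as you do; both are legitimate. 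Finally, you are right that passing $\sup_{t\le T}$ inside the $L^p(\Omega)$-norm is not literally covered by the first embedding of Proposition~\ref{prop:time-holder_distribution_process}; the paper is terse on this point, and your proposed interpolation against uniform time-H\"older bounds is a reasonable repair, but note that you leave the required uniform-in-$\eps$ time-regularity estimate for $Y^\eps$ unproven, so that part of your argument is not yet complete.
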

\begin{proof}
By \eqref{eq:approx_Lp_SHE}, for $\delta_0, \delta_1 \in (0, 1)$, $\delta_2 \in (0, \f \kappa 2 -1)$, and $\mu = \f 1 2 - \f 1 \kappa$, one has
	\begin{equ}
	\sup_{t\in [0,T]}	\| Y_t^{\epsilon, g} -  \nu_{\fr} X_t^{\epsilon, g} \|_p \lesssim 
		\eps^{\frac {1-\delta_0} 2}   J_{\kappa, \delta_0 }( g)^{\f 1 2} + \eps^{\mu} 
		\Big(   J_{\kappa, \delta_1 }( g)^{\f 1 2} +
		J_{\kappa, 1+\delta_2 }(g)^{\f 1 2} \Big).
	\end{equ}
Recall, Lemma~\ref{lem:J_kappa_delta_bound},  $J_{\kappa, \delta}(g) \lesssim M^{2d+2\delta-\kappa} \| g \|^2_\infty$, where $M$ is the diameter of the support of $g$, and the scaling property \eqref{eq:scaling_J}. 

Let $g \in \bar \D$ and $\delta \in (0,1)$. Then  the following holds uniformly in $\lambda \in (0, 1)$:
	\begin{equs}
		\sup_{t>0} \sup_{x\in \R^d}\| Y_t^{\epsilon, g_x^\lambda} -  \nu_{\fr} X_t^{\epsilon, g_x^\lambda} \|_p &\lesssim \eps^{(\f  {1 - \delta} 2 ) \wedge \mu}  \lambda^{\delta - \f \kappa 2 } \big(  J_{\kappa, \delta }(g)^{\f 1 2}  + J_{\kappa, 1+\delta_2 }(g)^{\f 1 2} \big)\\
		&\lesssim  \eps^{(\f  {1 - \delta} 2 ) \wedge \mu}  \lambda^{\delta - \f \kappa 2 }  %(M^{d} + M^{d + \f \kappa 2}) if M\geq 1
		\| g\|_\infty\;.  
	\end{equs}
	Hence, for any $\alpha_0 < 1 - \f \kappa 2$, by picking\footnote{If $\alpha_0 > - \f \kappa 2$, can set $\delta =  \alpha_0 + \f \kappa 2$.} $\delta = \delta(\alpha_0)$ sufficiently close to $1$ and applying this to the test function $\varphi^{(n)}_x$, we have
	\begin{equ}
		\sup_{t \in (0, T] } \sup_{n\geq 1}  \sup_{x \in \R^d}  2^{n\alpha_0}  
		\bigl\|\langle Y_t^{\epsilon} - \nu_{\fr} X_t^{\epsilon}, \varphi^{(n)}_x \rangle\bigr\|_{p} \lesssim  
		\eps^{(\f  {1 - \delta} 2 ) \wedge \mu}\;,
	\end{equ} 
	which implies \eqref{eq:lattice+time_control}. Since the bound holds for any $g \in \bar \D$, \eqref{eq:spatial_control} is also satisfied (cf. \cite[Rem.~4.7]{GHL23_arxiv_preprint}). We obtain
	\begin{equ}
		\sup_{t \in (0, T] } \| Y_t^{\epsilon} - \nu_{\fr} X_t^{\epsilon}\|_{\C^{\alpha_0}_p} \lesssim \eps^{(\f  {1 - \delta} 2 ) \wedge \mu}\;.
	\end{equ} 
	
	%Considering the difference $\tilde g_{x,y}^\lambda = g_x^\lambda -g_y^\lambda$ , one also has: 
	%\begin{equ} [eq:tildeg_est]
	%	\| Y_t^{\epsilon, \tilde g_{x,y}^\lambda} -  \nu_{\fr} X_t^{\epsilon, \tilde g_{x,y}^\lambda} \|_p \lesssim \eps^{(\f  {1 - \delta} 2 ) \wedge \mu}  \lambda^{ \delta - \f \kappa 2 -1} |x-y|\;.
	%\end{equ}
	%Then, choosing $g = \varphi$ and $\lambda =  2^{-n}$, the estimates \eqref{eq:lattice+time_control0}-\eqref{eq:spatial_control0} are satisfied with constants uniform in $n$. Hence, by picking $\delta$ sufficiently close to $1$, we see that for any $\alpha_0 < 1 - \f \kappa 2$, we have $	\|Y_t^{\epsilon} - \nu_{\fr}  X_t^{\epsilon} \|_{\C^{\alpha_0}_p} \lesssim  \eps^{(\f  {1 - \delta} 2 ) \wedge \mu}$ uniformly in $t \in (0, T]$. By \eqref{eq:Lp_Calpha_distance}, taking sufficiently high $p \geq 1$ such that $\\alpha < \alpha_0 - \f d p $,  we conclude that
	%\begin{equ}
	%	\sup_{t \in (0, T]} \norm{	d_{\theta} (Y_t^{\epsilon} , \nu_{\fr}  X_t^{\epsilon} )}_p \lesssim \eps^{(\f  {1 - \delta} 2 ) \wedge \mu} \longrightarrow 0\;.
	%\end{equ} 
	
	It remains to control %$\norm{	d_{\alpha} ( X_t^{\epsilon}, \cU^\eps_t )}_p$.
	$\|  X_t^{\epsilon} - \cU^\eps_t \|_{\C^{\alpha_0}_p} $.
	Following the same steps as above,  
	for any $\theta \in (2, \kappa \wedge (d + 2 - \kappa))$ and $\delta \in ( \f 1 2 (\theta + \kappa - d ), 1 )$, by Proposition~\ref{prop:approx_X_Ueps} and the scaling relation \eqref{eq:scaling_J},  
	we have uniformly in  $\lambda \in (0, 1)$:
	
	\begin{equs}
	\sup_{x\in \R^d}\sup_{t\in (0,T]}	\| X^{\epsilon, g_x^\lambda}_t - \cU^{\epsilon}_t(g_x^\lambda) \|_p &\lesssim \epsilon^{\f {\theta -2} {2p}} T^{\f  {1-\delta} 2 (1 + \f 1 p)}  \lambda^{ (4\delta - \kappa - \theta) \f 1 {2p}} \lambda^{(2\delta -\kappa) \f {p-1}{2p} } \| g\|_\infty \\
		&\lesssim \epsilon^{\f {\theta -2} {2p}}   \lambda^{(\delta - \f \kappa 2)(1 + \f 1 p)} \| g\|_\infty\;, 
		\end{equs}
	where we used the fact that $\theta < \kappa$. 
	
	For any $\alpha_0 < 1 - \f \kappa 2$, picking sufficiently high $p(\alpha_0) \geq 1$ and $\delta(\alpha_0) < 1$, we have  $(\delta - \f \kappa 2)(1 + \f 1 p) > \alpha_0$. 
	Then, by the same reasoning as above, we obtain
	\begin{equ}
		\sup_{t \in (0, T] }\|  X_t^{\epsilon} - \cU^\eps_t \|_{\C^{\alpha_0}_p} \lesssim \epsilon^{\f {\theta -2} {2p}} \;.
	\end{equ}
	Letting $a = \f 1 2 \min( 1 - \delta, 1 - \f 2 \kappa, \f {\theta -2} {p}  )$, by triangle inequality we 
	conclude that \eqref{eq:Yeps_cUeps_approx} holds.
	The final claim follows  by Proposition~\ref{prop:time-holder_distribution_process}, as long as  $\alpha< \alpha_0 - \f d p$. 
	
	Regarding the bound on $\cU_{t}^\eps$, we note that the definition of $\xi^\eps$ implies
	 the scaling relation
	\begin{equ}
	\cU_t^\eps(g_x^\lambda) = \eps^{1-\f\kappa2}\cU_{t/\eps^2}(g_{x/\eps}^{\lambda/\eps})\;,
	\end{equ}
so that the required bound follows immediately from \cite[Lem.~4.14]{GHL23_arxiv_preprint}.
	\end{proof}

%
%\begin{lemma}\label{lem:unif_moments_Calpha}
%	Let $p \geq 1$. For any $\gamma_0 \in (0,\frac 1 2)$ and $\alpha_0 < 1 - \frac\kappa2 - 2\gamma_0$, one has
%	\begin{equ}[eq:Calpha_unif_eps]
%		\sup_{\eps \in (0, 1]} \| \cU^\eps  \|_{\C^{\gamma_0, \alpha_0}_{p, T}}  \lesssim 1\;.
%	\end{equ}
%\end{lemma}

%\begin{remark}
%	Given that the limiting noise $\xi^0$ lives in $\Cpar^{\sigma}(\R^{d+1})$ for any $\sigma < -1 - \f \kappa 2$ but no better, and  $\cU^\eps$ is the solution of the stochastic heat equation \eqref{eq:EW_eqn} driven by $\xi^\epsilon$, 
%	the H\"{o}lder coefficients in the lemma are sharp. Namely, from the regularity of $\xi^\eps$, we may at best improve by two space derivatives, or one time derivative, or a mixture of both. 
%\end{remark}

\subsection{Proof of the main results}
\label{sec:proofMain}

%With the Lemmas above we are now in the position to prove Theorem~\ref{thm:gen_transf-convergence}. We write $L^p([0, T], \C^\alpha(\R^d))$ for the (quasi)-metric space with distance
%\begin{equ}
%	d(\zeta, \eta) = \int_0^T d_\alpha(\zeta_t, \eta_t)^p\, dt\;,
%\end{equ}
%where $d_\alpha$ is the metric on  $\C^\alpha(\R^d)$ defined in \eqref{eq:metric_Calpha}. We see that for any $\gamma >0$, the space $\C^\gamma([0, T], \C^{\alpha}(\R^d) )$ is continuously embedded in $L^p([0, T], \C^\alpha(\R^d))$. 

\begin{lemma}\label{lem:convergenceIndicator}
Consider a  good coupling for the $\xi^\eps$ satisfying Assumption~\ref{assump-cov-KPZ}.
Given a test function $\psi \in \C_c^\infty(\R^{d+1})$ and $t \in \R$, write
$\tilde \psi_t(s,x) = \psi(s,x)\mathbf{1}_{s \ge t}$. Then,
one has
$\lim_{\eps \to 0} \xi^{\eps}(\tilde \psi_t) = \xi^0(\tilde \psi_t)$ in probability.
\end{lemma}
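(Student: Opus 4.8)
The plan is to reduce to the smooth test functions covered by the definition of a good coupling: I would approximate the time-truncation $\mathbf 1_{s\ge t}$ from below by a smooth cutoff and show that the error is negligible uniformly in $\eps$, using that the noise is white in time. Fix a smooth non-decreasing $\chi\colon\R\to[0,1]$ with $\chi\equiv0$ on $(-\infty,-1]$ and $\chi\equiv1$ on $[0,\infty)$, let $K\subset\R^d$ be compact with $\psi(s,x)=0$ whenever $x\notin K$, and for $h\in(0,1]$ set $\chi^h(s)=\chi((s-t)/h)$ and $\psi^h(s,x)=\psi(s,x)\,\chi^h(s)$. Then $\psi^h\in\C_c^\infty(\R^{d+1})$ and $g^h:=\psi^h-\tilde\psi_t$ is supported in $[t-h,t]\times K$ with $\|g^h\|_\infty\le\|\psi\|_\infty$. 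I would then decompose
\begin{equ}
\xi^\eps(\tilde\psi_t)-\xi^0(\tilde\psi_t)=\xi^\eps(g^h)+\bigl(\xi^\eps(\psi^h)-\xi^0(\psi^h)\bigr)+\bigl(\xi^0(\psi^h)-\xi^0(\tilde\psi_t)\bigr)
\end{equ}
and estimate the three contributions separately.

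The crucial estimate is the bound on the first term. As $\xi^\eps(g^h)$ is centred Gaussian with variance $\int_{t-h}^t\int_{\R^{2d}}g^h(s,y_1)g^h(s,y_2)\,\eps^{-\kappa}R((y_1-y_2)/\eps)\,dy\,ds$, using $|R(x)|\lesssim(1+|x|)^{-\kappa}$ — whence $|\eps^{-\kappa}R(z/\eps)|\lesssim|z|^{-\kappa}$ uniformly in $\eps\in(0,1]$ — together with $\kappa<d$ so that $|z|^{-\kappa}$ is locally integrable, one obtains $\E[\xi^\eps(g^h)^2]\lesssim h\,\|\psi\|_\infty^2\,\sup_{y_1\in K}\int_K|y_1-y_2|^{-\kappa}\,dy_2\lesssim h$, with a constant independent of $\eps$. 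The same computation, with $|y_1-y_2|^{-\kappa}$ replacing $\eps^{-\kappa}R((y_1-y_2)/\eps)$, shows $\|\psi^h-\tilde\psi_t\|_{\cH_0}^2\lesssim h$, where $\cH_0$ denotes the reproducing-kernel Hilbert space attached to the covariance \eqref{eq:xi0_covariance}; hence $\{\psi^h\}_{h\in(0,1]}$ is Cauchy in $\cH_0$ as $h\to0$, and since $\E[\xi^0(f)^2]=\|f\|_{\cH_0}^2$ for $f\in\C_c^\infty(\R^{d+1})$ the variables $\xi^0(\psi^h)$ converge in $L^2(\Omega)$. I would \emph{define} $\xi^0(\tilde\psi_t)$ as this limit — the natural extension of $\xi^0$ to $\cH_0\ni\tilde\psi_t$ — so that $\E[(\xi^0(\psi^h)-\xi^0(\tilde\psi_t))^2]\lesssim h$, controlling the third term. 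For the middle term, $\psi^h\in\C_c^\infty(\R^{d+1})$, so by the definition of a good coupling $\xi^\eps(\psi^h)\to\xi^0(\psi^h)$ in probability as $\eps\to0$, for each fixed $h$.

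Assembling the estimates is then a routine $\eps$-$\delta$ argument: given $\eta,\delta>0$, Chebyshev's inequality lets me pick $h$ small enough that $\prob(|\xi^\eps(g^h)|>\eta/3)\le\delta/3$ for all $\eps\in(0,1]$ and $\prob(|\xi^0(\psi^h)-\xi^0(\tilde\psi_t)|>\eta/3)\le\delta/3$; then, with $h$ fixed, I choose $\eps_0$ so that $\prob(|\xi^\eps(\psi^h)-\xi^0(\psi^h)|>\eta/3)\le\delta/3$ for $\eps<\eps_0$, and a union bound yields $\prob(|\xi^\eps(\tilde\psi_t)-\xi^0(\tilde\psi_t)|>\eta)\le\delta$ for $\eps<\eps_0$. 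As $\eta,\delta$ were arbitrary, this is the claimed convergence in probability. I do not expect a genuine obstacle here: the one point needing care is the \emph{uniform in $\eps$} smallness $\E[\xi^\eps(g^h)^2]\lesssim h$ of the thin-slab variance, which is exactly where the white-in-time structure of the noise (producing the linear factor in $h$) and the spatial integrability $\kappa<d$ come in; the rest is bookkeeping.
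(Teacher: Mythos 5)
Your proposal is correct and follows essentially the same route as the paper: mollify the temporal indicator, use the white-in-time structure (Itô isometry) to get a thin-slab variance bound of order $h$ uniformly in $\eps$, invoke the good-coupling definition for the smooth approximant, and assemble by Chebyshev. The only addition is that you make explicit the construction of $\xi^0(\tilde\psi_t)$ as an $L^2$-limit via the Cauchy property in $\cH_0$, a point the paper leaves implicit; this is a welcome clarification rather than a deviation.
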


\begin{proof}
Let $\chi\colon \R \to \R_+$ be a smooth increasing function such that 
$\chi(s) = 0$ for $s \le 0$ and $\chi(s) = 1$ for $s \ge 1$, and set $\chi_\delta(t) = \chi(t/\delta)$.
Setting $\tilde \psi_t^\delta(s,x) = \chi_\delta(s-t)\psi(s,x)$, we then note that, as a consequence of 
Itô's isometry, one can find a constant $C$ (depending on $\psi$) such that
\begin{equ}[e:ItoBound]
\E \xi^\eps(\tilde \psi_t^\delta - \tilde \psi_t)^2 \le C\delta\;,
\end{equ}
uniformly over all $\eps,\delta \le 1$. Since $\tilde \psi_t^\delta$ is a smooth function,
one furthermore has $\xi^\eps(\tilde \psi_t^\delta) \to \xi^0(\tilde \psi_t^\delta)$ in probability
as $\eps \to 0$ for any fixed $\delta$. Combining this with \eqref{e:ItoBound}, the claim follows
at once.
\end{proof}

With the Lemmas above we are now in the position to prove Theorem~\ref{thm:gen_transf-convergence}.  Write $L^p([0, T], \C^\alpha(E))$ for the space with seminorms, \begin{equ}
	\| \zeta \|_{\alpha, E, p, T} := \Big( \int_0^T \| \zeta_t \|_{\alpha, E}^p\, dt \Big)^{\f 1 p}\;,
\end{equ}
where the semi-norms of $\C^\alpha(E) $ is as in Sec. \ref{sec:proof_General_Thm}.
%where $d_\alpha$ is the metric on  $\C^\alpha(\R^d)$ defined in \eqref{eq:metric_Calpha}. 
We see that for any $\gamma >0$, the space $\C^\gamma([0, T], \C^{\alpha}(E) )$ is continuously embedded in $L^p([0, T], \C^\alpha(E))$.

\begin{proof}[Proof of Theorem~\ref{thm:gen_transf-convergence}] 
From Lemma~\ref{lem:unif_Calpha_convergence}, given $\alpha < 1 - \f \kappa 2$, we pick  $p = p(\alpha)$ suffiently high such that, as $\eps \to 0$, 
% we pick  $p = p(\alpha)$ suffiently high such that  $\alpha< \alpha_0 - \f d p$ and by Proposition~\ref{prop:time-holder_distribution_process} we obtain, as $\eps \to 0$, 
\begin{equ}
 \Bigl\| 	\sup_{t \in [0, T]} \| Y_t^\epsilon - \nu_{\fr} \cU_t^\eps \|_{\alpha, E}\Bigr\|_p
%	\norm{ d_\alpha (Y^\epsilon_t ,   \nu_{\fr} \cU^\eps_t)}_p 
	\longrightarrow 0\;. 
\end{equ}
This implies that $Y^\epsilon -  \nu_{\fr} \cU^\eps$ converges in probability to zero in $L^p([0, T], \C^{\alpha}(E))$. 

It therefore remains to show that, for any good coupling, one has 
$\cU^\eps \Rightarrow \cU^0$ in probability in $L^p([0, T], \C^{\alpha}(E))$.
For this, we make use of \cite[Prop.~3.12]{GenDiff} which implies that it suffices to show tightness of the laws of $\cU^\eps$
and convergence in probability of $\cU^\eps(\psi)$ for any test function $\psi \in \C_c^\infty(\R^{d+1})$.

By Lemma~\ref{lem:unif_Calpha_convergence} and Proposition~\ref{prop:time-holder_distribution_process}, we obtain for any $\gamma' \in (0, \f 1 2)$ and any $\alpha' < 1 - \f \kappa 2 - 2\gamma'$, 
\begin{equ}
	\sup_{\eps \in (0, 1]}  \norm{ \| \cU^\eps \|_{\C^{\gamma'}([0, T], \C^{\alpha'}(E) )} }_p \lesssim 1\;.
\end{equ}
Since $\C^{\gamma'}([0, T], \C^{\alpha'}(E) )$ is compactly embedded in $\C^{\gamma}([0, T], \C^{\alpha}(E) )$ for any $\gamma \in (0, \gamma')$ and $\alpha < \alpha'$, we conclude that the required tightness holds.

To get convergence of $\cU^\eps(\psi)$, we note that $ \cU^\eps(\psi) = \xi^\epsilon(P\psi)$ with
\begin{equ}
(P\psi)(s,y) = \beta \mathbf{1}_{s \ge 0}\int_0^\infty \int_{\R^d} \psi(t,x) P_{t - s}(x - y) \,dx\,dt\;,
\end{equ}
so that the desired convergence follows from Lemma~\ref{lem:convergenceIndicator}.
\end{proof}

We now have all the ingredients in place to prove the main result of this article, namely
the identification of the large-scale limit of the KPZ equation given in Theorem~\ref{thm:main-theorem-KPZ}.

\begin{proof}[Proof of Theorem~\ref{thm:main-theorem-KPZ}]
	Given the initial condition $h_{0}$, let $${\psi_\epsilon(y) = \exp(\epsilon^{\frac \kappa 2 -1} h_{0}(y))  -1}$$ and denote by 
$w^\epsilon_{0,z}(t,x) = \eps^{-d} w_{0,z/\eps}( t /\epsilon^2,x/\epsilon) $ the rescaled fundamental solution of the linear stochastic heat equation. Since the initial condition for $u_\epsilon$ is $1$, it follows from the Cole--Hopf transform that
	\begin{equ}
		h_\eps(t,x) = \eps^{1-\f\kappa2} \log \Bigl( u_\eps (t,x) + \int_{\R^d} w^\epsilon_{0,y}(t,x) \psi_\epsilon(y) \, dy \Bigr)\;.
	\end{equ}
	Since $2\abs{\log(x+y) - \log(x) - y/x} \le (y/x)^2$ for $x,y > 0$, it follows that
	\begin{equs}
	\Bigl| h_\eps(t,x) - \eps^{1-\f \kappa 2}& \log  \big( u_\eps (t,x) \big) - \eps^{1-\f\kappa2} (u_\eps (t,x))^{-1} \int_{\R^d} w^\epsilon_{0,y}(t,x) \psi_\epsilon(y) \, dy  \Bigr| \\
	  &	\le  \f 12 \eps^{1-\f\kappa2} \Big( (u_\eps (t,x))^{-1} \int_{\R^d} w^\epsilon_{0,y}(t,x) \psi_\epsilon(y) \, dy \Big)^2\;.
	\end{equs}
	By the a priori bounds for $u_\eps (t,x))^{-1}$ from Lemma~\ref{lem:moments} and (\ref{e:bound_w}) and since
	$\|\psi_\eps\|_\infty \lesssim \eps^{\f\kappa2-1}$, we obtain for any $p \geq 1$ 
	\begin{equ}
		\Bigl\| (u_\eps (t,x))^{-1} \int_{\R^d} w^\epsilon_{0,y}(t,x)\psi_\eps(y)\, dy \Bigr\|_p \lesssim \eps^{\f\kappa2-1}\int_{\R^d} P_{t} (x-y)\, dy = \eps^{\f\kappa2-1}\;.
	\end{equ}
	On the other hand, by Taylor approximating $\psi_\epsilon(y)$, we have
	$		\| \psi_\epsilon(y) - \epsilon^{\frac \kappa 2 -1}  h_{0} \|_\infty  
	\lesssim \epsilon^{\kappa -2}
	$ and consequently,
	$$|\eps^{1-\f\kappa2}  (u_\eps (t,x))^{-1} \Bigl(\int_{\R^d} w^\epsilon_{0,y}(t,x) \psi_\epsilon(y) \, dy -\int_{\R^d} w^\epsilon_{0,y}(t,x) h_0(y) \, dy\Bigr) |\lesssim \epsilon^{\f \kappa 2-1},$$
	 Combining these bounds, we conclude that 
	\begin{equ}
		\Big\| h_\eps(t,x) - \eps^{1-\f \kappa 2} \log  \big( u_\eps (t,x) \big) - (u_\eps (t,x))^{-1} \int_{\R^d} w^\epsilon_{0,y}(t,x) h_0(y) \, dy  \Bigr\|_p 
		\lesssim  \eps^{\f\kappa2-1}\;.
	\end{equ}
	%As a consequence of Theorems~\ref{theo:homogenisation} and~\ref{thm:stat_field}, we see that
	%%	\luca{Can use $\cev Z_{-t/\epsilon^2} (0, y/\epsilon)$ in place of $\cev Z (0, y/\epsilon)$. }
	%	\begin{equs}
		%		(u_\eps (t,x))^{-1} w^\epsilon_{0,y}(t,x) &\approx \frac {\vec Z(t/\epsilon^2, x/\epsilon)}{u_\eps (t,x)} \cev Z (0, y/\epsilon) P_{t} (x-y)\\
		%		& \approx \cev Z (0, y/\epsilon) P_{t} (x-y)\;. 
		%	\end{equs}
	
	As a consequence of Theorem~\ref{theo:homogenisation}, with $ \mu = \f 1 2 - \f 1 \kappa$, for any $p\geq1$ we have 
	%	\luca{Can use $\cev Z_{-t/\epsilon^2} (0, y/\epsilon)$ in place of $\cev Z (0, y/\epsilon)$. }
	\begin{equs}
		&\|	w^\epsilon_{0,y}(t,x) -  \vec Z(t/\epsilon^2, x/\epsilon) \cev Z (0, y/\epsilon) \eps^{-d}P_{t/\eps^2} (\tfrac {x-y}\eps)\|_p \\
		& \qquad  \lesssim (t/\eps^2)^{-\mu} (1 + (t/\eps^2)^{-\f 1 2} \eps^{-1}|x-y|) \eps^{-d}P_{t/\eps^2} (\tfrac {x-y}\eps)  \\ &\qquad \lesssim \eps^{2\mu} t^{-\mu}  (1 + t^{-\f 1 2} |x-y| ) P_{t}(x-y)\;,
	\end{equs}
	where we used the fact that $\eps^{-d}P_{t/\eps^2} (\tfrac x\eps) = P_t(x)$. 
	
	Given that $h_0$ is bounded and the uniform control of negative moments of $u_\eps$ from Lemma~\ref{lem:moments}\ref{i}, we obtain
	\begin{equs}
		&	\Big\| \int_{\R^d}  (u_\eps (t,x))^{-1} \Big( w^\epsilon_{0,y}(t,x) - \vec Z(t/\epsilon^2, x/\epsilon) \cev Z (0, y/\epsilon) \eps^{-d}P_{t/\eps^2} (\tfrac {x-y}\eps) \Big) h_0(y) \, dy   \Big\|_p \\
		& \qquad \qquad \lesssim  \eps^{2\mu} t^{-\mu} \,  \| u^{-1}_\eps (t,x) \|_{2p} 	\int_{ \R^{d}} (1 + t^{-\f 1 2} |x-y| ) P_{t}(x-y) \, dy \lesssim \eps^{2\mu} t^{-\mu} \;.
	\end{equs}
	On the other hand, by Theorem~\ref{thm:stat_field} we also have 
	\begin{equs}
		\Big\| \frac {\vec Z(t/\epsilon^2, x/\epsilon)}{u_\eps (t,x)} - 1  \Big\|_p \lesssim \| u^{-1}_\eps (t,x) \|_{2p} \| \vec Z(t/\epsilon^2, x/\epsilon) - u_\eps (t,x) \|_{2p} \lesssim t^{\f {2-\kappa} 4} \eps^{\f \kappa 2 -1}\;. 
	\end{equs} 
	Combining the estimates, we obtain
	\begin{equ}
		\Big\| \int_{\R^d} \Big(\frac { w^\epsilon_{0,y}(t,x)} { u_\eps (t,x)} - \cev Z (0, y/\epsilon) P_{t} (x-y) \Big) h_0(y) \, dy   \Big\|_p \lesssim  \eps^{\f \kappa 2 -1} t^{\f {2-\kappa} 4} + \eps^{2\mu} t^{-\mu}\;.
	\end{equ}
	Hence, 
	noting that $2\mu < \frac {\kappa } 2 -1$  and 
    using the fact that $t \geq T_0 > 0$, we have
	\begin{equ}[eq:R1eps_bound]
		\Big\|	h_\eps(t,x) - \eps^{1-\f \kappa 2} \log  \big( u_\eps (t,x) \big) - \int_{\R^d} \cev Z (0, y/\epsilon) P_{t} (x-y) h_{0}(y) \, dy \Big\|_p  \lesssim \eps^{2\mu}\;.
	\end{equ}
	
	Let $c = \E [\log \vec Z(t,x)]$, $F(t) = \E [ \log  \big( u (t,x) \big)  ]$ and set $F_\eps(t) = F(t/\eps^2)$. Then, we decompose $h_\epsilon$ as follows:
	\begin{equ}[eq:decomp_heps]
		h_\eps(t,x) - \eps^{1-\f \kappa 2} c =  u_\eps^{\log}(t,x) + \int_{\R^d} P_{t} (x-y) h_{0}(y) \, dy + R_{0, \epsilon}(t) + \sum_{i = 1}^{2} R_{i, \epsilon}(t, x)\;,
	\end{equ}
	where
	\begin{equs}
		u_\eps^{\log}(t,x) &= \eps^{1-\f \kappa 2} ( \log  \big( u_\eps (t,x) \big) - \E [\log  \big( u_\eps (t,x) \big)])\;,\quad R_{0, \epsilon}(t) =  \eps^{1-\f \kappa 2} ( F_\eps(t) - c )\;,\\
%		R_{0, \epsilon}(t) \,\, &=  \eps^{1-\f \kappa 2} ( F_\eps(t) - c )\;, \\
		R_{1, \epsilon}(t, x) &= h_\eps(t,x) - \eps^{1-\f \kappa 2} \log  \big( u_\eps (t,x) \big) - \int_{\R^d} \cev Z (0, y/\epsilon) P_{t} (x-y) h_{0}(y) \, dy\;,\\ 
		R_{2, \epsilon} (t,x) &= \int_{\R^d} (\cev Z (0, y/\epsilon) - 1) P_{t} (x-y) h_{0}(y) \, dy \;.
	\end{equs}
	By Theorem~\ref{thm:gen_transf-convergence} with $\fr = \log$, we know that $\nu_{\fr} =  1$ and 
%	$(u_\eps^{\log}, \xi^\epsilon)$ converge in law to $(\cU^0, \xi^0)$ in $L^p([0, T], \C^\alpha(\R^d)) \times \Cpar^\sigma(\R^{d+1})$, 
	$u_\eps^{\log}$ converge in probability to $\cU^0$ in $L^p([0, T], \C^\alpha(E))$ for a good coupling of $\xi^\eps$,
	where $\cU^0$ is the solution of the stochastic heat equation \eqref{eq:EW_eqn} with zero initial conditions. Hence, the first two terms in \eqref{eq:decomp_heps} converge in law to the claimed limit $\cU$, the solution of \eqref{eq:AddSHE_h0} with initial conditions $h_0$. It remains to show  that the remainder terms $R_{i, \eps}$ vanish in $L^p([T_0, T], \C^\alpha(E))$.  
	
	Firstly, we show that for $t>0$:
	\begin{equ}
		\abs{F(t) - c}\lesssim 1\wedge |t|^{(2-\kappa)/2}\;.
	\end{equ}
	Then, this implies $|R_{0, \eps}(t)| \lesssim \epsilon^{\f \kappa 2 -1} t^{1 - \f \kappa 2}$, which vanishes uniformly in $t \geq T_0$ as $\eps \to 0$. 
	Recall that 
	\begin{equ}
		F(t)-c = \E\bigl( \log u(t,x)- \log \vec Z(t,x)\bigr)\;.
	\end{equ}
	It follows that, by splitting into the $u<Z$ and $u\ge Z$ case, 
	\begin{equ}
		\Bigl|F(t)-c - \E \Bigl(\frac{u(t,x)- \vec Z(t,x)}{u(t,x)}\Bigr)\Bigr|
		\lesssim \E \Bigl(\frac{u(t,x)- \vec Z(t,x)}{u(t,x) \wedge \vec Z(t,x)}\Bigr)^2\;,
	\end{equ}
	which in turn is bounded by $1\wedge |t|^{(2-\kappa)/2}$ by combining Theorem~\ref{thm:stat_field} and
	Lemma~\ref{lem:moments}\ref{i}.
	The trick now is to write
	\begin{equ}
		\frac{u(t,x)- \vec Z(t,x)}{u(t,x)} = \int \frac{1- \vec Z(0,x)}{u(t,x)}w_{0,y}(t,x)\,dy
	\end{equ}
	and to note that $\vec Z(0,x)$ is independent of both $u(t,x)$ and $w_{0,y}(t,x)$, so that 
	the expectation of this expression vanishes, and concluding the claim on $|F(t)-c |$.
	
	From \eqref{eq:R1eps_bound}, we see that $\|R_{1, \eps} (t,x) \|_p \lesssim \eps ^{2\mu}$ uniformly in $x$ and $t \geq T_0$. Hence, given that the bounds are uniform in space, integrating against any spatial test function $g \in \bar \D$ implies that $R_{0, \eps}$ and 	$R_{1, \eps}$ converge to zero in $L^p([T_0, T], \C^\alpha(E))$. 
	
	It remains to estimate $R_{2, \eps}$. We claim, for any $\delta \in (0, 1)$, uniformly in $\lambda \in (0, 1)$,
	\begin{equ} \label{eq:est_LLN_cevZ}
\biggl\|	\int_{\R^d} R_{2, \eps}(t, x) g_z^\lambda(x) \, dx \biggr\|_2 \lesssim \eps^{\f \kappa 2 - 1} t^{-\f \delta 2} \big( J_{\kappa -2, \delta} (g_z^\lambda )\big)^{\f 12} \;.
	\end{equ}
	By Lemma~\ref{lem:J_kappa_delta_bound}, for any $g \in \bar \D$ we obtain
	\begin{equ}
	 \sup_{t \geq T_0} \biggl\|	\int_{\R^d} R_{2, \eps}(t, x) g_z^\lambda(x) \, dx \biggr\|_2	\lesssim \eps^{\f \kappa 2 - 1} \lambda^{\delta + 1 - \f \kappa 2} \lesssim \eps^{\f \kappa 2 - 1} \lambda^{ 1 - \f \kappa 2} \;.
	\end{equ}
	As in the proof of Lemma~\ref{lem:unif_Calpha_convergence}, this implies that $ \| R_{2, \eps}(t, \cdot) \|_{\C_2^{\alpha_0}} \lesssim \eps^{\f \kappa 2 - 1}$ for any $\alpha_0 \leq  1 - \f \kappa 2$, uniformly in $t \geq T_0$. On the other hand, by the uniform moments bounds, we know
	\begin{equ}
		\sup_{t \geq T_0} \biggl\| \int_{\R^d} R_{2, \eps}(t, x) g_z^\lambda(x) \, dx \biggr\|_p  \lesssim \sup_{t, x} \| R_{2, \eps}(t, x)\|_p \lesssim 1\;.
	\end{equ}
	Then, given $\alpha < 1 - \frac \kappa 2$, picking $p \geq 1$ sufficiently large such that $\alpha < \alpha_0 - \f d p$, by Proposition~\ref{prop:time-holder_distribution_process} and interpolation we conclude that $\sup_{t \geq T_0} \big\| \|  R_{2, \eps}(t,\cdot) \|_{\alpha, E}  \big\|_p \to 0$ as $\eps \to 0$ for $p\in[2,\infty]$. Therefore,  $R_{2, \eps}$ vanishes in $L^p([T_0, T], \C^\alpha(E))$.
	
	It remains to show \eqref{eq:est_LLN_cevZ}, namely that $\cev Z(0, y/\eps )$ satisfies a law of large numbers. We know that $\cev Z$ has constant expectation $1$ and decorrelates in space  by \eqref{eq:g_cov_space_estimate}:  %its spatial correlations vanish, by \eqref{eq:g_cov_space_estimate}:
	\begin{equ}
	 \Lambda(0, y/\epsilon) :=	\cov ( \cev Z (0, 0), \cev Z (0, y/\epsilon) ) \lesssim 1\wedge \eps^{\kappa-2} |y|^{2-\kappa}\;. 
	\end{equ}
	Then, we compute the second moments and use \eqref{eq:Gaussian_est} with $\delta \in (0, 1)$, 
	\begin{equs}
		\E\Big[ \Big( 	\int_{\R^d}&R_{2, \eps}(t, x) g_z^\lambda(x) \, dx\Big)^2 \, \Big] \\
		& = 
		\int_{ \R^{4d}} \Lambda(y_1/\epsilon, y_2/\epsilon) \prod_{i = 1}^2 \big(P_t(x_i - y_i) h_0(y_i) g_z^\lambda(x_i) \big)\, dx\, dy \\
		& \lesssim \eps^{\kappa-2} t^{-\delta} \int_{ \R^{4d}} |y_1 - y_2|^{2-\kappa}  \prod_{i = 1}^2 \big( |x_i - y_i |^{\delta -d} |g_z^\lambda(x_i)| \big)\, dx\, dy\;, 
	\end{equs}
	which implies \eqref{eq:est_LLN_cevZ}, given definition \eqref{eq:defn_J_Gamma0} of $J_{\kappa -2, \delta} $. This concludes the proof of \Cref{thm:main-theorem-KPZ}. 
\end{proof}

\bibliography{references} 

\begin{thebibliography}{KCDW14}
\def\myhref#1#2{\href{#2}{\nolinkurl{#1}}}

\bibitem[ACQ11]{amir_corwin_quastel_2011probability}
\textsc{G.~Amir}, \textsc{I.~Corwin}, and \textsc{J.~Quastel}.
\newblock Probability distribution of the free energy of the continuum directed
  random polymer in {$1+1$} dimensions.
\newblock \emph{Comm. Pure Appl. Math.} \textbf{64}, no.~4, (2011), 466--537.
\newblock \myhref{doi:10.1002/cpa.20347}{https://dx.doi.org/10.1002/cpa.20347}.

\bibitem[AKQ14]{alberts_khanin_quastel2014intermediate}
\textsc{T.~Alberts}, \textsc{K.~Khanin}, and \textsc{J.~Quastel}.
\newblock The intermediate disorder regime for directed polymers in dimension
  {$1+1$}.
\newblock \emph{Ann. Probab.} \textbf{42}, no.~3, (2014), 1212--1256.
\newblock \myhref{doi:10.1214/13-AOP858}{https://dx.doi.org/10.1214/13-AOP858}.

\bibitem[BDJ99]{baik1999distribution}
\textsc{J.~Baik}, \textsc{P.~Deift}, and \textsc{K.~Johansson}.
\newblock On the distribution of the length of the longest increasing
  subsequence of random permutations.
\newblock \emph{J. Amer. Math. Soc.} \textbf{12}, no.~4, (1999), 1119--1178.
\newblock
  \myhref{doi:10.1090/S0894-0347-99-00307-0}{https://dx.doi.org/10.1090/S0894-0347-99-00307-0}.

\bibitem[BG97]{Bertini-Giacomin-97}
\textsc{L.~Bertini} and \textsc{G.~Giacomin}.
\newblock Stochastic {B}urgers and {KPZ} equations from particle systems.
\newblock \emph{Comm. Math. Phys.} \textbf{183}, no.~3, (1997), 571--607.
\newblock
  \myhref{doi:10.1007/s002200050044}{https://dx.doi.org/10.1007/s002200050044}.

\bibitem[BQS11]{Balazs-Quastel-Seppalainen2011}
\textsc{M.~Bal\'{a}zs}, \textsc{J.~Quastel}, and
  \textsc{T.~Sepp\"{a}l\"{a}inen}.
\newblock Fluctuation exponent of the {KPZ} / stochastic {B}urgers equation.
\newblock \emph{J. Amer. Math. Soc.} \textbf{24}, no.~3, (2011), 683--708.
\newblock
  \myhref{doi:10.1090/S0894-0347-2011-00692-9}{https://dx.doi.org/10.1090/S0894-0347-2011-00692-9}.

\bibitem[CCM19]{cosco2019spacetime_fluct}
\textsc{F.~Comets}, \textsc{C.~Cosco}, and \textsc{C.~Mukherjee}.
\newblock Space-time fluctuation of the {K}ardar--{P}arisi--{Z}hang equation in
  {$d\geq 3$} and the {G}aussian {F}ree {F}ield.
\newblock \emph{arXiv preprints} (2019).
\newblock \myhref{arXiv:1905.03200}{https://arxiv.org/abs/1905.03200}.

\bibitem[CCM20]{cosco2020renormalizing}
\textsc{F.~Comets}, \textsc{C.~Cosco}, and \textsc{C.~Mukherjee}.
\newblock Renormalizing the {K}ardar-{P}arisi-{Z}hang equation in {$d\geq3$} in
  weak disorder.
\newblock \emph{J. Stat. Phys.} \textbf{179}, no.~3, (2020), 713--728.
\newblock \myhref{arXiv:1902.04104}{https://arxiv.org/abs/1902.04104}.
\newblock
  \myhref{doi:10.1007/s10955-020-02539-7}{https://dx.doi.org/10.1007/s10955-020-02539-7}.

\bibitem[CNN22]{cosco2020law}
\textsc{C.~Cosco}, \textsc{S.~Nakajima}, and \textsc{M.~Nakashima}.
\newblock Law of large numbers and fluctuations in the sub-critical and {$L^2$}
  regions for {SHE} and {KPZ} equation in dimension {$d\geq3$}.
\newblock \emph{Stochastic Process. Appl.} \textbf{151}, (2022), 127--173.
\newblock
  \myhref{doi:10.1016/j.spa.2022.05.010}{https://dx.doi.org/10.1016/j.spa.2022.05.010}.

\bibitem[CZ20]{Cara-Zam}
\textsc{F.~Caravenna} and \textsc{L.~Zambotti}.
\newblock Hairer's reconstruction theorem without regularity structures.
\newblock \emph{EMS Surv. Math. Sci.} \textbf{7}, no.~2, (2020), 207--251.
\newblock \myhref{doi:10.4171/emss/39}{https://dx.doi.org/10.4171/emss/39}.

\bibitem[DGRZ20]{dunlap2020_KPZ_fluctuations}
\textsc{A.~Dunlap}, \textsc{Y.~Gu}, \textsc{L.~Ryzhik}, and
  \textsc{O.~Zeitouni}.
\newblock Fluctuations of the solutions to the {KPZ} equation in dimensions
  three and higher.
\newblock \emph{Probab. Theory Related Fields} \textbf{176}, no. 3-4, (2020),
  1217--1258.
\newblock
  \myhref{doi:10.1007/s00440-019-00938-w}{https://dx.doi.org/10.1007/s00440-019-00938-w}.

\bibitem[DGRZ21]{dunlap2018homo}
\textsc{A.~Dunlap}, \textsc{Y.~Gu}, \textsc{L.~Ryzhik}, and
  \textsc{O.~Zeitouni}.
\newblock The random heat equation in dimensions three and higher: the
  homogenization viewpoint.
\newblock \emph{Arch. Ration. Mech. Anal.} \textbf{242}, no.~2, (2021),
  827--873.
\newblock
  \myhref{doi:10.1007/s00205-021-01694-9}{https://dx.doi.org/10.1007/s00205-021-01694-9}.

\bibitem[GHL23]{GHL23_arxiv_preprint}
\textsc{L.~Gerolla}, \textsc{M.~Hairer}, and \textsc{X.-M. Li}.
\newblock {Fluctuations of stochastic PDEs with long-range correlations}.
\newblock \emph{arXiv preprint} (2023).
\newblock \myhref{arXiv:2303.09811}{https://arxiv.org/abs/2303.09811}.

\bibitem[GIP15]{gubinelli2015paracontrolled}
\textsc{M.~Gubinelli}, \textsc{P.~Imkeller}, and \textsc{N.~Perkowski}.
\newblock Paracontrolled distributions and singular {PDE}s.
\newblock \emph{Forum Math. Pi} \textbf{3}, (2015), e6, 75.
\newblock
  \myhref{doi:10.1017/fmp.2015.2}{https://dx.doi.org/10.1017/fmp.2015.2}.

\bibitem[GL20]{gu20_nlmSHE}
\textsc{Y.~Gu} and \textsc{J.~Li}.
\newblock Fluctuations of a nonlinear stochastic heat equation in dimensions
  three and higher.
\newblock \emph{SIAM J. Math. Anal.} \textbf{52}, no.~6, (2020), 5422--5440.
\newblock
  \myhref{doi:10.1137/19M1296380}{https://dx.doi.org/10.1137/19M1296380}.

\bibitem[GP17]{gubinelli2017kpz}
\textsc{M.~Gubinelli} and \textsc{N.~Perkowski}.
\newblock K{PZ} reloaded.
\newblock \emph{Comm. Math. Phys.} \textbf{349}, no.~1, (2017), 165--269.
\newblock
  \myhref{doi:10.1007/s00220-016-2788-3}{https://dx.doi.org/10.1007/s00220-016-2788-3}.

\bibitem[GRZ18]{gu18edwards}
\textsc{Y.~Gu}, \textsc{L.~Ryzhik}, and \textsc{O.~Zeitouni}.
\newblock The {E}dwards-{W}ilkinson limit of the random heat equation in
  dimensions three and higher.
\newblock \emph{Comm. Math. Phys.} \textbf{363}, no.~2, (2018), 351--388.
\newblock
  \myhref{doi:10.1007/s00220-018-3202-0}{https://dx.doi.org/10.1007/s00220-018-3202-0}.

\bibitem[Hai13]{hairer_solvingKPZ}
\textsc{M.~Hairer}.
\newblock Solving the {KPZ} equation.
\newblock \emph{Ann. of Math. (2)} \textbf{178}, no.~2, (2013), 559--664.
\newblock
  \myhref{doi:10.4007/annals.2013.178.2.4}{https://dx.doi.org/10.4007/annals.2013.178.2.4}.

\bibitem[Hai14]{hairer_RegStr}
\textsc{M.~Hairer}.
\newblock A theory of regularity structures.
\newblock \emph{Invent. Math.} \textbf{198}, no.~2, (2014), 269--504.
\newblock
  \myhref{doi:10.1007/s00222-014-0505-4}{https://dx.doi.org/10.1007/s00222-014-0505-4}.

\bibitem[HL18]{Cyril}
\textsc{M.~Hairer} and \textsc{C.~Labb\'e}.
\newblock Multiplicative stochastic heat equations on the whole space.
\newblock \emph{J. Eur. Math. Soc. (JEMS)} \textbf{20}, no.~4, (2018),
  1005--1054.
\newblock \myhref{arXiv:1504.07162}{https://arxiv.org/abs/1504.07162}.
\newblock \myhref{doi:10.4171/JEMS/781}{https://dx.doi.org/10.4171/JEMS/781}.

\bibitem[HL22]{GenDiff}
\textsc{M.~Hairer} and \textsc{X.-M. Li}.
\newblock Generating diffusions with fractional {B}rownian motion.
\newblock \emph{Comm. Math. Phys.} \textbf{396}, no.~1, (2022), 91--141.
\newblock \myhref{arXiv:2109.06948}{https://arxiv.org/abs/2109.06948}.
\newblock
  \myhref{doi:10.1007/s00220-022-04462-2}{https://dx.doi.org/10.1007/s00220-022-04462-2}.

\bibitem[HS23]{Rhys}
\textsc{M.~Hairer} and \textsc{R.~Steele}.
\newblock The {BPHZ} theorem for regularity structures via the spectral gap
  inequality.
\newblock \emph{arXiv preprint} (2023).
\newblock \myhref{arXiv:2301.10081}{https://arxiv.org/abs/2301.10081}.

\bibitem[KCDW14]{PhyKPZ14}
\textsc{T.~Kloss}, \textsc{L.~Canet}, \textsc{B.~Delamotte}, and
  \textsc{N.~Wschebor}.
\newblock {K}ardar--{P}arisi--{Z}hang equation with spatially correlated noise:
  A unified picture from nonperturbative renormalization group.
\newblock \emph{Phys. Rev. E} \textbf{89}, no.~2(2014).
\newblock
  \myhref{doi:10.1103/PhysRevE.89.022108}{https://dx.doi.org/10.1103/PhysRevE.89.022108}.

\bibitem[KPZ86]{kardar1986dynamic}
\textsc{M.~Kardar}, \textsc{G.~Parisi}, and \textsc{Y.-C. Zhang}.
\newblock Dynamic scaling of growing interfaces.
\newblock \emph{Phys. Rev. Lett.} \textbf{56}, (1986), 889--892.
\newblock
  \myhref{doi:10.1103/PhysRevLett.56.889}{https://dx.doi.org/10.1103/PhysRevLett.56.889}.

\bibitem[KS99]{KatzavSchwartz99}
\textsc{E.~Katzav} and \textsc{M.~Schwartz}.
\newblock Self-consistent expansion for the {K}ardar--{P}arisi--{Z}hang
  equation with correlated noise.
\newblock \emph{Phys. Rev. E} \textbf{60}, (1999), 5677--5680.
\newblock
  \myhref{doi:10.1103/PhysRevE.60.5677}{https://dx.doi.org/10.1103/PhysRevE.60.5677}.

\bibitem[Kup16]{kupiainen2016renormalization}
\textsc{A.~Kupiainen}.
\newblock Renormalization group and stochastic {PDE}s.
\newblock \emph{Ann. Henri Poincar\'{e}} \textbf{17}, no.~3, (2016), 497--535.
\newblock
  \myhref{doi:10.1007/s00023-015-0408-y}{https://dx.doi.org/10.1007/s00023-015-0408-y}.

\bibitem[LZ22]{lygkonis2020edwards}
\textsc{D.~Lygkonis} and \textsc{N.~Zygouras}.
\newblock Edwards-{W}ilkinson fluctuations for the directed polymer in the full
  {$L^2$}-regime for dimensions {$d\geq3$}.
\newblock \emph{Ann. Inst. Henri Poincar\'{e} Probab. Stat.} \textbf{58},
  no.~1, (2022), 65--104.
\newblock
  \myhref{doi:10.1214/21-aihp1173}{https://dx.doi.org/10.1214/21-aihp1173}.

\bibitem[MQR21]{matetski_quastel_remenik2016-21kpz}
\textsc{K.~Matetski}, \textsc{J.~Quastel}, and \textsc{D.~Remenik}.
\newblock The {KPZ} fixed point.
\newblock \emph{Acta Math.} \textbf{227}, no.~1, (2021), 115--203.
\newblock
  \myhref{doi:10.4310/acta.2021.v227.n1.a3}{https://dx.doi.org/10.4310/acta.2021.v227.n1.a3}.

\bibitem[MSZ16]{MSZ16_weak_strong_mshe}
\textsc{C.~Mukherjee}, \textsc{A.~Shamov}, and \textsc{O.~Zeitouni}.
\newblock Weak and strong disorder for the stochastic heat equation and
  continuous directed polymers in {$d\geq 3$}.
\newblock \emph{Electron. Commun. Probab.} \textbf{21}, (2016), Paper No. 61,
  12.
\newblock \myhref{doi:10.1214/16-ECP18}{https://dx.doi.org/10.1214/16-ECP18}.

\bibitem[MU18]{magnen2018scaling}
\textsc{J.~Magnen} and \textsc{J.~Unterberger}.
\newblock The scaling limit of the {KPZ} equation in space dimension 3 and
  higher.
\newblock \emph{J. Stat. Phys.} \textbf{171}, no.~4, (2018), 543--598.
\newblock
  \myhref{doi:10.1007/s10955-018-2014-0}{https://dx.doi.org/10.1007/s10955-018-2014-0}.

\bibitem[Nua06]{nualart2006malliavin}
\textsc{D.~Nualart}.
\newblock \emph{The {M}alliavin calculus and related topics}.
\newblock Probability and its Applications (New York). Springer-Verlag, Berlin,
  second ed., 2006,  xiv+382.
\newblock
  \myhref{doi:10.1007/3-540-28329-3}{https://dx.doi.org/10.1007/3-540-28329-3}.

\bibitem[PZ00]{peszat2000nonlinear}
\textsc{S.~Peszat} and \textsc{J.~Zabczyk}.
\newblock Nonlinear stochastic wave and heat equations.
\newblock \emph{Probab. Theory Related Fields} \textbf{116}, no.~3, (2000),
  421--443.
\newblock
  \myhref{doi:10.1007/s004400050257}{https://dx.doi.org/10.1007/s004400050257}.

\bibitem[QS23]{quastel2020convergence}
\textsc{J.~Quastel} and \textsc{S.~Sarkar}.
\newblock Convergence of exclusion processes and the {KPZ} equation to the
  {KPZ} fixed point.
\newblock \emph{J. Amer. Math. Soc.} \textbf{36}, no.~1, (2023), 251--289.
\newblock \myhref{doi:10.1090/jams/999}{https://dx.doi.org/10.1090/jams/999}.

\bibitem[Vir20]{virag2020heat}
\textsc{B.~Vir{\'{a}}g}.
\newblock The heat and the landscape {I}.
\newblock \emph{arXiv preprint} (2020).
\newblock \myhref{arXiv:2008.07241}{https://arxiv.org/abs/2008.07241}.

\end{thebibliography}
\bibliographystyle{Martin}

\end{document}